\newcommand*{\const}{\varrho}
\newcommand*{\var}{\sigma}
\title[The steady states of strong-KPP reactions in general domains]{The steady states of strong-KPP reactions\\ in general domains}
\author{Henri Berestycki}
\address{HB: Centre d'analyse et de math\'{e}matique sociales, EHESS-CNRS, 54 Boulevard Raspail, 75006, Paris, France}
\address{\hspace{1.8em}Senior Visiting Fellow, Institute for Advanced Study, Hong Kong University of Science and Technology}
\email{\tt hb@ehess.fr}
\author{Cole Graham}
\address{CG: Division of Applied Mathematics, Brown University, 182 George St, Providence, RI 02906, USA}
\email{\tt cole\_graham@brown.edu}
\begin{document}
\begin{abstract}
  We study the uniqueness of steady states of strong-KPP reaction--diffusion equations in general domains under various boundary conditions.
  We show that positive bounded steady states are unique provided the domain satisfies a certain spectral nondegeneracy condition.
  We also formulate a number of open problems and conjectures.
\end{abstract}

\maketitle

\section{Overview and main results}

We study the steady states of KPP-type reaction--diffusion equations in general domains in $\R^d$.
In particular, we consider the existence and uniqueness of positive bounded solutions to semilinear elliptic equations of the form
\begin{equation}
  \label{eq:main-Dirichlet}
  \begin{cases}
    -\Delta u = f(u) & \text{in } \Omega,\\
    u = 0 & \text{on } \partial \Omega
  \end{cases}
\end{equation}
in nonempty connected open domains $\Omega \subset \R^d$, which need not be bounded.
We also consider several other types of boundary conditions.
We state our assumptions on $f$ and $\Omega$ below.

\subsection*{Motivation.}
Reaction--diffusion equations describe a host of phenomena in diverse fields including chemistry, ecology, and sociology.
For instance, a solution $u$ of \eqref{eq:main-Dirichlet} could model the population density of a species in a geographic region $\Omega$.
The reaction $f$ represents the growth rate of the species and the Dirichlet boundary models a hostile border that kills individuals.
A solutions of \eqref{eq:main-Dirichlet} represents an equilibrium or steady state between these two competing factors.
The study of this system naturally begins with a fundamental question: are such steady states unique?
This question has proved to be exceedingly rich.

The steady states of many homogeneous reaction--diffusion equations posed on the whole space $\R^d$ are well understood; for more details, see the discussion of related works below.
Much less is known on other domains.
Here, we examine the influence of geometry on the solutions of \eqref{eq:main-Dirichlet}.
We study homogeneous equations in subsets of $\R^d$ with Dirichlet, Robin, and Neumann boundary conditions.
As we shall see, absorbing boundary conditions of Dirichlet or Robin type exhibit the most complex behavior.

We are further motivated by parabolic reaction--diffusion equations, which represent population \emph{dynamics}.
The parabolic form of \eqref{eq:main-Dirichlet} describes the propagation of the species throughout its environment, and an elliptic solution represents a fixed point of these dynamics.
Typically, stable steady states are attractors of the parabolic equation at long times, and unstable steady states separate domains of attraction.
The classification of solutions to the elliptic problem \eqref{eq:main-Dirichlet} is thus an essential element of a broader understanding of the time-dependent dynamics.

In our context, several prior works have treated bounded domains; these serve as important precursors for this paper.
However, we expect the parabolic dynamics to be of greater interest in \emph{unbounded} domains.
On unbounded domains, solutions have sufficient space to develop coherent long-time patterns like asymptotic spreading speeds and traveling waves.
We are therefore keenly interested in the classification of steady states in the unbounded case.
A handful of prior works have considered \emph{specific} unbounded domains including epigraphs, half-spaces, and cones.
However, to our knowledge, the nature of steady states on general unbounded domains is largely open.

We aim to fill this gap.
In this paper, we show that nontrivial steady states of so-called strong-KPP reactions are unique under quite general conditions on the domain and boundary data.
Moreover, we demonstrate that the strong-KPP hypothesis cannot be easily weakened: a slightly more general class of reactions can support multiple nontrivial steady states.
In a forthcoming paper, we take up the study of more general reactions and obtain various conditions on the domain that yield uniqueness.
In view of our previous work on the half-space \cite{BG}, the present paper is the second in a sequence focused on the steady states and long-time dynamics of reaction--diffusion equations in the presence of boundary.

\subsection*{Setup.}
We now describe our objects of study in greater detail.
To begin, we state our hypotheses on the reaction $f$ and the domain $\Omega$.

We assume that the nonlinearity ${f \colon [0, \infty) \to \R}$ is $\m{C}^{1,\gamma}$ for some $\gamma \in (0, 1]$ and satisfies $f(0) = f(1) = 0$.
As a consequence, $0$ solves \eqref{eq:main-Dirichlet} and $1$ is a supersolution, in the sense that it satisfies \eqref{eq:main-Dirichlet} with $\geq$ in place of equalities.
We also assume that $f|_{(1, \infty)} < 0$, so that the reaction drives large values down towards $1$.
The maximum principle then implies that all positive bounded solutions of \eqref{eq:main-Dirichlet} take values between $0$ and $1$.
We are thus primarily interested in the behavior of $f$ on $(0, 1)$.
We consider three nested classes of reactions: \emph{positive}, \emph{weak-KPP}, and \emph{strong-KPP}.

All our reactions will be \emph{positive} in the sense that:
\begin{enumerate}[label = \textup{(P)}, leftmargin = 5em, labelsep = 1em, itemsep= 1ex, topsep = 1ex]
\item
  \label{hyp:positive}
  $f|_{(0,1)} > 0$ and $f'(0^+) > 0$.
\end{enumerate}
Positive reactions are sometimes known as ``monostable.''
We use distinct terminology to emphasize the positive derivative at $u = 0$, which plays a significant role in our analysis.
In many sources, ``monostability'' only denotes the first condition in \ref{hyp:positive}.

Our \emph{weak-KPP} reactions are positive and additionally satisfy:
\begin{enumerate}[label = \textup{(W)}, leftmargin = 5em, labelsep = 1em, itemsep= 1ex, topsep = 1ex]
\item
  \label{hyp:weak}
  $f(s) \leq f'(0)s$ for all $s \in [0, 1]$.
\end{enumerate}
Finally, \emph{strong-KPP} reactions are weak-KPP and satisfy a form of strict convexity:
\begin{enumerate}[label = \textup{(S)}, leftmargin = 5em, labelsep = 1em, itemsep= 1ex, topsep = 1ex]
\item
  \label{hyp:strong}
  The map $s \mapsto \frac{f(s)}{s}$ is strictly decreasing on $(0, 1]$.
\end{enumerate}
Both \ref{hyp:weak} and \ref{hyp:strong} have been termed the ``KPP condition'' in recognition of the pioneering work of Kolmogorov, Piskunov, and Petrovsky~\cite{KPP} on a similar class of reactions.
In many settings, the distinction is unimportant.
For example, weak and strong-KPP reactions exhibit identical propagation phenomena in the whole space.
The same holds in half-spaces~\cite{BG} and cones~\cite{LL}.

However, the steady states of weak and strong-KPP reactions behave quite differently in general domains.
As we shall see, the strong KPP hypothesis \ref{hyp:strong} ensures broad uniqueness for steady states.
In marked contrast, both positive and weak-KPP reactions can easily support multiple steady states.
These divergent behaviors lead us to carefully delineate between the hypotheses \ref{hyp:weak} and \ref{hyp:strong}.
We devote most of our attention to strong-KPP reactions.

Throughout, we assume that the domain $\Omega$ is open, nonempty, connected, and uniformly $\m{C}^{2, \gamma}$ smooth.
We make this precise in Definition~\ref{def:smooth} in the appendix.
For now, we merely note that this hypothesis includes a uniform interior ball condition.

In the remainder of the paper, we consider more general boundary conditions.
Let $\nu$ denote the outward normal vector field on $\partial \Omega$.
Given $\const \in [0, 1]$, let $\m{N}_\const \colon \m{C}^1\big(\bar\Omega\big) \to \m{C}^0(\partial \Omega)$ denote the boundary operator
\begin{equation}
  \label{eq:boundary-op}
  \m{N}_\const u \coloneqq \const \partial_\nu u + (1 - \const) u|_{\partial \Omega}.
\end{equation}
Then we study a natural generalization of \eqref{eq:main}:
\begin{equation}
  \label{eq:main}
  \begin{cases}
    -\Delta u = f(u) & \text{in } \Omega,\\
    \m{N}_\const u = 0 & \text{on } \partial \Omega.
  \end{cases}
\end{equation}
The values $\const = 0$, $\const \in (0, 1)$, and $\const = 1$ correspond to Dirichlet, Robin, and Neumann boundary conditions, respectively.

Even in its weak form, the KPP condition \ref{hyp:weak} ensures that the ``rate of growth'' $f(s)/s$ is maximized at $s = 0$, where $f$ resembles its linearization $f'(0)s$.
KPP phenomena are thus often ``linearly determined:'' they can be understood via the linearization of \eqref{eq:main} about $0$.
And indeed, much of our analysis revolves around spectral properties of the linearization.
We express our results in terms of the generalized principal eigenvalue introduced in \cite{BNV} and applied to unbounded domains in \cite{BR}.

Let $\m{L}$ be an elliptic operator on $\Omega$.
Then the \emph{generalized principal eigenvalue} of $-\m{L}$ on $\Omega$ with boundary parameter $\const$ is given by
\begin{equation}
  \label{eq:lambda}
  \lambda(-\m{L}, \Omega, \const) \coloneqq \sup\big\{\mu \in \R \mid \exists \, \phi \in W_{\mathrm{loc}}^{2,d}(\Omega)_+ \hspace{1ex}\text{s.t.} \hspace{1ex} \m{N}_\const \phi \geq 0, \hspace{1ex} -\m{L} \phi \geq \mu \phi\big\}.
\end{equation}
Here $W_{\mathrm{loc}}^{2,d}(\Omega)_+$ denotes the set of positive functions in $W_{\mathrm{loc}}^{2,d}(\Omega)$.
This formulation with boundary conditions first appeared in \cite{PS}.

The definition \eqref{eq:lambda} is closely related to the principal eigenvalue familiar from bounded domains; see Proposition~\ref{prop:lambda} below for details.
We will work almost exclusively with the Laplacian, so we make liberal use of the shorthand
\begin{equation*}
  \lambda(\Omega, \const) \coloneqq \lambda(-\Delta, \Omega, \const).
\end{equation*}
As we shall see, this eigenvalue is intimately linked with existence in \eqref{eq:main}.

First, however, we take up the question of uniqueness.
In the strong-KPP case, Rabinowitz~\cite{Rabinowitz} showed that \eqref{eq:main} admits at most one positive solution on bounded domains; the first author subsequently established the same in a more general framework~\cite{Berestycki}.
Thus any obstruction to uniqueness can only occur ``at infinity.''
We therefore take some care with the structure of $\Omega$ at infinity.
Given a sequence of points $(x_n)_{n \in \N}$ in $\Omega$, we say that $\Omega^*$ is the \emph{connected limit} of $\Omega$ along $(x_n)_n$ if the sequence of domains $(\Omega - x_n)_{n \in \N}$ converges in a suitable sense and $\Omega^*$ is the connected component of the limit that contains the origin.
For a precise definition, see Definitions~\ref{def:limit} and \ref{def:connected-limit}.
This terminology accounts for the fact that connected domains may have disconnected limits; see Figure~\ref{fig:connected-limit}.
\begin{figure}[t]
  \centering
  \includegraphics[width=\linewidth]{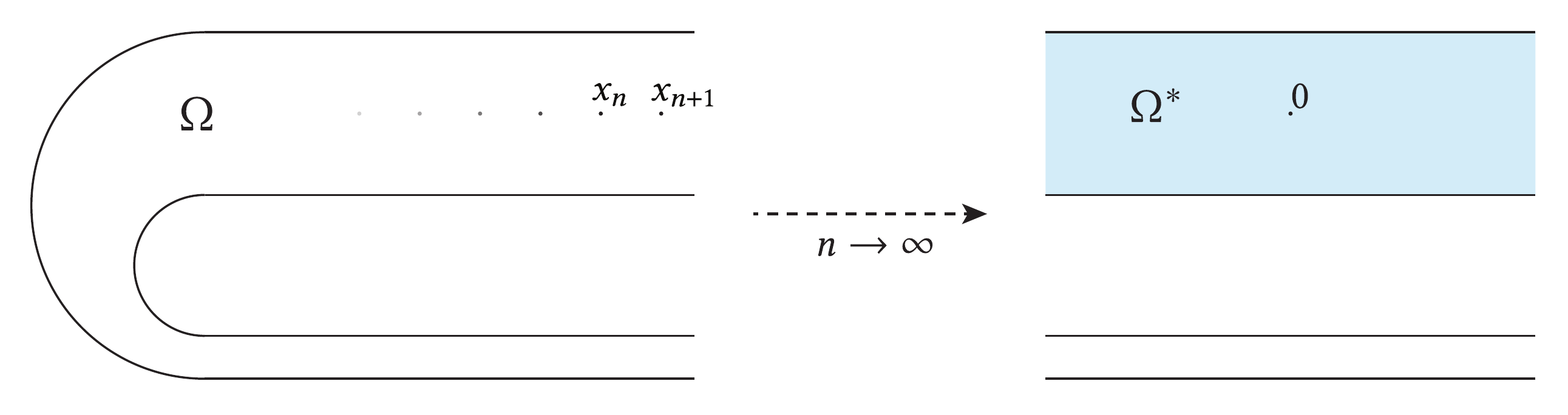}
  \caption{A horseshoe domain $\Omega$ such that $\lim_{n \to \infty}(\Omega - x_n)$ is disconnected.
    The sequence illustrated here selects the upper strip as the connected limit $\Omega^*$.
    There are exactly two other possibilities, up to translation: the lower strip and $\Omega$ itself, which is the connected limit of all bounded sequences.
  }
  \label{fig:connected-limit}
\end{figure}

We next introduce the collection of eigenvalues on all connected limits.
\begin{definition}
  The \emph{principal limit spectrum} is
  \begin{equation*}
    \Sigma(\Omega, \const) \coloneqq \big\{\lambda(\Omega^*, \const) \mid \Omega^* \text{ is a connected limit of } \Omega\big\}.
  \end{equation*}
  We refer to the elements of $\Sigma$ as \emph{(principal) limit eigenvalues}.
  We denote the closure by $\bar{\Sigma}(\Omega, \rho)$.
\end{definition}
\subsection*{Results.}
With these notions, we can state our main results for strong-KPP reactions.
We show that strong-KPP steady states are unique provided the principal limit spectrum avoids the critical eigenvalue $f'(0)$.
\begin{theorem}
  \label{thm:uniqueness}
  Suppose $f$ satisfies \ref{hyp:strong} and $f'(0) \not \in \bar{\Sigma}(\Omega, \const)$.
  Then \eqref{eq:main} has at most one positive bounded solution.
\end{theorem}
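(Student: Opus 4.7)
The plan is to combine the Berestycki sliding method \cite{Berestycki} with a blow-up analysis at infinity, in which the spectral hypothesis on $\bar{\Sigma}(\Omega, \const)$ rules out the only degenerate scenario that can occur on an unbounded domain. Suppose for contradiction that $u_1 \not\equiv u_2$ are two positive bounded solutions of \eqref{eq:main}. The argument proceeds in two steps: first, a rescaled limit argument enforces $\inf_\Omega u_i > 0$ for each $i$; second, the sliding comparison produces a contradiction.

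For the first step, suppose a positive bounded solution $u$ satisfies $u(x_n) \to 0$ along some sequence. Renormalize by
\begin{equation*}
  \tilde u^n(x) \coloneqq \frac{u(x + x_n)}{u(x_n)}, \qquad \tilde u^n(0) = 1,
\end{equation*}
and observe that $-\Delta \tilde u^n = [f(u(x + x_n))/u(x + x_n)]\, \tilde u^n$. Since $u(\cdot + x_n) \to 0$ on compact sets (by elliptic compactness and the strong maximum principle), the coefficient tends to $f'(0)$ locally uniformly. Uniform interior and boundary Harnack inequalities, furnished by the uniform $\m{C}^{2,\gamma}$ regularity of $\Omega$, keep $\tilde u^n$ locally bounded; a subsequence converges in $\m{C}^{2,\gamma}_{\mathrm{loc}}$ to a positive $\phi$ on a connected limit $\Omega^*$ (Definition~\ref{def:connected-limit}) satisfying
\begin{equation*}
  -\Delta \phi = f'(0) \phi \text{ in } \Omega^*, \qquad \m{N}_\const \phi = 0 \text{ on } \partial \Omega^*.
\end{equation*}
By the characterization of the generalized principal eigenvalue (Proposition~\ref{prop:lambda}), $\lambda(\Omega^*, \const) = f'(0)$, so $f'(0) \in \Sigma(\Omega, \const) \subset \bar{\Sigma}(\Omega, \const)$, contradicting the hypothesis. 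Consequently $\inf_\Omega u_i > 0$ for each $i$, and $u_1/u_2$ is bounded above and below by positive constants.

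For the second step, let $\tau^* \coloneqq \sup\{t \geq 0 : t u_1 \leq u_2 \text{ in } \Omega\} \in (0, \infty)$. After exchanging $u_1$ and $u_2$ if needed we may assume $\tau^* \in (0, 1)$, since $u_1 \not\equiv u_2$. Set $w \coloneqq u_2 - \tau^* u_1 \geq 0$. Using \ref{hyp:strong},
\begin{equation*}
  -\Delta w - c(x) w = h(x), \qquad h(x) \coloneqq f(\tau^* u_1) - \tau^* f(u_1),
\end{equation*}
where $c$ is bounded (Lipschitz $f$) and $h > 0$ strictly because $s \mapsto f(s)/s$ is strictly decreasing and $u_1 > 0$. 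On a bounded domain, the strong maximum principle combined with the Hopf lemma at a contact point of $w/u_1$ already produces a contradiction \cite{Berestycki}. On a general domain I would select $x_n \in \Omega$ with $u_2(x_n)/u_1(x_n) \to \tau^*$ and translate; the compactness from the first step yields limit solutions $u_i^*$ of \eqref{eq:main} on a connected limit $\Omega^*$ with $\tau^* u_1^* \leq u_2^*$ and $u_2^*(0) = \tau^* u_1^*(0)$. The lower bound $\inf u_1 > 0$ forces $u_1^*(0) > 0$, so the strong maximum principle applied to $w^* \coloneqq u_2^* - \tau^* u_1^* \geq 0$---which vanishes at the interior point $0$ and satisfies the same linearized equation with strictly positive source---forces $w^* \equiv 0$, contradicting $h^*(0) > 0$.

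The principal obstacle is the spectral identification $\lambda(\Omega^*, \const) = f'(0)$ from the mere existence of a positive eigenfunction on the possibly exotic limit domain $\Omega^*$; on general unbounded domains, positive eigenfunctions can a priori exist for a range of eigenvalues, and pinning the eigenvalue to $\lambda(\Omega^*, \const)$ is precisely what I expect Proposition~\ref{prop:lambda} to provide. Beyond that, the argument relies on uniform Harnack inequalities and $\m{C}^{2,\gamma}$ elliptic compactness for moving domains, both controlled by the uniform $\m{C}^{2,\gamma}$ regularity of $\partial \Omega$.
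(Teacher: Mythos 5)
Your Step 1 is where the argument fails, and the gap is structural rather than technical. You renormalize a vanishing solution to obtain a positive $\phi$ on a connected limit $\Omega^*$ satisfying $-\Delta\phi = f'(0)\phi$ with $\m{N}_\const\phi = 0$, and you then invoke Proposition~\ref{prop:lambda} to conclude $\lambda(\Omega^*, \const) = f'(0)$. But on unbounded domains the existence of a positive solution of the eigenvalue equation with eigenvalue $\mu$ only forces $\lambda(\Omega^*, \const) \geq \mu$ (use $\phi$ as a supersolution in \eqref{eq:lambda}); positive eigenfunctions exist for every $\mu \leq \lambda$, not only $\mu = \lambda$. On $\R^d$, for instance, every $e^{\xi\cdot x}$ is a positive eigenfunction with eigenvalue $-\abs{\xi}^2$ while $\lambda = 0$. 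Nothing in Proposition~\ref{prop:lambda} pins the eigenvalue to the principal one, and no such statement holds on unbounded $\Omega^*$. Your blow-up therefore only yields $\lambda(\Omega^*, \const) \geq f'(0)$, and together with $f'(0)\not\in\bar\Sigma(\Omega, \const)$ this gives $\lambda(\Omega^*, \const) > f'(0)$ --- which is entirely consistent with the hypothesis, not a contradiction.

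The deeper issue is that $f'(0)\not\in\bar\Sigma(\Omega, \const)$ does \emph{not} exclude limit eigenvalues strictly above $f'(0)$; it only requires $\Sigma$ to stay away from the critical value, and $\Sigma$ may have points on both sides. Along any sequence whose connected limit has $\lambda(\Omega^*, \const) > f'(0)$, every bounded solution \emph{must} vanish (the second part of Lemma~\ref{lem:zero-limit}). Concretely, take the bulb domain of Section~\ref{sec:bulb} with half-cylinder width $L$ chosen so that $\pi^2/L^2 > f'(0)$ while $\lambda(\Omega, 0) < f'(0)$: then $f'(0)\not\in\bar\Sigma(\Omega, 0)$, Theorem~\ref{thm:uniqueness} applies and solutions exist, yet every solution decays to $0$ down the tube, so $\inf_\Omega u_i = 0$ and the lower bound on $u_1/u_2$ you need for Step 2 is not available. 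The paper's ample--narrow decomposition (Theorem~\ref{thm:domain-decomposition}) is designed precisely to circumvent this: the sliding/Hopf argument is run only on the ample part $\Omega_-$, where solutions are genuinely bounded below by Lemma~\ref{lem:zero-limit}, while on the narrow part $\Omega_+$ the maximum principle of Proposition~\ref{prop:MP} substitutes for comparability. A secondary gap in your Step 2: the contact point $0$ of $w^*$ may land on $\partial\Omega^*$, in which case the interior strong maximum principle does not apply and a Hopf-lemma boundary-contact analysis (as carried out in the paper's proof) is required.
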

In bounded domains, this follows from Corollary 4.1 in~\cite{Rabinowitz}.
The argument in~\cite{Rabinowitz} extends to unbounded domains provided $u$ does not vanish locally uniformly along a subsequence in $\Omega$.
However, if $\Omega$ has a limit eigenvalue above $f'(0)$, then every solution \emph{does} vanish along the corresponding subsequence.
On the other hand, we show that \eqref{eq:main} satisfies the maximum principle when $\lambda(\Omega, \const) > f'(0)$.
We can combine these tools to recover uniqueness whenever the limit eigenvalues above $f'(0)$ can be cleanly separated from those below.
To this end, we prove the following geometric result that may be of independent interest.
We state the simpler Dirichlet case here; for the full form, see Theorem~\ref{thm:domain-decomposition} below.
\begin{theorem}
  \label{thm:decomp}
  A domain $\Omega$ satisfies $f'(0) \not \in \bar{\Sigma}(\Omega, 0)$ if and only if $\Omega$ can be written as the union of two uniformly $\m{C}^{2,\gamma}$ open sets $\Omega_-$ and $\Omega_+$ such that $\Sigma(\Omega_-, 0) \subset [0, f'(0))$ and $\Sigma(\Omega_+, 0) \subset (f'(0), \infty)$.
\end{theorem}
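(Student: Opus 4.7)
\emph{Forward direction.} Assume $f'(0) \notin \bar\Sigma(\Omega, 0)$ and fix $\epsilon > 0$ with $\bar\Sigma(\Omega, 0) \cap [f'(0) - 2\epsilon, f'(0) + 2\epsilon] = \emptyset$. The plan is to convert this spectral gap into a geometric partition. For each $x \in \Omega$ and $R > 0$, let $\Omega^{(x, R)}$ be the connected component of $\Omega \cap B_R(x)$ containing $x$, and set $\ell_R(x) \coloneqq \lambda(\Omega^{(x, R)}, 0)$, with Dirichlet data on the full boundary. Stability of the Dirichlet principal eigenvalue under Hausdorff convergence of uniformly $\m{C}^{2,\gamma}$ domains gives $\ell_R(x_n) \to \lambda(\Omega^*, 0)$ whenever $\Omega - x_n \to \Omega^*$ and $R \to \infty$. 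Hence for $R$ large enough and $x$ outside a compact set $K \subset \Omega$, one has $\ell_R(x) \notin [f'(0) - \epsilon, f'(0) + \epsilon]$. Define the open sets $U_\pm \coloneqq \{x \in \Omega \setminus K : \pm(\ell_R(x) - f'(0)) > \epsilon\}$; these disjointly cover $\Omega \setminus K$.

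\emph{Smoothing.} To produce uniformly $\m{C}^{2,\gamma}$ sets $\Omega_\pm$ with $\Omega = \Omega_- \cup \Omega_+$, I would mollify a bounded proxy $\psi$ for $\ell_R - f'(0)$ at a fixed scale, invoke Sard's theorem to select a regular level $\eta \in (0, \epsilon/2)$, and cut: $\Omega_- \coloneqq \{\psi < \eta\}$ and $\Omega_+ \coloneqq \{\psi > -\eta\}$, with a controlled overlap $\{|\psi| < \eta\}$ and the choice across $K$ made arbitrarily. Uniform $\m{C}^{2,\gamma}$ regularity of $\partial\Omega$ together with fixed-scale mollification transfers to uniform regularity of $\partial\Omega_\pm$. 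To verify the spectral inclusions, any connected limit $\Omega^*_-$ of $\Omega_-$ embeds in a connected limit of $\Omega$ whose principal eigenvalue is below $f'(0) - \epsilon$, so Dirichlet monotonicity yields $\lambda(\Omega^*_-, 0) < f'(0)$; the argument on $\Omega_+$ is symmetric.

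\emph{Backward direction.} Conversely, given $\Omega = \Omega_- \cup \Omega_+$ satisfying the stated conditions, let $\Omega^*$ be a connected limit of $\Omega$ along $(x_n)$. After a subsequence, $(x_n)$ lies in some $\Omega_\pm$, and the corresponding connected limit $\Omega^*_\pm$ of $\Omega_\pm$ satisfies $\Omega^*_\pm \subseteq \Omega^*$. When $(x_n) \subset \Omega_-$, monotonicity immediately gives $\lambda(\Omega^*, 0) \leq \lambda(\Omega^*_-, 0) < f'(0)$. The case $(x_n) \subset \Omega_+$ requires a complementary lower bound on $\lambda(\Omega^*, 0)$: a diagonal extraction through sequences inside $\Omega_+$ exhibits any would-be accumulation at $f'(0)$ as an element of $\bar\Sigma(\Omega_+, 0)$, which is disjoint from $f'(0)$ provided the family of connected limits is closed under the convergence used in Definitions~\ref{def:limit}--\ref{def:connected-limit}.

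\emph{Main obstacle.} I expect the smoothing step to be the crux. One must simultaneously maintain uniform $\m{C}^{2,\gamma}$ constants along the new interior boundary, avoid carving out thin necks whose local Dirichlet eigenvalue spuriously exceeds $f'(0) + \epsilon$ and thereby violates the inclusion on $\Omega_-$, and preserve the strict spectral dichotomy on each side. The $2\epsilon$ spectral gap provides the slack needed to absorb mollification error, the uniform regularity of $\partial\Omega$ feeds through the construction at a fixed scale, and the compact region $K$ is absorbed into either side without consequence.
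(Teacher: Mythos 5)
Your backward direction is essentially the paper's, modulo some care about subsequences of $(x_n)$ that straddle $\Omega_-$ and $\Omega_+$ (the paper splits on whether $\op{dist}_\Omega(x_n, \Omega_-)$ stays bounded, which is cleaner than asking $(x_n)$ to lie wholly in one piece). The forward direction, however, has several genuine gaps.

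\textbf{Monotonicity runs the wrong way in the ample verification.}
You argue: a connected limit $\Omega_-^* \subset \Omega^*$ where $\lambda(\Omega^*, 0) < f'(0) - \epsilon$, ``so Dirichlet monotonicity yields $\lambda(\Omega_-^*, 0) < f'(0)$.'' But Dirichlet monotonicity gives $\lambda(\Omega_-^*, 0) \geq \lambda(\Omega^*, 0)$ --- a \emph{lower} bound, useless here. To upper-bound $\lambda(\Omega_-^*)$ one must exhibit a \emph{subset} of $\Omega_-^*$ with small eigenvalue. The paper does precisely this: it shows every $x \in \Omega_-$ has $\lambda\big(\op{comp}_x(\Omega_- \cap B_{2K+1}(x)), 0\big) \leq f'(0) - \delta$ for a uniform $K$, then passes the bounded-domain balls to the limit. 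The superset argument does not close the loop. (By contrast, this direction of monotonicity \emph{is} correct for the narrow side --- so the two sides are not, in fact, symmetric.)

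\textbf{The uniform dichotomy is unsupported.}
The step ``Hence for $R$ large enough and $x$ outside a compact set $K$, $\ell_R(x) \notin [f'(0)-\epsilon, f'(0)+\epsilon]$'' is the crux and does not follow from the spectral gap alone; for a fixed $R$, the ball eigenvalue $\ell_R(x)$ always \emph{over}estimates the limit eigenvalue, with error that need not be uniform in $x$. The paper proves (by compactness and contradiction against the spectral gap, using Lemma~\ref{lem:semicontinuous} and truncations) a \emph{two-radius} dichotomy: either $\ell_{2R}(x) > \mu + \delta$ or $\ell_K(x) \leq \mu - \delta$ for possibly much larger $K$. A single-radius version is not what the argument delivers, and one cannot simply take the larger of the two radii because the eigenvalue drops past the window.

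\textbf{Lieb's theorem is missing.}
Even granting the smoothing and the (two-radius) dichotomy, knowing that every local ball $B_R(x)$ through $\Omega_+$ has Dirichlet eigenvalue $> f'(0) + \epsilon$ does \emph{not} directly yield $\lambda(\Omega_+, 0) > f'(0)$, which is what narrowness requires. This local-to-global passage is exactly Theorem~\ref{thm:Lieb}: choosing $R$ so that $\lambda(B_1,0)R^{-2} < \epsilon$, Lieb converts a uniform local lower bound into a global one. The paper flags this as essential, and the proof does not appear to go through without it.

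Your smoothing plan (mollify at a fixed scale, cut at a Sard-regular level, track the $\m{C}^{2,\gamma}$ constants through the uniform geometry of $\partial\Omega$) is in the right spirit and is similar to the paper's truncation Lemma~\ref{lem:truncation}; you correctly identified this as delicate, and the $2\epsilon$ gap is indeed what absorbs the error. But the three issues above are the load-bearing gaps.
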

\noindent
For a visualization of the decomposition in Theorem~\ref{thm:decomp}, see Figure~\ref{fig:ample-narrow}.
The sets $\Omega_-$ and $\Omega_+$ typically overlap; thus the decomposition is by no means unique.
Moreover, the sets need not be connected and may be empty.

Our construction of $\Omega_+$ makes essential use of a deep result of Lieb concerning principal eigenvalues on intersections of open sets \cite{Lieb}.
The decomposition in Theorem~\ref{thm:decomp} is also similar in spirit to ideas at the heart of the moving plane method.
Indeed, in that method, as presented in~\cite{BN}, one decomposes $\Omega$ into two parts, one with certain compactness properties and the other with a maximum principle.
It seems possible that Theorem~\ref{thm:decomp} could lead to further developments in this direction.
We likewise anticipate that this decomposition could bear fruit in the study of inhomogeneous equations in the whole space.
\begin{figure}[t]
  \centering
  \includegraphics[width=\linewidth]{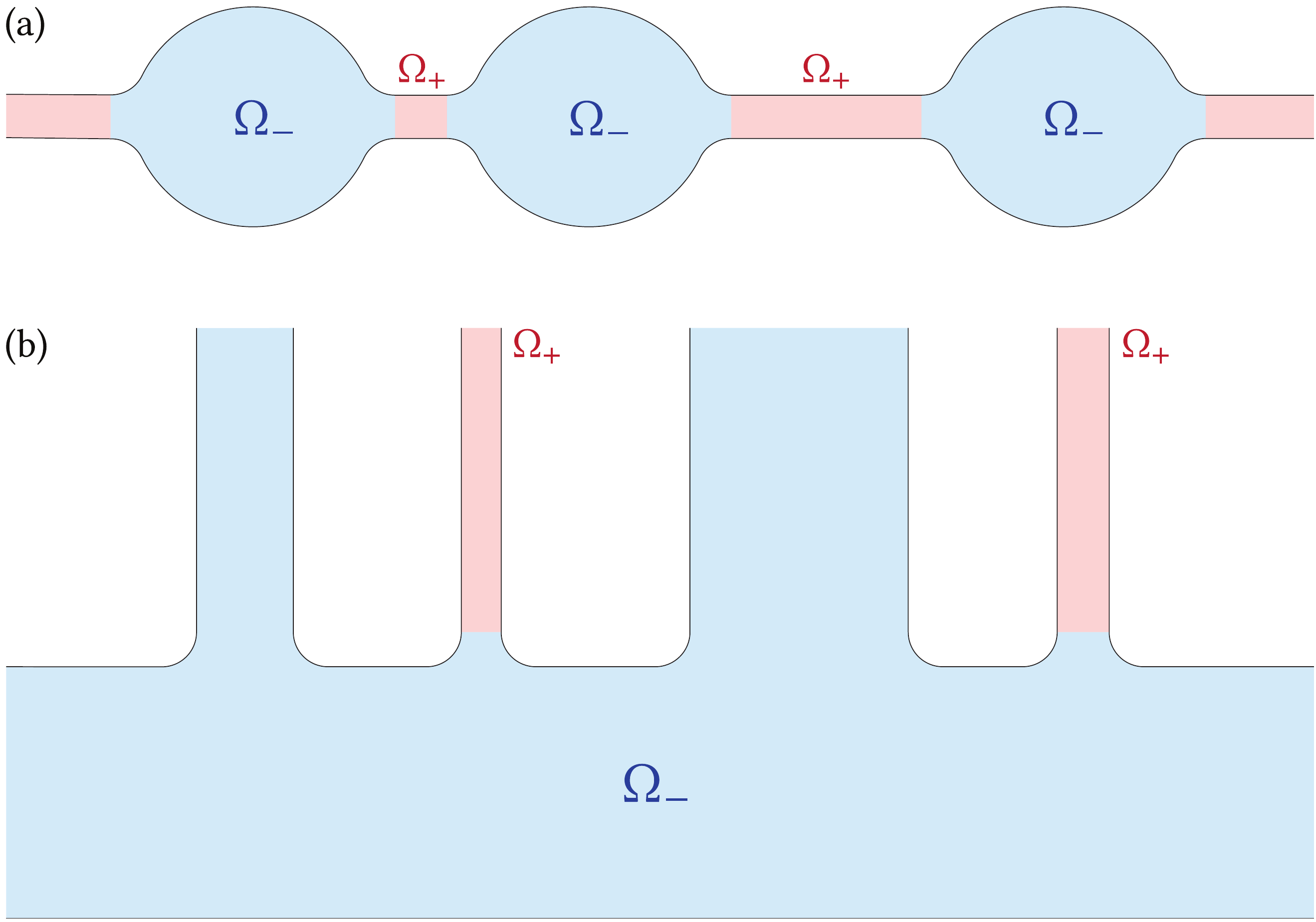}
  \vspace*{2pt}
  \caption[Examples of the ample--narrow decomposition]{
    Illustrations of the decomposition in Theorem~\ref{thm:decomp} with $\Omega_-$ in blue and $\Omega_+$ in red.
    Strictly speaking, the sets are smooth and overlap; we omit these features for clarity.
    (a) An infinite sequence of large balls joined by narrow tubes of varying lengths.
    (b) An infinite ``comb'' with varying tooth width.
    As these examples show, both $\Omega_+$ and $\Omega_-$ may be disconnected with infinitely many components.
  }
  \label{fig:ample-narrow}
\end{figure}

At present, our methods cannot treat critical eigenvalues approaching or equal to $f'(0)$.
Nonetheless, we conjecture that strong-KPP uniqueness holds in general.
\begin{conjecture}
  \label{conj:KPP}
  In a general domain, if $f$ is strong-KPP, then \eqref{eq:main} admits at most one positive bounded solution.
\end{conjecture}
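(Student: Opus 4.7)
The natural plan is to push the sliding argument of Rabinowitz \cite{Rabinowitz} as far as a rescaling method will allow. Given two positive bounded solutions $u_1, u_2$ of \eqref{eq:main}, set
\[t^* \coloneqq \sup\{t > 0 \mid t u_1 \leq u_2 \text{ on } \Omega\}.\]
Uniform $\m{C}^{2,\gamma}$ regularity of $\partial \Omega$ and Hopf-type estimates ensure $t^* \in (0, \infty)$, and by symmetry it suffices to rule out $t^* < 1$. Set $w \coloneqq u_2 - t^* u_1 \geq 0$. A mean-value linearization gives
\[-\Delta w - c(x) w = f(t^* u_1) - t^* f(u_1),\]
with $c \in L^\infty$, and \ref{hyp:strong} together with $u_1 > 0$ renders the right-hand side strictly positive. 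Thus $w$ cannot attain zero at an interior point, and a standard Hopf comparison excludes contact on $\partial \Omega$. This recovers the bounded-domain argument of Rabinowitz.

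In an unbounded $\Omega$, $w$ may fail to attain its infimum. Select $(x_n) \subset \Omega$ realizing $t^*$ in the sense that $u_2(x_n)/u_1(x_n) \to t^*$, and pass to a connected limit $\Omega^*$ along $(x_n)$. Interior Schauder estimates give limits $u_i^*$ on $\Omega^*$ solving \eqref{eq:main} with $t^* u_1^* \leq u_2^*$ and equality at the origin. If $u_1^* \not\equiv 0$, the strong maximum principle forces $u_1^* > 0$ on $\Omega^*$ and the preceding argument transplants verbatim, yielding the contradiction. This is precisely the scenario exploited by Theorem \ref{thm:uniqueness}, whose assumption $f'(0) \notin \bar\Sigma$ excludes the genuinely harder case below.

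The obstacle is $u_1^*, u_2^* \equiv 0$, i.e.\ both solutions decay to zero along $(x_n)$. To probe it, rescale by $M_n \coloneqq u_1(x_n) \to 0$ and set $v_i^n(x) \coloneqq u_i(x + x_n)/M_n$, so that $v_1^n(0) = 1$ and $v_2^n(0) \to t^*$. These satisfy $-\Delta v_i^n = f(M_n v_i^n)/M_n \to f'(0) v_i^n$, and elliptic compactness produces non-negative bounded limits $v_i^*$ on $\Omega^*$ solving $-\Delta v_i^* = f'(0) v_i^*$ with $\m{N}_\const v_i^* = 0$. Strict positivity of $v_1^*$ identifies $\lambda(\Omega^*, \const) = f'(0)$, i.e.\ $f'(0) \in \bar\Sigma(\Omega, \const)$. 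The surviving inequality $t^* v_1^* \leq v_2^*$ together with the linear strong maximum principle forces $v_2^* \equiv t^* v_1^*$.

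The hard part is converting this linearized proportionality into a contradiction, since at leading order the strong-KPP hypothesis \ref{hyp:strong} has been erased. My plan is to expand one order further. Write $u_i(\,\cdot\, + x_n) = M_n v_i^* + \varepsilon_n z_i^n$ with a gauge $\varepsilon_n \ll M_n$ (for instance $\varepsilon_n = M_n^{1+\gamma}$ when $f \in \m{C}^{1,\gamma}$), and consider the equation for $Z^n \coloneqq z_2^n - t^* z_1^n$. After dividing by $\varepsilon_n$, the source is the rescaling of $f(t^* M_n v_1^*) - t^* f(M_n v_1^*)$, which under \ref{hyp:strong} converges to a strictly signed quantity proportional to $(v_1^*)^{1+\gamma}$. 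Its pairing against the principal eigenfunction $v_1^*$ is nonzero, so in the language of Fredholm alternatives this equation admits no bounded solution --- contradicting the uniform bound on $z_i^n$ inherited from $v_i^n$. Implementing this Fredholm-type obstruction on an unbounded $\Omega^*$, where the classical theory is unavailable, and treating the degenerate sub-case $f''(0) = 0$ (compatible with \ref{hyp:strong}) in which the correct gauge $\varepsilon_n$ and the precise decay rate of $f'(0) - f(s)/s$ become subtle, is where I expect the essential difficulty to lie, and likely explains why the statement appears as Conjecture \ref{conj:KPP}.
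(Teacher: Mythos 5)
The statement you are addressing is labeled a \emph{conjecture} in the paper, and the authors do not prove it; Theorem~\ref{thm:uniqueness} only handles the case $f'(0) \notin \bar\Sigma(\Omega,\const)$, and Section~\ref{sec:bulb} proves the conjecture in a single model case (a bounded ``bulb'' augmented by a critical half-cylinder) under the extra hypothesis \eqref{eq:nonlinear}, i.e.\ $f(s) = f'(0)s - Bs^2 + \m{O}(s^3)$ with $B > 0$. Your proposal is therefore not being measured against a proof, and you correctly stop short of claiming one. What is worth noting is that your ``expand one order further'' plan is exactly the mechanism at work in the paper's partial result: in Lemma~\ref{lem:critical-bulb}, the Fourier coefficient $\al_1$ of the critical transverse mode satisfies the approximate ODE $\al_1'' = B\braket{\varphi_1^2,\varphi_1}\,\al_1^2(1+\beta)$, and it is precisely the quadratic term from \eqref{eq:nonlinear} that pins the asymptotic amplitude to $\al_1(x) \sim 6/(\zeta x^2)$ and eliminates the free scaling constant that blocks the Rabinowitz sliding step. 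So the ``Fredholm-type obstruction'' you anticipate is realized concretely there as a nonlinear ODE on the critical amplitude. You also correctly identify the two places this hardens in general: the degenerate case $f''(0)=0$ (which the paper's hypothesis $B>0$ explicitly excludes), where the balance between the decaying principal mode and the nonlinearity changes and the gauge $\eps_n$ is no longer $M_n^2$; and the fact that a limit domain $\Omega^*$ need not be a cylinder, so the clean Fourier decomposition and the symmetry result of Lemma~\ref{lem:symmetry} --- which the paper leans on heavily --- are unavailable. One small gap in your write-up: passing from $t^* v_1^* \leq v_2^*$ with equality at the origin to $v_2^* \equiv t^* v_1^*$ uses the strong maximum principle for the linear equation $-\Delta v = f'(0) v$, which is fine; but you also implicitly normalize $v_2^n$ by the same $M_n$, and since $u_2(x_n)/u_1(x_n)$ may oscillate, you should fix this ratio to converge along the chosen subsequence before asserting $v_2^n(0)\to t^*$. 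This is minor and does not change the picture that the essential difficulty you flag is the real one.
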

To support this conjecture, we prove it in the special case of a bounded domain augmented by a half-cylinder with a critical limit eigenvalue; see Section~\ref{sec:bulb} for details.

The strong KPP property~\ref{hyp:strong} is essential to the uniqueness in Theorem~\ref{thm:uniqueness}.
In fact, under Dirichlet or Robin conditions, weak-KPP reactions can have multiple positive solutions even on bounded domains.
\begin{proposition}
  \label{prop:nonuniqueness}
  For every bounded domain $\Omega$ and boundary parameter $\const \in [0, 1)$, there exists a reaction $f$ satisfying \ref{hyp:weak} such that \eqref{eq:main} admits multiple positive bounded solutions.
\end{proposition}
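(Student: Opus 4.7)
The plan is to construct a weak-KPP reaction $f$ whose linearization at $0$ is \emph{exact} on an entire subinterval $[0, a]$. Once that is arranged, rescaled principal eigenfunctions furnish a continuum of positive solutions and uniqueness fails trivially. The construction pinpoints the role of the strong-KPP hypothesis: it rules out precisely the equality case of \ref{hyp:weak} on which this family is built.

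Since $\Omega$ is bounded and smooth and $\const \in [0, 1)$, standard theory supplies a strictly positive principal eigenvalue $\lambda_1 \coloneqq \lambda(\Omega, \const)$ together with a positive principal eigenfunction $\phi_1 \in \m{C}^{2,\gamma}(\bar\Omega)$, which I would normalize so that $\|\phi_1\|_\infty = 1$. I would then fix any $a \in (0, 1)$ and define $f$ piecewise: $f(s) \coloneqq \lambda_1 s$ on $[0, a]$; $f(s) \coloneqq \lambda_1 s - \lambda_1(s-a)^2/(1-a)^2$ on $[a, 1]$; and extend linearly on $[1, \infty)$ with slope $f'(1^-) = -\lambda_1(1+a)/(1-a) < 0$.

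A direct check confirms that $f \in \m{C}^{1,1} \subset \m{C}^{1,\gamma}$, the only non-smoothness being jumps in $f''$ at $s = a$ and $s = 1$ that leave $f'$ Lipschitz. Moreover $f(0) = f(1) = 0$, $f|_{(0,1)} > 0$, $f|_{(1,\infty)} < 0$, $f'(0) = \lambda_1 > 0$, and $f(s) \leq \lambda_1 s = f'(0) s$ everywhere; thus $f$ satisfies \ref{hyp:positive} and \ref{hyp:weak}. It violates \ref{hyp:strong} by design, since $f(s)/s \equiv \lambda_1$ on $[0, a]$.

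Finally, for every $\epsilon \in (0, a]$ the function $u_\epsilon \coloneqq \epsilon \phi_1$ satisfies $u_\epsilon \leq a$ pointwise by the normalization, so $-\Delta u_\epsilon = \lambda_1 u_\epsilon = f(u_\epsilon)$ in $\Omega$ and $\m{N}_\const u_\epsilon = \epsilon \m{N}_\const \phi_1 = 0$ on $\partial\Omega$. This produces a full one-parameter family of positive bounded solutions of \eqref{eq:main}, more than enough to defeat uniqueness. No genuine obstacle stands in the way; the only mild technicality is the $\m{C}^{1,\gamma}$ matching at $s = a$, which the quadratic interpolation handles automatically.
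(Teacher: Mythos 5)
Your construction is correct, and it is considerably simpler than the one the paper uses. You tune $f'(0)$ to equal $\lambda(\Omega,\const)$ and make $f$ agree exactly with its linearization on $[0,a]$; then every small multiple of the principal eigenfunction solves \eqref{eq:main}, giving a full continuum of positive bounded solutions. The paper is aware of this mechanism---it describes it explicitly in the discussion preceding Conjecture~\ref{conj:critical-nonexistence}---but deliberately chooses a different route for Proposition~\ref{prop:nonuniqueness}. The paper builds $f$ from an ignition reaction $h$ supported in $(1/2,1)$ plus a weak-KPP reaction $g$ supported in $(0,1/2)$, then smooths the sum into a genuine weak-KPP nonlinearity. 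This produces two \emph{isolated} solutions living in different ranges (one below $1/2$, one whose sup exceeds $1/2$), and the construction works for reactions with $f(s)<f'(0)s$ strictly for all $s\in(0,1]$. That is a meaningfully stronger form of nonuniqueness: it shows the strong-KPP hypothesis is not merely ruling out the degenerate linear segment your example relies on, but also a subtler failure of monotonicity of $s\mapsto f(s)/s$ away from the origin. Your argument is the shortest path to the proposition as stated; the paper's argument sacrifices brevity to convey this extra structural information, and it also illustrates a supersolution/subsolution and energy-minimization toolkit used elsewhere in the paper. Both proofs are valid.
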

On the other hand, under Neumann conditions ($\const = 1$), there is no absorption in the system.
Then we expect the reaction $f$ to push every nontrivial steady state to its unique stable root, namely $1.$
The first author, Hamel, and Nadirashvili confirmed this intuition in domains that satisfy a certain geometric condition~\cite{BHN10}.
Using the same approach, Rossi removed this constraint and thereby showed uniqueness for positive reactions in general domains with Neumann boundary---see Corollary~2.8 in \cite{Rossi}.

In the special case that $f$ is strong-KPP, Neumann uniqueness also follows from our Theorem~\ref{thm:uniqueness}.
Indeed, Proposition~\ref{prop:Neumann-zero} below states that $\lambda(\Omega, 1) = 0$ for any uniformly $\m{C}^{2,\gamma}$ domain $\Omega$.
Hence $(\Omega, 1)$ always satisfies the hypothesis of Theorem~\ref{thm:uniqueness}, and the Neumann solution $1$ of \eqref{eq:main} is always unique.
\medskip

We also study existence in \eqref{eq:main}.
In the following, we use the term \emph{periodic} to indicate that $\Omega$ is invariant under all translations in some full-rank lattice.
\begin{theorem}
  \label{thm:existence}
  If $f$ is positive in the sense of \ref{hyp:positive} and $\lambda(\Omega, \const) < f'(0),$ then \eqref{eq:main} admits a positive bounded solution.
  Conversely, if $f$ satisfies the weak KPP hypothesis \ref{hyp:weak} and $\lambda(\Omega, \const) > f'(0),$ then \eqref{eq:main} has no positive bounded solution.
  If, in addition, $f$ is strong-KPP in the sense of \ref{hyp:strong} and $\Omega$ is bounded or periodic, then \eqref{eq:main} has no positive bounded solution when $\lambda(\Omega, \const) = f'(0).$
\end{theorem}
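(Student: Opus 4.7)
The plan is to treat the three assertions in turn. For \emph{existence} when $\lambda(\Omega,\const) < f'(0)$, I would use the classical sub/supersolution method combined with a smooth exhaustion of $\Omega$. The constant $1$ is a supersolution by hypothesis on $f$. For the subsolution, let $(\Omega_n)$ be bounded subdomains increasing to $\Omega$ that share the relevant portion of $\partial\Omega$ (with the boundary operator $\m{N}_\const$) and carry Dirichlet data on the artificial boundary. By monotonicity of the generalized principal eigenvalue (encoded in Proposition~\ref{prop:lambda}), $\lambda_n \coloneqq \lambda(\Omega_n,\const)$ decreases to $\lambda(\Omega,\const) < f'(0)$. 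For large $n$, $\lambda_n$ is attained by a positive eigenfunction $\phi_n$, and because $f(s)/s \to f'(0) > \lambda_n$ as $s \to 0^+$, the function $\varepsilon \phi_n$ is a strict subsolution of \eqref{eq:main} on $\Omega_n$ for sufficiently small $\varepsilon > 0$. Monotone iteration between $\varepsilon\phi_n$ and $1$ yields a solution $u_n$ on $\Omega_n$. Schauder estimates extract a locally uniform subsequential limit $u$ on $\Omega$ that solves the PDE; the uniform $\m{C}^{2,\gamma}$ regularity of $\partial\Omega$ supplies barriers recovering $\m{N}_\const u = 0$, and the strong maximum principle guarantees $u > 0$.

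For the \emph{non-existence} half under \ref{hyp:weak} when $\lambda(\Omega,\const) > f'(0)$, a one-step refined maximum principle argument suffices. Any positive bounded solution $u$ satisfies
\begin{equation*}
  \bigl(-\Delta - f'(0)\bigr) u = f(u) - f'(0) u \leq 0 \text{ in } \Omega, \qquad \m{N}_\const u = 0 \text{ on } \partial\Omega.
\end{equation*}
Since $\lambda\bigl(-\Delta - f'(0), \Omega, \const\bigr) = \lambda(\Omega,\const) - f'(0) > 0$, the refined maximum principle of \cite{BNV} attached to the generalized principal eigenvalue forces $u \leq 0$, contradicting $u > 0$.

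Finally, for the critical case $\lambda(\Omega,\const) = f'(0)$ under \ref{hyp:strong}, the ground state is attained by a positive eigenfunction $\phi$ with $-\Delta \phi = \lambda \phi$ and $\m{N}_\const \phi = 0$; on periodic $\Omega$ one may take $\phi$ periodic. For $\Omega$ bounded, I would test the equations against $\phi$ and $u$ respectively and integrate by parts. The boundary terms cancel because $u$ and $\phi$ obey the same linear boundary operator, giving
\begin{equation*}
  \int_\Omega \bigl( f(u) - \lambda u \bigr) \phi \, dx = 0.
\end{equation*}
By \ref{hyp:strong}, $f(u) - \lambda u < 0$ wherever $u > 0$, and $\phi > 0$, so the integrand is strictly negative---a contradiction. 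The periodic case is the main obstacle, since $u$ need not be periodic. I would handle it via a Picone-type inequality: testing $-\Delta u = f(u)$ against $\psi^2/u$ with $\psi = \phi\chi_R$ and $\chi_R$ a smooth cutoff of $B_R$ produces
\begin{equation*}
  \int_\Omega \Bigl( \lambda - \frac{f(u)}{u} \Bigr) \phi^2 \chi_R^2 \, dx \leq C \int_\Omega \phi^2 |\nabla \chi_R|^2 \, dx = O(R^{d-2}),
\end{equation*}
using periodicity of $\phi$ and $|\nabla \chi_R| \lesssim R^{-1}$. By \ref{hyp:strong}, the left side should grow like $R^d$ provided one can ensure that $u$ stays bounded below on a set of positive density per lattice cell; the resulting contradiction as $R \to \infty$ closes the argument. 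Obtaining such a lower bound---equivalently, ruling out the degenerate case $\inf_\Omega u = 0$ via a translation and blow-up argument combined with the strong maximum principle on a periodic limit---is the key remaining difficulty.
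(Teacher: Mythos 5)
For the first two assertions your approach is essentially the paper's. The existence part is a sub/super\-solution argument with a positive eigenfunction; the paper does this a bit more economically by intersecting $\Omega$ with a single ball $B$ chosen via Proposition~\ref{prop:lambda}\ref{item:exhaustion} so that $\lambda(\Omega \mid B, \const) < f'(0)$, then flowing the subsolution $\eps\varphi$ upward parabolically. Your exhaustion $(\Omega_n)$ works as well, but you should be more careful when passing to the limit: you need to ensure the locally-uniform limit $u$ of the $u_n$ is not identically zero before invoking the strong maximum principle. This is easily repaired by noting that for a fixed $N$ with $\lambda_N < f'(0)$, all $u_n$ with $n \geq N$ dominate the same subsolution $\eps_N\phi_N$ on $\Omega_N$; the paper's single-ball construction sidesteps the issue cleanly. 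The non-existence argument under \ref{hyp:weak} is the same idea: the paper packages the refined maximum principle as its Proposition~\ref{prop:MP} rather than citing \cite{BNV} directly, but the content is identical. Your bounded critical case (integrating against the eigenfunction) is substantively equivalent to the paper's Rayleigh-quotient argument with $u$ as test function.

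The genuine gap is the \emph{periodic} critical case, which you yourself flag as unresolved. Your Picone-type cutoff argument would require a uniform positive lower bound on $u$ per cell, and you correctly note that nothing rules out $\inf_\Omega u = 0$ a priori; in fact when $\lambda(\Omega,\const)=f'(0)$ one \emph{expects} decay, so this cannot be dismissed. The paper's resolution avoids this entirely by a simple but decisive observation: since $1$ is a periodic supersolution and $\Omega$ is periodic, the parabolic flow preserves periodicity, so $u \coloneqq \m{P}_\infty 1$ is a positive bounded \emph{periodic} solution of \eqref{eq:main} sandwiched between the supposed solution $v$ and $1$. One then descends to the compact quotient $\Gamma = \Omega/\Lambda$, where $\lambda(\Omega,\const)$ is the classical principal eigenvalue (this uses Theorem~1.7 and Proposition~2.5 of \cite{BR}), and repeats the bounded-domain Rayleigh-quotient argument with $u$ as a test function on $\Gamma$. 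No cutoff, no density estimate, no blow-up analysis is needed. You should replace your Picone attempt with this reduction to a periodic solution on the quotient.
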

Together with Theorem~\ref{thm:uniqueness}, this completes the classification of bounded steady states in periodic domains.
\begin{corollary}
  \label{cor:periodic}
  If $f$ is strong-KPP and $\Omega$ is periodic, then \eqref{eq:main} has no positive bounded solution when $\lambda(\Omega, \const) \geq f'(0)$ and exactly one such solution when $\lambda(\Omega, \const) < f'(0)$.
\end{corollary}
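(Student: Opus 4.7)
The plan is to combine Theorems~\ref{thm:uniqueness} and~\ref{thm:existence}, using the observation that a connected periodic domain has a singleton principal limit spectrum. Let $\Lambda \subset \R^d$ be a full-rank lattice of symmetries of $\Omega$ and fix a bounded fundamental domain $D$. For any sequence $(x_n)_n \subset \Omega$, decompose $x_n = \ell_n + y_n$ with $\ell_n \in \Lambda$ and $y_n \in D$; periodicity gives $\Omega - x_n = \Omega - y_n$. Passing to a subsequence so that $y_n \to y^* \in \bar{D}$, the shifted domains $\Omega - x_n$ converge to $\Omega - y^*$. Since $\Omega$ is connected, every translate is connected, so the connected limit $\Omega^*$ along $(x_n)_n$ is a translate of $\Omega$. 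By translation invariance of $-\Delta$ and of $\m{N}_\const$, we have $\lambda(\Omega^*, \const) = \lambda(\Omega, \const)$, whence $\bar{\Sigma}(\Omega, \const) = \{\lambda(\Omega, \const)\}$.

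The corollary then follows by cases. If $\lambda(\Omega, \const) > f'(0)$, the second clause of Theorem~\ref{thm:existence} rules out positive bounded solutions of \eqref{eq:main}. If $\lambda(\Omega, \const) = f'(0)$, the final clause of the same theorem---which applies precisely because $f$ is strong-KPP and $\Omega$ is periodic---again rules them out. If $\lambda(\Omega, \const) < f'(0)$, the first clause of Theorem~\ref{thm:existence} yields at least one positive bounded solution, while the previous paragraph gives $f'(0) \neq \lambda(\Omega, \const)$, i.e.\ $f'(0) \notin \bar{\Sigma}(\Omega, \const)$, so Theorem~\ref{thm:uniqueness} supplies uniqueness.

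The only substantive step is the spectrum computation, and the only delicate point there is the case when $y^*$ lies on $\partial D$ and the origin accordingly lands on $\partial(\Omega - y^*)$. This is a technicality dictated by the precise conventions for convergence of domains and for the connected component containing the origin in Definitions~\ref{def:limit} and~\ref{def:connected-limit}; once those are unpacked, the lattice reduction goes through as written and the remainder of the argument is a straightforward assembly of the two main theorems.
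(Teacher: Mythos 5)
Your proof is correct and follows the paper's own argument: identify $\bar\Sigma(\Omega,\const)=\{\lambda(\Omega,\const)\}$ for a connected periodic domain (via lattice reduction of translates), then invoke the three clauses of Theorem~\ref{thm:existence} together with Theorem~\ref{thm:uniqueness}. Your treatment is a slightly more detailed unpacking of the same two-step assembly; there is no substantive difference.
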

Theorem~\ref{thm:existence} is analogous to existing results for variable-coefficient operators in $\R^d$; see Theorem 1.3(1) and Theorem 6.1 in \cite{BHRossi}.
In all these results, the critical case of equality is subtle and largely open.
If $\lambda(\Omega, \const) = f'(0)$ and $f(s) \equiv f'(0)s$ in a nontrivial interval containing $0$, then \eqref{eq:main} may admit positive bounded solutions in the form of principal eigenfunctions.
In this case, any small multiple of the eigenfunction solves \eqref{eq:main}, so uniqueness fails.
However, the strong KPP hypothesis precludes this structure, and in bounded domains with $\lambda(\Omega, \varrho) = f'(0)$, there are no positive bounded solutions.
We expect unbounded critical domains to exhibit the same behavior:
\begin{conjecture}
  \label{conj:critical-nonexistence}
  If $f$ satisfies \ref{hyp:strong} and $\lambda(\Omega, \const) = f'(0),$ then \eqref{eq:main} has no positive bounded solution.
\end{conjecture}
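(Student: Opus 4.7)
I would argue by contradiction. Suppose $u$ is a positive bounded solution of \eqref{eq:main} and let $\phi > 0$ be a generalized principal eigenfunction satisfying $-\Delta \phi = f'(0) \phi$ with $\m{N}_\const \phi = 0$; such a $\phi$ exists under the $\m{C}^{2,\gamma}$ hypothesis on $\Omega$ (cf.\ \cite{BR}). Set $v \coloneqq u/\phi$ and compute
\begin{equation*}
  \nabla \cdot \big(\phi^2 \nabla v\big) = \phi\, \Delta u - u\, \Delta \phi = \phi\big(f'(0) u - f(u)\big),
\end{equation*}
which is strictly positive throughout $\Omega$ by \ref{hyp:strong}. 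Hence $v$ is a strict subsolution of a weighted Laplacian. Under Robin or Neumann conditions, the relations $\m{N}_\const u = \m{N}_\const \phi = 0$ yield $\partial_\nu v = 0$ on $\partial \Omega$; under Dirichlet conditions, the weight $\phi^2$ degenerates at $\partial \Omega$ and no boundary condition is needed. The strong maximum principle and Hopf lemma for the weighted operator then preclude $v$ from attaining its supremum in $\bar \Omega$: any such maximum would force $v$ to be locally constant, so $u = c \phi$ and $f(u) \equiv f'(0) u$ on $\{u > 0\}$, contradicting \ref{hyp:strong}.

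\textbf{Extraction of a connected limit.} Set $M \coloneqq \sup_\Omega v \in (0, \infty]$ and choose $(x_n)_n \subset \Omega$ with $v(x_n) \to M$. After passing to a subsequence, the translates $\Omega - x_n$ admit a connected limit $\Omega^*$ containing the origin. Normalize by $\tilde \phi_n(y) \coloneqq \phi(y + x_n)/\phi(x_n)$; Harnack and elliptic regularity yield $\tilde \phi_n \to \tilde \phi^* > 0$ in $\m{C}^2_{\mathrm{loc}}(\Omega^*)$ with $-\Delta \tilde \phi^* = f'(0) \tilde \phi^*$ and $\tilde \phi^*(0) = 1$. The case $M = 0$ gives $u \equiv 0$, while boundedness of $u$ combined with the asymptotic $u(x_n) \sim M \phi(x_n)$ rules out $\phi(x_n) \to \infty$. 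If $\phi(x_n)$ converges to a finite positive limit $\ell$, the translates $u(\cdot + x_n)$ converge to a bounded solution $u^*$ of $-\Delta u^* = f(u^*)$ on $\Omega^*$, and $v^* \coloneqq u^*/(\ell \tilde \phi^*)$ attains its supremum $M$ at the interior point $0 \in \Omega^*$. Rerunning the subsolution argument on $\Omega^*$ yields $u^* \equiv M \ell \tilde \phi^*$ and hence $f(u^*) \equiv f'(0) u^*$ on $\{u^* > 0\}$, again contradicting \ref{hyp:strong}.

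\textbf{The principal obstacle.} The remaining case $\phi(x_n) \to 0$ is where I expect the difficulty to concentrate. Rescaling $u$ by $\phi(x_n)$ produces a limit $\tilde u^*$ that satisfies only the \emph{linearized} equation $-\Delta \tilde u^* = f'(0) \tilde u^*$, because the strong-KPP defect $g(s) \coloneqq f'(0) - f(s)/s$ tends to $0$ as $s \to 0^+$, so the strict inequality driving the contradiction above disappears in the limit. A finer analysis is needed: identify the rate at which $g$ vanishes (e.g., $g(s) \asymp s^\alpha$ for some $\alpha > 0$ under $f \in \m{C}^{1,\gamma}$), rescale by $\phi(x_n)^{1+\alpha}$ or an analogous weight, and test against $\tilde \phi^*$ to extract a limiting integral inequality that remains strict. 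Carrying this out uniformly over all maximizing sequences, and reconciling it with the geometry of $\Omega^*$ at infinity, appears to be the crux of the conjecture; this is presumably why the authors confirm it only in the specific configuration of Section~\ref{sec:bulb}, where the decay of $\phi$ is explicitly controlled.
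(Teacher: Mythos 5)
This statement is Conjecture~\ref{conj:critical-nonexistence}: the paper explicitly leaves it \emph{open} (``we are not presently able to rule out this possibility'') and proves it only in the bulb configuration of Section~\ref{sec:bulb}. The paper's partial evidence, at the end of Section~\ref{sec:existence}, is two observations: $L^2$ solutions are excluded because the strict Rayleigh-quotient inequality \eqref{eq:strict-quotient} would force $\lambda(\Omega, \const) < f'(0)$, and solutions that do not vanish locally uniformly along any subsequence are excluded by a cutoff argument like the one in Proposition~\ref{prop:Neumann-zero}. What remains open is exactly a solution that decays along some subsequence but is not square integrable.

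Your proposal correctly does not claim to close the conjecture, and the obstacle you isolate is the right one. Your route differs from the paper's partial evidence: you maximize the ratio $v = u/\phi$, exploit the strict sign of $\nabla \cdot (\phi^2 \nabla v) = \phi\big(f'(0)u - f(u)\big) > 0$, and pass to a connected limit along a maximizing sequence. When $\phi(x_n)$ stays bounded away from $0$, the strong maximum principle on the limit forces $f(u^*) \equiv f'(0) u^*$, a contradiction with \ref{hyp:strong}. But when $\phi(x_n) \to 0$ — precisely when $u$ decays along the maximizing sequence — the rescaled limit only solves the \emph{linearized} equation and the strictness evaporates, exactly as you say. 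This is the same phenomenon that blocks the paper's argument; your ratio-maximization approach and the paper's Rayleigh-quotient/cutoff approach reach a common impasse, and it is precisely the setting that Section~\ref{sec:bulb} handles by computing the decay rate of $\phi$ explicitly.

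Two technical caveats worth flagging. First, in the Dirichlet case $\const = 0$ the weight $\phi^2$ vanishes on $\partial\Omega$, so the ``strong maximum principle and Hopf lemma for the weighted operator'' need to be justified for a degenerate-elliptic operator; the paper's own bounded-domain argument (Theorem~\ref{thm:bounded}) avoids this by working with $\ubar\mu = \inf\{\mu : u_1 \leq \mu u_2\}$ and the Hopf lemma for the \emph{undegenerate} linearized operator, a patch you could adopt. Second, your argument is purely pointwise and does not recover the paper's $L^2$ exclusion, which is a genuinely separate piece of evidence obtained variationally; incorporating an integral estimate against $\phi$ as a weight (your own suggestion in the final paragraph) is probably the way to merge the two and is a sensible direction for attacking the conjecture.
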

When \eqref{eq:main} admits a unique positive bounded solution $u$, we expect nontrivial solutions of the corresponding parabolic problem to converge to $u$ at large times.
Let $v \in \m{C}^2([0, \infty) \times \Omega; \R)$ solve
\begin{equation}
  \label{eq:parabolic}
  \begin{cases}
    \partial_t v = \Delta v + f(v) &\text{in } \Omega,\\
    \m{N}_\const v = 0 &\text{on } \partial \Omega\\
  \end{cases}
\end{equation}
with strong-KPP $f$.
Then $v$ satisfies the so-called ``hair-trigger effect.''
\begin{proposition}
  \label{prop:hair-trigger}
  Suppose $f$ is positive in the sense of \ref{hyp:positive} and $\lambda(\Omega, \const) < f'(0)$.
  Then \eqref{eq:main} admits a minimal positive bounded solution $u$ such that every positive bounded solution $\ti u$ satisfies $\ti u \geq u$.
  Moreover, if $v$ solves \eqref{eq:parabolic} with nonnegative bounded initial data $v(0, \anon) \not\equiv 0$, then
  \begin{equation*}
    \liminf_{t \to \infty} v(t, \anon) \geq u.
  \end{equation*}
  Suppose, in addition, that $f$ is strong-KPP in the sense of \ref{hyp:strong} and $f'(0) \not \in \bar\Sigma(\Omega, \const)$, so $u$ is the unique positive bounded solution.
  Then
  \begin{equation*}
    v(t, \anon) \to u
  \end{equation*}
  locally uniformly in $\bar\Omega$ as $t \to \infty$.
\end{proposition}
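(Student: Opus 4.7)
The plan is to establish the three assertions in sequence using monotone iteration and parabolic comparison, with the strong-KPP uniqueness of Theorem~\ref{thm:uniqueness} supplying the final pincer. First, to build the minimal solution, the hypothesis $\lambda(\Omega,\const)<f'(0)$ together with the characterization of the generalized principal eigenvalue (Proposition~\ref{prop:lambda}) furnishes a smooth bounded subdomain $\omega\Subset\Omega$ whose Dirichlet principal eigenvalue $\mu$ satisfies $\mu<f'(0)$; let $\phi>0$ be the corresponding eigenfunction, extended by zero to $\Omega$. Since $f(s)/s\to f'(0)>\mu$ as $s\to 0^+$, the function $\epsilon\phi$ is a (weak) subsolution of \eqref{eq:main} for all small $\epsilon>0$, while $1$ is a supersolution. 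Initializing the shifted scheme $(-\Delta+K)u_{n+1}=Ku_n+f(u_n)$ at $u_0=\epsilon\phi$ with large $K$ and imposing $\m{N}_\const u_n=0$, elliptic regularity and Arzel\`a--Ascoli produce an increasing limit $u$ solving \eqref{eq:main}. Minimality follows because any positive bounded steady state $\ti u$ dominates $\epsilon\phi$ on $\bar\omega$ (by the strong maximum principle and Hopf lemma) for $\epsilon$ small, and the scheme preserves $u_n\leq\ti u$.

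For the second assertion I would apply the strong parabolic maximum principle to $v$: for any $t_0>0$, $v(t_0,\anon)\geq\epsilon\phi$ on $\Omega$ for some $\epsilon>0$. Let $u_\epsilon$ denote the solution of \eqref{eq:parabolic} with initial datum $\epsilon\phi$. Since $\epsilon\phi$ is an elliptic subsolution, $u_\epsilon(t,\anon)$ is nondecreasing in $t$, stays $\leq 1$, and its locally uniform $t\to\infty$ limit $U$ is a positive bounded steady state. By minimality $U\geq u$, while $U\leq u$ because $\epsilon\phi\leq u$ (after possibly shrinking $\epsilon$) combined with parabolic comparison against the stationary $u$; hence $U=u$. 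The parabolic comparison $v(t_0+t,\anon)\geq u_\epsilon(t,\anon)$ then yields $\liminf_{t\to\infty}v(t,\anon)\geq u$.

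For the final assertion, Theorem~\ref{thm:uniqueness} identifies $u$ as the unique positive bounded steady state. To obtain the matching limsup, I would compare $v$ with the ODE $\dot w=f(w)$ at $w(0)=\max\bigl(1,\|v(0,\anon)\|_\infty\bigr)$; since $f<0$ on $(1,\infty)$, $v(t,\anon)\leq w(t)\to 1$, so $v(t_1,\anon)\leq 1+\delta$ for preassigned $\delta>0$ and some $t_1$. The constant $1+\delta$ is a steady supersolution for every $\const\in[0,1]$, so the parabolic flow $\bar v_\delta$ starting from $1+\delta$ is nonincreasing in $t$, remains $\geq u$ (by comparison with stationary $u\leq 1$), and converges locally uniformly to a positive bounded steady state---namely $u$, by uniqueness. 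Parabolic comparison gives $\limsup_{t\to\infty}v(t,\anon)\leq u$, and combining with the liminf bound yields $v(t,\anon)\to u$ pointwise. Parabolic Schauder estimates upgrade this to locally uniform convergence in $\bar\Omega$.

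The main technical obstacle is the rigorous treatment of $\epsilon\phi$ (extended by zero) as a subsolution of \eqref{eq:main}: the extension has a jump in its normal derivative across $\partial\omega$, so one must either work in the weak/distributional framework implicit in the definition \eqref{eq:lambda} or cut $\phi$ off smoothly inside $\omega$ without breaking the subsolution property. Analogous care is required for the Hopf lemma near $\partial\omega\cap\partial\Omega$ under Robin conditions, and for verifying that the parabolic comparison principle applies in the unbounded setting with $\m{N}_\const$ boundary data. These points are standard but constitute the technical backbone of the comparison arguments used throughout.
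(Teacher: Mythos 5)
The construction of the subsolution is the weak point. You take $\omega \Subset \Omega$ a compactly-contained subdomain with \emph{Dirichlet} principal eigenvalue $\mu < f'(0)$. For $\const = 0$ this is essentially fine. For $\const > 0$, however, such an $\omega$ need not exist: by Proposition~\ref{prop:lambda}\ref{item:monotone}, every $\omega \Subset \Omega$ satisfies $\lambda(\omega, 0) \geq \lambda(\Omega, 0)$, and the latter can far exceed $f'(0)$ even when $\lambda(\Omega, \const) < f'(0)$. Concretely, take $\Omega = \R \times (0, L)$ a thin cylinder with $\const = 1$ (Neumann). Then $\lambda(\Omega, 1) = 0 < f'(0)$ by Proposition~\ref{prop:Neumann-zero}, yet $\lambda(\omega, 0) \geq \pi^2/L^2 \gg f'(0)$ for every $\omega \Subset \Omega$ once $L$ is small. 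So your subsolution $\epsilon\phi$ simply does not exist in this case, and the whole construction collapses. The paper's version sidesteps this: it applies Proposition~\ref{prop:lambda}\ref{item:exhaustion} to find $R$ with $\lambda(\Omega \mid B_R, \const) < f'(0)$, i.e.\ the truncated domain $\Omega \cap B_R$ equipped with the original $\const$-condition on $\partial\Omega \cap B_R$ and Dirichlet only on the artificial boundary $\Omega \cap \partial B_R$. The eigenfunction extended by zero across $\Omega \cap \partial B_R$ is then a genuine weak subsolution that still satisfies $\m{N}_\const$ on the true boundary. You in fact gesture at this in your closing paragraph (``Analogous care is required for the Hopf lemma near $\partial\omega\cap\partial\Omega$ under Robin conditions''), but that sentence is inconsistent with $\omega \Subset \Omega$, which forces $\partial\omega \cap \partial\Omega = \emptyset$; the fix is not to take $\omega$ compactly contained at all but rather to use the mixed boundary-condition cut $\Omega \cap B_R$.

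Once that is repaired, the rest of your argument is sound and parallel to the paper's. The differences are cosmetic: you iterate the shifted scheme $(-\Delta + K)u_{n+1} = Ku_n + f(u_n)$ where the paper uses the parabolic semigroup $\m{P}_t$; for the upper bound you route through the ODE $\dot w = f(w)$ to force $v(t_1, \anon) \leq 1 + \delta$ and then flow down from the constant supersolution $1 + \delta$, while the paper compares directly with $\m{P}_\infty M = \m{P}_\infty 1 = u_+$ and invokes uniqueness. Both approaches close the pincer in the same way, via Theorem~\ref{thm:uniqueness} and standard parabolic estimates for the locally uniform convergence.
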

This proposition raises the question of the asymptotic speed of propagation: how rapidly does the disturbance spread through the domain?
This matter was addressed in \cite{BHN10} for KPP reactions with Neumann boundary conditions.
The nature of spreading in general domains with Dirichlet or Robin conditions remains open.

\subsection*{Related works.}
As indicated earlier, our investigation is part of a very broad study of reaction--diffusion dynamics.
Here, we review a selection of works that pertain to our own.
Naturally, our discussion is by no means comprehensive.

\subsubsection*{Homogeneous problems}
In this paper, we consider steady states in subsets of $\R^d$.
In $\R^d$ itself, $1$ is a solution due to the absence of boundary.
For positive reactions, the ``hair-trigger effect'' first observed by Aronson and Weinberger~\cite{AW} ensures that $1$ is the unique positive bounded steady state.
Other reaction classes can exhibit far more elaborate behavior.
For instance, steady states of bistable reactions on the whole space are the subject of the celebrated de Giorgi conjecture~\cite{deGiorgi}.
See~\mbox{\cite{Savin, DPKW}} for landmark results in this direction.

\subsubsection*{Bounded domains}
 Rabinowitz~\cite{Rabinowitz} showed uniqueness for equations of the form $-\m{L}u = f(u)$ for self-adjoint $\m{L}$ and strong-KPP $f$ on bounded domains.
His argument pervades this paper.
The first author later extended uniqueness to non-self-adjoint operators~\cite{Berestycki}.
We emphasize that in such generality, the result is false in unbounded domains.
To see this, recall that KPP equations on the line admit traveling wave solutions that move with positive speed.
If we shift into a co-moving frame, such a wave becomes a positive bounded steady-state in addition to $1$.
Crucially, the shifted operator includes a first-order term and is not self-adjoint.
It seems likely that some form of Theorem~\ref{thm:uniqueness} remains true for more general self-adjoint operators.
For further discussion along this line, see \cite{BHRossi}.

\subsubsection*{Unbounded domains}
The study of steady states on unbounded domains has largely focused on domains with particular structure.
In extensive collaboration with Caffarelli and Nirenberg, the first author has examined qualitative properties of elliptic solutions on half-spaces~\cite{BCN93}, cylinders~\cite{BCN96}, and epigraphs~\cite{BCN97b}.
Although uniqueness was not the principal focus of these works, \cite{BCN96} establishes uniqueness for strong-KPP reactions on cylinders and \cite{BCN97b} shows uniqueness for positive reactions on uniformly Lipschitz epigraphs.
We also highlight a collaboration~\cite{BR} of Rossi and the first author, which analyzes the linear problem in general unbounded domains.
We draw heavily on characterizations of the generalized principal eigenvalue from~\cite{BR}.

Under Neumann boundary conditions, the classification of steady states is somewhat simpler, as $1$ is a bounded solution of \eqref{eq:main}.
Several prior works have touched on Neumann uniqueness with weak-KPP~\cite{BHN10} or positive reactions~\cite{Rossi}.
As we discussed earlier, the former treats domains satisfying a certain geometric constraint and the latter removes the constraint, thereby proving uniqueness on general (possibly unbounded) domains.

\subsubsection*{Inhomogeneous equations}
Boundary conditions are one way to model the effects of inhomogeneous environments.
Of course, one can also study inhomogeneous \emph{equations} on the whole space.
In collaboration with Hamel and Rossi, the first author has studied the uniqueness of steady states in such systems~\cite{BHRossi}.
This work is a major inspiration for our own; it establishes uniqueness under certain conditions on the generalized principal eigenvalues of limiting problems.
Their conditions are more restrictive than ours, however.
In our language, the main Theorem~1.3(2) of \cite{BHRossi} is analogous to uniqueness when $\Sigma(\Omega, \const) \subset [0, f'(0)).$
Thus \cite{BHRossi} does not discuss systems with limit eigenvalues both above and below $f'(0)$.
It would be interesting to see whether a more general condition like $f'(0) \not \in \bar\Sigma(\Omega, \const)$ could ensure uniqueness in the variable-coefficient setting.

\subsubsection*{Homogeneous evolution}
As noted earlier, one can view solutions of \eqref{eq:main} as steady states of time-dependent reaction--diffusion equations.
Transitions from one steady state to another have featured prominently in the theory of such parabolic equations.
In the homogeneous KPP case, $0$ is unstable and $1$ is stable.
Any initial disturbance will trigger a transition from $0$ to $1$ that spreads in space at the asymptotic speed of propagation introduced by Aronson and Weinberger~\cite{AW}.

\subsubsection*{Evolution in periodic media}
The parabolic theory of inhomogeneous systems is dominated by the study of periodic equations in periodic domains.
In such systems, the asymptotic speed of propagation may depend on the direction.
These speeds were characterized in increasing generality by Freidlin and G\"artner~\cite{FG} and Weinberger~\cite{Weinberger}.
Hamel, Nadirashvili, and the first author have examined detailed properties of the speeds for weak-KPP reactions~\cite{BHN05}.
In collaboration with Hamel and Roques~\cite{BHRoques-1}, the first author has studied steady states and spreading properties in periodic media. 
In this case, the strong-KPP condition implies the uniqueness of steady states.

\subsubsection*{General inhomogeneous evolution}
The literature on aperiodic systems is comparatively sparse.
In~\cite{BHN10}, Hamel, Nadirashvili, and the first author consider the asymptotic speed of propagation of KPP reactions in general domains with Neumann boundary data.
Under geometric conditions on the domain, they show that $1$ is the unique steady state and that solutions propagate at the usual linearly-determined speed in ``free'' directions that avoid the boundary.
Because it treats Neumann data, this work does not incorporate the boundary absorption that we emphasize in the present paper.

A handful of results are known for specific domains with Dirichlet data.
Luo and Lu~\cite{LL} have established the uniqueness of steady states and the asymptotic speed of propagation for KPP reactions on cones.
For analogous results on half-spaces with a variety of boundary conditions and reactions, see our prior work~\cite{BG}.

Hamel, Nadin, and the first author have studied general time-dependent inhomogeneous reactions in the whole space~\cite{BHNadin}.
When the reaction is strong-KPP, they prove the uniqueness of a uniformly-positive entire solution that is analogous to our steady states.
In a further collaboration, Nadin and the first author have extensively studied weak-KPP spreading in quite general inhomogeneous media~\cite{BNadin}.

There is also a growing body of work on spreading in random media, which falls within the broader theory of stochastic homogenization.
For results in this direction, we direct the reader to work of Lin and Zlato\v{s}~\cite{LZ} and the references therein.

\subsection*{Organization.}
The present paper is organized as follows.
In Section~\ref{sec:lambda}, we establish fundamental properties of the generalized principal eigenvalue $\lambda(\Omega, \const)$.
Drawing on these preliminaries, we prove a maximum principle and Theorem~\ref{thm:existence} in Section~\ref{sec:existence}.
We devote Section~\ref{sec:uniqueness} to the decomposition in Theorem~\ref{thm:decomp} and our uniqueness results Theorem~\ref{thm:uniqueness} and Proposition~\ref{prop:nonuniqueness}.
Finally, we treat a special example with critical limit eigenvalue in Section~\ref{sec:bulb}.
An appendix puts our notions of smoothness and convergence for domains on rigorous footing.

\section{Properties of the principal eigenvalue}
\label{sec:lambda}

In this section, we discuss the essential properties of the generalized principal eigenvalue.
We consider elliptic operators of the form
\begin{equation}
  \label{eq:elliptic}
  \m{L} \coloneqq \Delta + q(x)
\end{equation}
with a uniformly $\m{C}^\gamma$ potential $q$.
We let $\m{C}_{\mathrm{b}}^\gamma(\Omega)$ denotes the space of uniformly $\m{C}^\gamma$ functions on $\Omega$.
This naturally generalizes to $\m{C}_{\mathrm{b}}^{k,\al}(\Omega)$.

In the introduction, we confined our attention to constant boundary conditions.
However, as we cut our domains into pieces, we may employ Robin conditions on some parts of the boundary and Dirichlet conditions on others.
We encompass such mixed boundary conditions through variable boundary parameters ${\var \colon \partial \Omega \to [0, 1]}$.
Unless otherwise stated, we will assume that $\var$ is uniformly $\m{C}^{2,\gamma}$.
We often abbreviate and refer to the pair $(\Omega, \var)$ as a uniformly $\m{C}^{2,\gamma}$ domain.
We let $\m{N}_\var$ denote the corresponding mixed boundary operator as in \eqref{eq:boundary-op}.
Then the definition \eqref{eq:lambda} of the generalized principal eigenvalue extends to mixed boundary conditions.
Throughout the paper, we use $\const$ and $\var$ to denote constant and mixed parameters, respectively.

We often wish to compare $\lambda(\Omega, \var)$ with the principal eigenvalue on a subset ${\Omega' \subset \Omega}$.
Under Dirichlet conditions, this is immediate: $\lambda(\Omega', 0) \geq \lambda(\Omega, 0)$.
Unfortunately, Robin and Neumann conditions do not yield such monotonicity.
Nonetheless, domain monotonicity does hold if we employ suitable boundary conditions on $\Omega'$.
To capture this phenomenon, we define a partial order on pairs $(\Omega, \var)$ of domains and boundary parameters.
\begin{definition}
  \label{def:inclusion}
  We write $(\Omega', \var') \preceq (\Omega, \var)$ if $\Omega' \subset \Omega$, $\var' \leq \var$ on the common boundary $\partial \Omega' \cap \partial \Omega$, and $\var' = 0$ on $\partial \Omega' \cap \Omega$.
\end{definition}
\noindent
We shall see below that $\lambda(\Omega', \var') \geq \lambda(\Omega, \var)$ when $(\Omega', \var') \preceq \lambda(\Omega, \var)$.
Note the reversed order.

As a special case, we frequently consider the intersection between $\Omega$ and a ball $B$.
Motivated by Definition~\ref{def:inclusion}, we impose $\var$-conditions on the ``original'' boundary $\partial \Omega \cap B$ and Dirichlet conditions on the ``artificial'' boundary $\Omega \cap \partial B$.
We use the notation $\lambda(-\m{L}, \Omega \mid B, \var)$ to denote the corresponding generalized principal eigenvalue.
Thus
\begin{equation}
  \label{eq:cut}
  \lambda(-\m{L}, \Omega \mid B, \var) \coloneqq \lambda(-\m{L}, \Omega \cap B, \var \tbf{1}_{\partial \Omega \cap B}).
\end{equation}
Clearly, $(\Omega \cap B, \var \tbf{1}_{\partial \Omega \cap B}) \preceq (\Omega, \var)$ under Definition~\ref{def:inclusion}.

We observe that the domain $\Omega \cap B$ typically has corners and the mixed boundary condition $\var \tbf{1}_{\partial \Omega \cap B}$ is typically discontinuous.
This is our one routine exception to the assumption that $\Omega$ and $\var$ are uniformly $\m{C}^{2,\gamma}$.
Because $\Omega \cap B$ is bounded, the corresponding eigenvalue problem largely falls within established work on low-regularity domains.
We also note that $\Omega \cap B$ may be disconnected.
This is not a major hindrance---our definition of $\lambda$ extends to disconnected domains.
It is straightforward to check that the generalized principal eigenvalue of a disconnected domain is the infimum of the eigenvalues of the connected components.
Also, we adopt the convention that $\lambda = +\infty$ if the domain is empty.

We now turn to characterizations of the generalized principal eigenvalue.
Given $\var \colon \partial \Omega \to [0, 1]$, let $\m{H}_\var$ denote the closure of $\m{C}_0^\infty\big(\bar\Omega \setminus \{\var = 0\}\big)$ in $H^1(\Omega)$.
Informally, $\m{H}_\var$ is the set of $H^1$ functions that vanish on the ``Dirichlet part'' of $\partial \Omega$ where $\var = 0.$
This is the natural space of test functions for a variational formulation of $\lambda$.
In the following proposition, we collect various fundamental properties of $\lambda$.
\begin{proposition}
  \label{prop:lambda}
  Let $(\Omega, \var)$ be a uniformly $\m{C}^{2,\gamma}$ domain.
  Let $\m{L}$ be an elliptic operator of the form \eqref{eq:elliptic} with potential $q \in \m{C}_{\mathrm{b}}^\gamma(\Omega)$.
  Then the following hold.
  \vspace{1ex}
  \begin{enumerate}[label = \textup{(\roman*)}, itemsep = 2ex]
  \item
    \label{item:monotone}
    If $(\Omega', \var') \preceq (\Omega, \var)$, then $\lambda(\Omega', \var') \geq \lambda(\Omega, \var)$.

  \item
    \label{item:trivial-lambda-bd}
    Let $R_{\textnormal{in}}$ denote the (possibly infinite) inradius of $\Omega$.
    Then
    \begin{equation*}
      -\sup q \leq \lambda(-\m{L},\Omega,\var) \leq -\inf q + \lambda(B_1,0) R_{\textnormal{in}}^{-2}.
    \end{equation*}

  \item
    \label{item:exhaustion}
    For every $x \in \R^d$,
    \begin{equation}
      \label{eq:exhaustion}
      \lambda(-\m{L}, \Omega, \var) = \lim_{R \to \infty} \lambda(-\m{L}, \Omega \mid B_R(x), \var).
    \end{equation}

  \item
    \label{item:eigenfunction}
    There exists $\varphi \in \m{C}_{\mathrm{loc}}^{2,\gamma}\big(\bar\Omega\big)$ such that $\varphi > 0$ and
    \begin{equation}
      \label{eq:eigenfunction}
      -\m{L} \varphi = \lambda(-\m{L}, \Omega, \var) \varphi \And \m{N}_\var \varphi = 0.
    \end{equation}

  \item
    \label{item:variation}
    Recalling the space $\m{H}_\sigma$ from above, we have
    \begin{equation}
      \label{eq:variation}
      \lambda(-\m{L}, \Omega, \var) = \inf_{\phi \in \m{H}_\var \setminus \{0\}} \frac{\int_{\Omega} \left[\abs{\nab \phi}^2 - q \phi^2\right] + \int_{\partial \Omega} \var^{-1}(1 - \var) \phi^2}{\int_{\Omega} \phi^2}.
    \end{equation}

  \item
    \label{item:bounded}
    If $\Omega$ is bounded, then $\lambda(-\m{L}, \Omega, \var)$ is the classical principal eigenvalue of $-\m{L}$ on $\Omega$ with boundary parameter $\var$.
  \end{enumerate}
\end{proposition}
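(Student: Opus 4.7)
The plan is to establish the six items in an order matching their logical dependence: monotonicity (i) comes directly from the definition; the classical identification (vi) from existing Krein-Rutman theory; the a priori bounds (ii) from these two; exhaustion (iii) together with eigenfunction existence (iv) from a compactness argument on a nested family of bounded problems; and finally the variational formula (v) from (iii), (iv), and (vi) by approximation.

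For (i), given $\phi$ admissible in \eqref{eq:lambda} for $(\Omega,\var)$ at level $\mu$, I would show its restriction $\phi|_{\Omega'}$ is admissible for $(\Omega',\var')$. The interior inequality transfers automatically. On $\partial\Omega'\cap\Omega$ the hypothesis gives $\var'=0$ and $\m{N}_0\phi=\phi>0$; on $\partial\Omega'\cap\partial\Omega$ the outward normals coincide, and rewriting $\m{N}_\var\phi\geq0$ as $\partial_\nu\phi/\phi\geq(\var-1)/\var$ whenever $\var>0$, then relaxing $\var$ to $\var'\leq\var$, gives $\m{N}_{\var'}\phi\geq0$; the degenerate endpoints $\var\in\{0\}$ or $\var'=0$ are handled by $\phi>0$. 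For (vi) I would cite the classical result for bounded $\m{C}^{2,\gamma}$ domains under Dirichlet data in \cite{BNV} and its extension to mixed data in \cite{PS}; in particular (vi) supplies the standard Rayleigh-quotient formula and a positive classical eigenfunction on each bounded subdomain.

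With these in hand, (ii) is short. The constant $\phi\equiv1$ is admissible since $\m{N}_\var 1=1-\var\geq0$ and $-\m{L}\,1=-q$, giving the lower bound $-\sup q\leq\lambda(-\m{L},\Omega,\var)$. For the upper bound, fix $R<R_{\textnormal{in}}$ with $\bar B_R\subset\Omega$; then $(B_R,0)\preceq(\Omega,\var)$, so (i) yields $\lambda(-\m{L},\Omega,\var)\leq\lambda(-\m{L},B_R,0)$, and the Rayleigh quotient from (vi) applied to the Dirichlet ground state of $-\Delta$ on $B_R$ gives $\lambda(-\m{L},B_R,0)\leq\lambda(B_1,0)R^{-2}-\inf q$; sending $R\uparrow R_{\textnormal{in}}$ finishes (ii). For (iii)--(iv) I set $\lambda_R\coloneqq\lambda(-\m{L},\Omega\mid B_R(x),\var)$, which by (i) is nonincreasing in $R$ and bounded below by $\lambda(\Omega,\var)$, so $\lambda_\infty\coloneqq\lim_R\lambda_R$ exists and dominates $\lambda(\Omega,\var)$. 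Using (vi), pick a positive classical eigenfunction $\varphi_R$ of $-\m{L}$ on $\Omega\cap B_R$ with the mixed data $\var\mathbf{1}_{\partial\Omega\cap B_R}$, normalized by $\varphi_R(x_0)=1$. The uniform interior Harnack inequality together with boundary Schauder estimates---available on $\partial\Omega$ by the uniform $\m{C}^{2,\gamma}$ hypothesis and irrelevant on any fixed compact for $R$ large---extract a subsequential $\m{C}^{2,\gamma}_{\mathrm{loc}}(\bar\Omega)$ limit $\varphi>0$ solving \eqref{eq:eigenfunction} with eigenvalue $\lambda_\infty$. Then $\varphi$ is itself admissible in \eqref{eq:lambda} at level $\lambda_\infty$, closing $\lambda(\Omega,\var)=\lambda_\infty$ and simultaneously supplying (iv).

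For (v), let $\lambda^*$ denote the right-hand infimum. By (vi), $\lambda_R$ equals the Rayleigh infimum over the matching restricted test space; extending by zero embeds that space into $\m{H}_\var$, yielding $\lambda_R\geq\lambda^*$, and passing $R\to\infty$ via (iii) gives $\lambda(\Omega,\var)\geq\lambda^*$. Conversely, any $\phi\in\m{C}_0^\infty\big(\bar\Omega\setminus\{\var=0\}\big)$ sits in the test class of some $\lambda_R$, so its Rayleigh quotient is $\geq\lambda_R\geq\lambda(\Omega,\var)$, and density of such $\phi$ in $\m{H}_\var$ completes $\lambda^*\geq\lambda(\Omega,\var)$. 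I anticipate the main obstacle in (iii)--(iv): one must ensure that the Harnack and boundary regularity estimates on $\varphi_R$ are uniform in $R$ despite the nonsmooth interface $\partial\Omega\cap\partial B_R$ of $\Omega\cap B_R$. This is manageable because every compact subset of $\bar\Omega$ is eventually separated from that artificial sphere, so the standard $\m{C}^{2,\gamma}$ theory applies on each fixed compact with constants uniform in $R$, courtesy of the uniform $\m{C}^{2,\gamma}$ hypothesis on $(\Omega,\var)$.
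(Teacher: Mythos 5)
Your proposal is correct and follows essentially the same route as the paper: (i) by restriction of supersolutions, (ii) via the constant supersolution and inscribed balls, (iii)--(iv) simultaneously by exhaustion with principal eigenfunctions on $\Omega \cap B_R$, Harnack/Schauder compactness, and using the limit eigenfunction as a supersolution in \eqref{eq:lambda}, and (v)--(vi) from the classical bounded-domain theory plus approximation. The only (cosmetic) difference is the order of (v) and (vi) -- the paper obtains (v) from the bounded case by approximation and then (vi) from (v), while you invoke the classical identification first -- and your appeal to (vi) for the cornered domain $\Omega\cap B_R$ with discontinuous boundary parameter is asserted at the same level of detail as the paper's citation of \cite{BNV} ``despite the possible corners.''
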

\noindent
These properties are already known in the Dirichlet setting; see Proposition~2.3 of \cite{BR} and the references therein.
\begin{proof}
  \noindent
  \textbf{\ref{item:monotone}.}
  By Definition~\ref{def:inclusion}, a supersolution for $\lambda(\Omega,\const)$ in the sense of \eqref{eq:lambda} is also a supersolution for $\lambda(\Omega', \const')$.
  \smallskip

  \noindent
  \textbf{\ref{item:trivial-lambda-bd}.}
  The constant supersolution $\phi = 1$ in \eqref{eq:lambda} yields the lower bound.
  For the upper bound, let $B \subset \Omega$ be a ball of radius $R > 0$.
  Using \eqref{eq:lambda}, \ref{item:monotone}, and scaling, we find
  \begin{equation*}
    \lambda(-\m{L},\Omega,\var) \leq \lambda(-\m{L},B,0) \leq -\inf q + \lambda(B,0) = -\inf q + \lambda(B_1,0) R^{-2}.
  \end{equation*}
  Varying over all inscribed balls, we obtain \ref{item:trivial-lambda-bd}.
  \smallskip

  \noindent
  \textbf{\ref{item:exhaustion} and \ref{item:eigenfunction}.}
  By \ref{item:monotone}, the sequence $\lambda(-\m{L}, \Omega \mid B_R(x), \var)$ is decreasing in $R$ and
  \begin{equation}
    \label{eq:liminf-lambda}
    \lambda(-\m{L}, \Omega, \var) \leq \lim_{R \to \infty} \lambda(-\m{L}, \Omega \mid B_R(x), \var).
  \end{equation}
  In the Dirichlet case $\var \equiv 0$, \cite{BNV} shows the existence of a principal eigenfunction $\varphi_R > 0$ corresponding to $\lambda(-\m{L}, \Omega \mid B_R(x), \var)$, despite the possible corners in $\Omega \cap B_R$.
  Similar arguments apply in the Robin case; we omit the proof.
  Precisely, there exists $\varphi_R$ on $\Omega \cap B_R(x)$ such that
  \begin{equation*}
    -\m{L} \varphi_R = \lambda(-\m{L}, \Omega \mid B_R(x), \var) \varphi_R \quad \text{in } \Omega \cap B
  \end{equation*}
  and
  \begin{equation*}
    \m{N}_{\var \tbf{1}_{\partial \Omega \cap B_R(x)}} \varphi_R = 0 \quad \text{ almost everywhere on } \partial(\Omega \cap B_R(x)).
  \end{equation*}
  Assume without loss of generality that $0 \in \Omega$ and $R \geq \abs{x} + 1$ so that $0 \in \Omega \cap B_R(x)$.
  We normalize $\varphi_R$ by $\varphi_R(0) = 1$.
  Fix $K \Subset \bar{\Omega}$ and $R_K > 0$ such that $K \subset B_{R_K - 1}(x)$.
  Then Harnack and Schauder estimates up to the boundary imply that the family $(\varphi_R)_{R \geq R_K}$ is uniformly $\m{C}^{2,\gamma}$ smooth on $K$.
  Fix $\al \in (0, \gamma)$.
  By Arzel\`a--Ascoli and diagonalization, we can extract a subsequence of radii tending to infinity along which $\varphi_R$ converges locally uniformly in $\m{C}^{2,\al}$ to a limit $\varphi \in \m{C}^{2,\gamma}(\Omega)$.
  We can easily check that $\varphi$ is an eigenfunction in the sense of \eqref{eq:eigenfunction} with eigenvalue
  \begin{equation*}
    \lim_{R \to \infty} \lambda(-\m{L}, \Omega \mid B_R(x), \var).
  \end{equation*}
  Since $\varphi(0) = 1$ and $\varphi \geq 0$, the strong maximum principle implies that $\varphi > 0$ in $\Omega$.
  Using $\varphi$ as a supersolution in \eqref{eq:lambda}, we see that
  \begin{equation*}
    \lambda(-\m{L}, \Omega, \var) \geq \lim_{R \to \infty} \lambda(-\m{L}, \Omega \mid B_R(x), \var).
  \end{equation*}
  In combination with \eqref{eq:liminf-lambda}, this proves \eqref{eq:exhaustion} and \ref{item:eigenfunction}.
  \smallskip

  \noindent
  \textbf{\ref{item:variation}.}
  The variational formulation \eqref{eq:variation} is classical when $\Omega$ is bounded.
  The general form follows from \ref{item:exhaustion} and a standard approximation argument.
  \smallskip

  \noindent
  \textbf{\ref{item:bounded}.}
  It is well known that the classical principal eigenvalue is given by the Rayleigh quotient \eqref{eq:variation}, so \ref{item:bounded} follows from \ref{item:variation}.
\end{proof}
Many of our key arguments rely on various manipulations of the domain $\Omega$.
We often take limits or intersect domains with balls as in \eqref{eq:cut}.
The next two results control $\lambda(\Omega, \var) = \lambda(-\Delta, \Omega, \var)$ under these operations.

The precise sense in which a sequence of domains converges to another is technical and has a distinct flavor from the rest of the paper.
We therefore collect such details in Appendix~\ref{sec:smooth}.
Definition~\ref{def:limit} establishes our notion of locally uniform $\m{C}^{2,\al}$ convergence $(\Omega_n, \var_n) \to (\Omega_\infty, \var_\infty)$.
We supplement this definition with a corresponding compactness result.
Corollary~\ref{cor:compactness} ensures that any uniformly $\m{C}^{2,\gamma}$ sequence $(\Omega_n, \var_n)$ has a subsequence that converges in this sense (provided $\al < \gamma$).

We now turn to the notion of \emph{connected} limits.
Given $x \in \R^d$, define the translation operator
\begin{equation*}
  \tau_x \Omega \coloneqq \Omega - x \And \tau_x \var \coloneqq \var(\anon + x).
\end{equation*}
Then we define connected limits as follows.
\begin{definition}
  \label{def:connected-limit}
  Let $(\Omega_n, \var_n)_{n \in \N}$ be a uniformly $\m{C}^{2, \gamma}$ sequence of domains such that $0 \in \bar{\Omega}_n$.
  We say that $(\Omega^*, \var^*)$ is the \emph{connected limit} of $(\Omega_n, \var_n)_{n \in \N}$ if the sequence converges locally uniformly in $\m{C}^{2, \al}$ for some $\al \in (0, \gamma)$ to a domain $(\Omega_\infty, \var_\infty)$, $\Omega^*$ is the connected component of $\Omega_\infty$ such that $0 \in \bar \Omega^*$, and $\var^* = \var_\infty|_{\partial \Omega^*}$.
  We also say that $(\Omega^*, \var^*)$ is the connected limit of a single domain $(\Omega, \var)$ \emph{along } $(x_n)_{n \in \N} \subset \Omega$ if $(\Omega^*, \var^*)$ is the connected limit of $(\tau_{x_n}\Omega, \tau_{x_n}\var)_{n \in \N}$.
\end{definition}
If $(x_n)_{n \in \N}$ is \emph{any} sequence in $\Omega$, then the uniform $\m{C}^{2,\gamma}$-smoothness of $\Omega$ and Corollary~\ref{cor:compactness} ensure that the domains $(\tau_{x_n}\Omega, \tau_{x_n}\var)_{n \in \N}$ have a nonempty uniformly $\m{C}^{2,\gamma}$ subsequential limit $(\Omega^*, \var^*)$.
We note that the uniform interior ball condition on $\Omega$ prevents the limit from degenerating to $\emptyset$.

We extend the definition of the principal limit spectrum to $\Sigma(\Omega, \var)$ in the natural manner.
Note that every translate of $(\Omega, \var)$ to the origin is a connected limit along a constant sequence.
Therefore
\begin{equation}
  \label{eq:identity}
  \lambda(\Omega, \var) \in \Sigma(\Omega, \var).
\end{equation}
We now constrain the behavior of $\lambda$ under connected limits.
\begin{lemma}
  \label{lem:semicontinuous}
  Suppose $(\Omega^*, \var^*)$ is the connected limit of a uniformly $\m{C}^{2,\gamma}$ sequence $(\Omega_n, \var_n)_{n \in \N}$.
  Then
  \begin{equation}
    \label{eq:semicontinuous}
    \lambda(\Omega^*, \var^*) \geq \limsup_{n \to \infty} \lambda(\Omega_n, \var_n).
  \end{equation}
\end{lemma}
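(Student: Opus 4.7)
The plan is to reduce \eqref{eq:semicontinuous} to a continuity statement for $\lambda$ on bounded truncations and then exhaust. First, the exhaustion identity~\ref{item:exhaustion} of Proposition~\ref{prop:lambda} gives
\begin{equation*}
  \lambda(\Omega^*, \var^*) = \lim_{R \to \infty} \lambda(\Omega^* \mid B_R(0), \var^*),
\end{equation*}
so it is enough to fix $R > 0$ and prove that $\limsup_{n \to \infty} \lambda(\Omega_n, \var_n) \leq \lambda(\Omega^* \mid B_R(0), \var^*)$.

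Next I would sandwich the left-hand side using two applications of monotonicity~\ref{item:monotone}. The inclusion $(\Omega_n \cap B_R, \var_n \tbf{1}) \preceq (\Omega_n, \var_n)$ immediately yields $\lambda(\Omega_n, \var_n) \leq \lambda(\Omega_n \mid B_R, \var_n)$. On the limit side, since $\Omega^*$ is a connected component of $\Omega_\infty$ it is clopen in $\Omega_\infty$, whence $\partial \Omega^* \cap \Omega_\infty = \emptyset$; checking Definition~\ref{def:inclusion} then gives $(\Omega^* \cap B_R, \var^* \tbf{1}) \preceq (\Omega_\infty \cap B_R, \var_\infty \tbf{1})$, and therefore $\lambda(\Omega_\infty \mid B_R, \var_\infty) \leq \lambda(\Omega^* \mid B_R, \var^*)$. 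The lemma thus reduces to the bounded-truncation continuity statement
\begin{equation*}
  \limsup_{n \to \infty} \lambda(\Omega_n \mid B_R(0), \var_n) \leq \lambda(\Omega_\infty \mid B_R(0), \var_\infty).
\end{equation*}

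I would establish this via the variational formula~\ref{item:variation}. The locally uniform $\m{C}^{2,\alpha}$ convergence of Definition~\ref{def:limit} supplies, for large $n$, a $\m{C}^{2,\alpha}$ diffeomorphism $T_n$ from a neighborhood of $\overline{\Omega_\infty \cap B_R}$ onto a neighborhood of $\overline{\Omega_n \cap B_R}$, with $T_n \to \mathrm{id}$ in $\m{C}^{2,\alpha}$ and $\var_n \circ T_n \to \var_\infty$ on $\partial \Omega_\infty$ (in particular, the Dirichlet parts of the boundaries correspond under $T_n$). Taking the principal eigenfunction $\phi_\infty$ on $(\Omega_\infty \cap B_R, \var_\infty \tbf{1})$ from~\ref{item:eigenfunction} and setting $\phi_n \coloneqq (\phi_\infty \circ T_n^{-1}) \chi_n$ for a mild cutoff $\chi_n$ that vanishes on $\Omega_n \cap \partial B_R$ and equals $1$ a short distance inside, I obtain an admissible test function in $\m{H}_{\var_n \tbf{1}}$ on $\Omega_n \cap B_R$. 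Because $T_n \to \mathrm{id}$ smoothly and the boundary parameters converge, the Rayleigh quotient of $\phi_n$ tends to that of $\phi_\infty$, namely $\lambda(\Omega_\infty \mid B_R, \var_\infty)$, giving the required bound.

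The main obstacle is this last step: one must show that both the bulk integrals and the boundary integral $\int_{\partial \Omega_n \cap B_R} \var_n^{-1}(1 - \var_n) \phi_n^2$ appearing in~\eqref{eq:variation} transform correctly under the diffeomorphic pullback, and that the cutoff $\chi_n$ costs only $o(1)$ in the Rayleigh quotient. The latter uses a Hopf-type decay bound on $\phi_\infty$ near $\Omega_\infty \cap \partial B_R$ to control the thin collar where $\chi_n$ cuts off. Once this bounded-domain continuity is in hand, sending $R \to \infty$ and invoking~\ref{item:exhaustion} a final time completes the proof.
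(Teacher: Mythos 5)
Your overall strategy---exhaust by balls, use monotonicity twice, and reduce to continuity of $\lambda$ on bounded truncations---is sound, and it differs from the paper's argument. The paper instead invokes Proposition~\ref{prop:lambda}\ref{item:eigenfunction} to obtain global principal eigenfunctions $\varphi_n$ on each $\Omega_n$ normalized by $\varphi_n(0) = 1$, extracts a locally $\m{C}^{2,\al}$-convergent subsequence via Corollary~\ref{cor:compactness}, and uses the positive limit $\varphi^*$ as a supersolution in the \emph{supremum} definition~\eqref{eq:lambda}. That route bypasses the variational formula entirely.

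This matters because your bounded-truncation step has a genuine gap that the paper's route sidesteps. The parenthetical claim that ``the Dirichlet parts of the boundaries correspond under $T_n$'' does not follow from $\var_n \circ T_n \to \var_\infty$. If $\var_\infty$ vanishes on a set $E \subset \partial\Omega_\infty$, uniform convergence only forces $\var_n$ to be \emph{small} near $T_n(E)$, not zero there, and conversely $\{\var_n = 0\}$ need not sit inside $T_n(E)$. Consequently, $\phi_n = (\phi_\infty\circ T_n^{-1})\chi_n$ vanishes on $T_n(E)$ but not necessarily on $\{\var_n = 0\}$, so $\phi_n$ may fail to lie in $\m{H}_{\var_n}$ and is then not an admissible test function for~\eqref{eq:variation}. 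Even when it is admissible, the weight $\var_n^{-1}(1 - \var_n)$ can blow up on a region where $\var_n$ is merely small while $\phi_n$ is not yet zero, and controlling that boundary integral requires a quantitative (Hopf-type) decay of $\phi_\infty$ near $E$ together with a lower bound on $\var_n$ in terms of distance to $\{\var_n = 0\}$---none of which you supply. You can repair this by further cutting $\phi_n$ off in a thin collar around $\{\var_n = 0\}$ and estimating the cost as you do for the $\partial B_R$ collar, but as written the proof does not close. The remaining reductions (applying \ref{item:exhaustion}, the two uses of \ref{item:monotone}, and the observation that $\Omega^*$ is clopen in $\Omega_\infty$ so that $(\Omega^* \cap B_R, \var^*\tbf{1}) \preceq (\Omega_\infty\cap B_R, \var_\infty\tbf{1})$) are all correct.
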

\noindent
That is, the generalized principal eigenvalue is upper semicontinuous in $(\Omega, \var)$.
Lemma~\ref{lem:semicontinuous} is analogous to Proposition~5.3 in~\cite{BHRossi}, and the proof is similar.
The proof also recalls that of Proposition~\ref{prop:lambda}\ref{item:exhaustion} above.
\begin{figure}[t]
  \centering
  \includegraphics[width=0.8\linewidth]{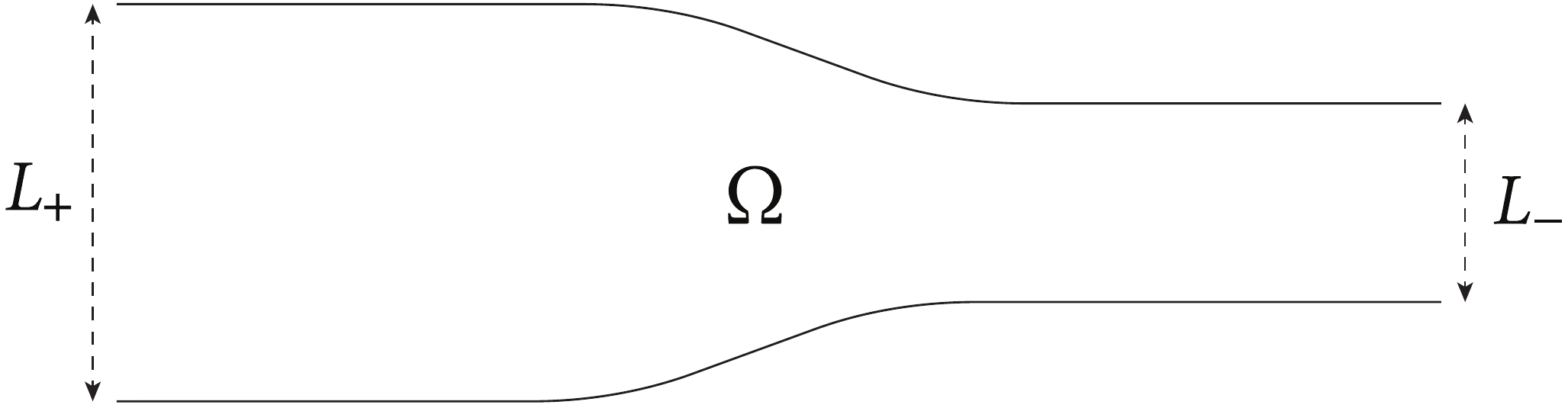}
  \caption{A narrowing strip.}
  \label{fig:narrowing-tube}
\end{figure}
\begin{remark}
  \label{rem:continuous}
  In general, this result cannot be improved---volume can ``escape to infinity'' and vanish in the limit, leading to strict inequality in \eqref{eq:semicontinuous}.
  For example, consider the narrowing strip $\Omega$ in Figure~\ref{fig:narrowing-tube}.
  Using Proposition~\ref{prop:lambda}, one can check that $\lambda(\Omega, 0) = (\pi/L_+)^2$.
  This principal eigenvalue is determined by the half-cylinder on the left of width $L_+$.

  Let us translate the domain steadily to the left via ${\Omega_n \coloneqq \Omega - (n, 0)}$.
  The principal eigenvalue is translation-invariant, so $\lambda(\Omega_n, 0) = (\pi/L_+)^2$ for all $n \in \N$.
  However, in the $n \to \infty$ limit we are left with a narrower cylinder $\Omega^*$ of width $L_- < L_+$.
  This cylinder satisfies $\lambda(\Omega_*, 0) = (\pi/L_-)^2$, so
  \begin{equation*}
    \lambda(\Omega^*, 0) > \limsup_{n \to \infty} \lambda(\Omega_n, 0).
  \end{equation*}

  Conversely, if the sequence $(\Omega_n)_n$ of domains is uniformly bounded, then volume cannot escape to infinity.
  In this case, equality holds in \eqref{eq:semicontinuous}.
  That is, $\lambda_1$ is continuous in $(\Omega, \sigma)$ when $\Omega$ is uniformly bounded.
  This continuity holds in significantly weaker topologies; see, for instance, \cite{Butazzo}.
\end{remark}
\begin{proof}[Proof of Lemma~\textup{\ref{lem:semicontinuous}}]
  By Definition~\ref{def:limit}, the connected limit $\Omega^*$ must share a point in common with all $\Omega_n$ for sufficiently large $n$.
  Dropping smaller $n \in \N$ and shifting, we are free to assume that $0 \in \bigcap_{n} \Omega_n \cap \Omega^*$.
  By Proposition~\ref{prop:lambda}\ref{item:eigenfunction}, each $\Omega_n$ admits a principal eigenfunction $\varphi_n > 0$ such that $\varphi_n(0) = 1$.
  By Corollary~\ref{cor:compactness}, we can extract a subsequence converging to an eigenfunction $\varphi^*$ on $\Omega^*$ with eigenvalue
  \begin{equation*}
    \limsup_{n \to \infty} \lambda(\Omega_n, \var_n).
  \end{equation*}
  Since $\varphi^*(0) = 1$ and $\varphi^* \geq 0$, the strong maximum principle implies that $\varphi^* > 0$.
  Using $\varphi^*$ as a supersolution in \eqref{eq:lambda}, we see that
  \begin{equation*}
    \lambda(\Omega^*, \var^*) \geq \limsup_{n \to \infty} \lambda(\Omega_n, \var_n).\qedhere
  \end{equation*}
\end{proof}
Next, we establish a Robin version of a beautiful result of Lieb.
The following is an elementary extension of Theorem~1.1 in \cite{Lieb} to non-Dirichlet boundary conditions.
\begin{theorem}
  \label{thm:Lieb}
  Let $(\Omega, \var)$ be uniformly $\m{C}^{2,\gamma}$.
  Then for all $R > 0$,
  \begin{equation*}
    \inf_{x \in \R^d} \lambda(\Omega \mid B_R(x), \var) \leq \lambda(\Omega, \var) + \lambda(B_1, 0) R^{-2}.
  \end{equation*}
\end{theorem}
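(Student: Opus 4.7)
The plan is to adapt Lieb's proof of \cite[Theorem~1.1]{Lieb} to the Robin setting via the variational characterization in Proposition~\ref{prop:lambda}\ref{item:variation}. I would first establish the estimate for bounded $\Omega$ by an averaging argument, then pass to the general case using Proposition~\ref{prop:lambda}\ref{item:exhaustion}.

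Let $\varphi>0$ be a principal eigenfunction on $(\Omega,\var)$, supplied by Proposition~\ref{prop:lambda}\ref{item:eigenfunction}, and let $\psi>0$ be the Dirichlet principal eigenfunction on $B_R$, extended by zero to $\R^d$. Set $\lambda_1=\lambda(\Omega,\var)$ and $\mu=\lambda(B_R,0)=\lambda(B_1,0)R^{-2}$. For each $x\in\R^d$, the product
\begin{equation*}
\phi_x(y)\coloneqq \varphi(y)\,\psi(y-x)
\end{equation*}
vanishes on $\Omega\cap\partial B_R(x)$ since $\psi|_{\partial B_R}=0$, and on $\{\var=0\}\cap\partial\Omega$ since $\varphi$ does, so $\phi_x$ lies in the admissible test space for \eqref{eq:variation} on $(\Omega\cap B_R(x),\,\var\tbf{1}_{\partial\Omega\cap B_R(x)})$.

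The crucial step is to expand $|\nabla\phi_x|^2=\psi^2|\nabla\varphi|^2+\varphi^2|\nabla\psi|^2+2\varphi\psi\,\nabla\varphi\cdot\nabla\psi$ and integrate the cross term by parts as $\int\nabla(\psi^2)\cdot(\varphi\nabla\varphi)$, using $-\Delta\varphi=\lambda_1\varphi$ inside $\Omega$, the Robin identity $\partial_\nu\varphi=-\var^{-1}(1-\var)\varphi$ on $\{\var>0\}\cap\partial\Omega$, and the vanishing of $\psi$ on $\partial B_R(x)$. A direct computation shows that the boundary term generated by this integration by parts \emph{exactly cancels} the Robin contribution $\int_{\partial\Omega\cap B_R(x)}\var^{-1}(1-\var)\phi_x^2$ in the Rayleigh numerator, leaving
\begin{equation*}
\lambda(\Omega\mid B_R(x),\var)\;\leq\;\lambda_1+\frac{\int_\Omega \varphi(y)^2\,|\nabla\psi(y-x)|^2\,dy}{\int_\Omega \varphi(y)^2\,\psi(y-x)^2\,dy}.
\end{equation*}
This cancellation is the only ingredient beyond Lieb's Dirichlet argument, and I expect it to be the main technical point.

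Suppose first that $\Omega$ is bounded, so $\varphi\in L^2(\Omega)$. Applying Fubini to $\int_{\R^d}\int_\Omega \varphi^2\bigl[|\nabla\psi(y-x)|^2-\mu\,\psi(y-x)^2\bigr]\,dy\,dx$ yields zero, since $\int_{\R^d}|\nabla\psi|^2=\mu\int_{\R^d}\psi^2$ by the Dirichlet eigenvalue relation on $B_R$. Hence the $x$-integrand cannot be strictly positive on the nonempty open set $\{x:B_R(x)\cap\Omega\neq\emptyset\}$, so some $x$ achieves $\int\varphi^2|\nabla\psi(\cdot-x)|^2\leq\mu\int\varphi^2\psi(\cdot-x)^2$, yielding $\lambda(\Omega\mid B_R(x),\var)\leq\lambda_1+\mu$. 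For general $\Omega$, I would apply the bounded case to $\Omega_M\coloneqq\Omega\cap B_M(0)$, keeping $\var$ on $\partial\Omega\cap B_M$ and imposing Dirichlet conditions on the artificial boundary $\Omega\cap\partial B_M$; call the resulting boundary parameter $\var_M$. This produces $x_M$ with $\lambda(\Omega_M\mid B_R(x_M),\var_M)\leq\lambda(\Omega_M,\var_M)+\mu$. The monotonicity in Proposition~\ref{prop:lambda}\ref{item:monotone} gives $\lambda(\Omega\mid B_R(x_M),\var)\leq\lambda(\Omega_M\mid B_R(x_M),\var_M)$, and the exhaustion $\lambda(\Omega_M,\var_M)\to\lambda(\Omega,\var)$ from Proposition~\ref{prop:lambda}\ref{item:exhaustion} closes the argument by letting $M\to\infty$.
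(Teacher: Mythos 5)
Your argument is correct, but it follows a genuinely different route from the paper's. You reproduce Lieb's original strategy: take exact principal eigenfunctions $\varphi$ and $\psi$, form the product $\phi_x=\varphi\,\psi(\cdot-x)$, and integrate the cross term by parts in $y$ using the equation for $\varphi$ and the Robin boundary identity $\partial_\nu\varphi=-\var^{-1}(1-\var)\varphi$ to obtain, for \emph{each} fixed $x$, the pointwise cancellation of the Robin boundary contribution. This is a clean and verifiable computation, but it forces you to assume $\varphi\in L^2$, hence a two-step argument via bounded truncations $\Omega_M$ followed by Proposition~\ref{prop:lambda}\ref{item:exhaustion}. The paper instead sidesteps eigenfunctions entirely: it picks compactly supported $\eps$-near-optimizers $f,g$ of the Rayleigh quotients from Proposition~\ref{prop:lambda}\ref{item:variation}, writes $h_x=f(\cdot)g(\cdot-x)$, and observes that the cross term $\tfrac12\nab(f^2)(y)\cdot\nab(g^2)(y-x)=-\tfrac12\nab(f^2)(y)\cdot\nab_x[g(y-x)^2]$ integrates to zero over $x\in\R^d$ by compact support of $g$ — an \emph{averaging} cancellation that needs no PDE or boundary condition for $f$. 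This lets them handle unbounded $\Omega$ in one pass without the exhaustion step, at the cost of a trailing $\eps$ that is sent to zero at the end. Your approach buys an explicit ground-state identity and follows Lieb more closely; the paper's buys brevity and generality. One point you should address more carefully: when applying your ``bounded case'' to $\Omega_M=\Omega\cap B_M$, the domain has corners and $\var_M$ is discontinuous, so the existence and $W^{2,2}$ regularity of the principal eigenfunction on $(\Omega_M,\var_M)$ is not covered by Proposition~\ref{prop:lambda}\ref{item:eigenfunction} as stated; the paper itself leans on \cite{BNV} (and an unwritten Robin analogue) for exactly this in the proof of \ref{item:exhaustion}, and you would need to invoke the same machinery to justify the integration by parts on the truncated domain.
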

This is an essential tool in our analysis of the principal eigenvalue.
Informally, it states that $\lambda$ is local.
To determine $\lambda$ to accuracy $\eps$, it suffices to examine $\Omega$ at scale $\propto \eps^{-2}$.
\begin{proof}
  Fix $\eps > 0$.
  By Proposition~\ref{prop:lambda}\ref{item:variation} and the definition of $\m{H}_\var$, there exist $f \in \m{C}_0^\infty\big(\bar\Omega \setminus \{\var = 0\}\big)$ and $g \in \m{C}_0^\infty(B_R)$  with unit $L^2$ norms such that
  \begin{equation}
    \label{eq:almost-eigen-f}
    \int_{\Omega} \abs{\nab f}^2 + \int_{\partial \Omega} \var^{-1}(1 - \var) f^2 < \lambda(\Omega, \var) + \eps/2
  \end{equation}
  and
  \begin{equation}
    \label{eq:almost-eigen-g}
    \int_{B_R} \abs{\nab g}^2 < \lambda(B_R, 0) + \eps/2.
  \end{equation}
  We extend $g$ by $0$ to $\R^d$ and define
  \begin{equation*}
    h_x(y) \coloneqq f(y) g(y - x)
  \end{equation*}
  for $x \in \R^d$ and $y \in \Omega$.
  Then $h_x \in \m{H}_\var$.
  Next, we define
  \begin{equation*}
    T(x) \coloneqq \int_{\Omega} \abs{\nab h_x}^2 + \int_{\partial \Omega} \var^{-1}(1 - \var) h_x^2 \And D(x) \coloneqq \int_\Omega h_x^2.
  \end{equation*}
  Fubini and our $L^2$-normalization of $f$ and $g$ imply that $\int_{\R^d} D(x) = 1$.
  For $T$, we can compute
  \begin{equation*}
    \abs{\nab h_x(y)}^2 = \abs{\nab f(y)}^2 g(y - x)^2 + f(y)^2 \abs{\nab g(y - x)}^2 + \frac{1}{2} \nab (f^2)(y) \cdot \nab(g^2)(y - x).
  \end{equation*}
  Writing $ \nab(g^2)(y - x)$ as $-\nab_x[g(y - x)^2]$, we see that the final term vanishes when we integrate in $x$.
  Thus by Fubini,
  \begin{equation}
    \label{eq:deriv-int}
    \begin{aligned}
      \int_{\R^d \times \Omega} \abs{\nab h_x}^2 &= \int_{\R^d \times \Omega} \left[\abs{\nab f(y)}^2 g(y - x)^2 + f(y)^2 \abs{\nab g(y - x)}^2\right]\\
      &= \int_\Omega \abs{\nab f}^2 + \int_{B_R} \abs{\nab g}^2.
    \end{aligned}
  \end{equation}
  Also,
  \begin{equation}
    \label{eq:boundary-int}
    \int_{\R^d \times \partial\Omega} \var^{-1}(1 - \var) h_x^2 = \int_{\partial\Omega} \var^{-1}(1 - \var) f(y)^2.
  \end{equation}
  Combining \eqref{eq:deriv-int} and \eqref{eq:boundary-int} with \eqref{eq:almost-eigen-f} and \eqref{eq:almost-eigen-g}, we see that
  \begin{equation*}
    \int_{\R^d} T(x) < \lambda(\Omega, \var) + \lambda(B_R, 0) + \eps.
  \end{equation*}
  Since $\int D = 1$, we have
  \begin{equation*}
    \int_{\R^d} \left[T(x) - (\lambda(\Omega, \var) + \lambda(B_R, 0) + \eps) D(x)\right] < 0.
  \end{equation*}
  Therefore
  \begin{equation}
    \label{eq:positive-set}
    (\lambda(\Omega, \var) + \lambda(B_R, 0) + \eps) D(x) > T(x)
  \end{equation}
  on a set of positive measure.
  In particular, there exists $x \in \R^d$ satisfying \eqref{eq:positive-set}.
  If we substitute $h_x$ into the Rayleigh quotient in Proposition~\ref{prop:lambda}\ref{item:variation}, we obtain $T(x)/D(x)$.
  Thus by our choice of $x$, we see that
  \begin{equation*}
    \lambda(\Omega \mid B_R(x), \var) \leq \lambda(\Omega, \var) + \lambda(B_R, 0) + \eps = \lambda(\Omega, \var) + \lambda(B_1, 0) R^{-2} + \eps.
  \end{equation*}
  Since $\eps > 0$ was arbitrary, the theorem follows.
\end{proof}
Next, we establish the continuity of $\lambda$ in the boundary parameter.
We do not make use of this result, but it may be of independent interest.
The proof is a typical application of Lieb's theorem.
\begin{proposition}
  Let $\Omega$ be uniformly $\m{C}^{2,\gamma}$.
  Suppose $(\var_n)_{n \in \N}$ is a uniformly $\m{C}^{2,\gamma}$ sequence of boundary parameters converging in $L^\infty(\partial \Omega)$ to a $\m{C}_{\mathrm{b}}^{2,\gamma}$ function $\var$.
  Then
  \begin{equation*}
    \lim_{n \to \infty} \lambda(\Omega, \var_n) \to \lambda(\Omega, \var).
  \end{equation*}
\end{proposition}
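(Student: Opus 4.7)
The plan is to prove the two inequalities $\limsup_n \lambda(\Omega, \var_n) \leq \lambda(\Omega, \var)$ and $\liminf_n \lambda(\Omega, \var_n) \geq \lambda(\Omega, \var)$ separately, combining the exhaustion and monotonicity of Proposition~\ref{prop:lambda}, Lieb's localization (Theorem~\ref{thm:Lieb}), the semicontinuity Lemma~\ref{lem:semicontinuous}, and the compactness Corollary~\ref{cor:compactness}.

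For the upper bound I would fix $x_0 \in \R^d$ and $\eps > 0$, and use Proposition~\ref{prop:lambda}\ref{item:exhaustion} to pick $R$ large enough that $\lambda(\Omega \mid B_R(x_0), \var) \leq \lambda(\Omega, \var) + \eps$. On the bounded set $\Omega \cap B_R(x_0)$, only the boundary integral in the Rayleigh quotient from Proposition~\ref{prop:lambda}\ref{item:variation} depends on the boundary parameter, and this integral is continuous under $L^\infty$ perturbations of $\var$ because the $H^1$-trace is bounded. Hence $\lambda(\Omega \mid B_R(x_0), \var_n) \to \lambda(\Omega \mid B_R(x_0), \var)$, and domain monotonicity (Proposition~\ref{prop:lambda}\ref{item:monotone}) yields $\limsup_n \lambda(\Omega, \var_n) \leq \lambda(\Omega, \var) + \eps$.

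For the lower bound I would invoke Theorem~\ref{thm:Lieb} applied to each $(\Omega, \var_n)$: for $R$ with $\lambda(B_1, 0) R^{-2} < \eps$, there exists $x_n \in \R^d$ with $\lambda(\Omega \mid B_R(x_n), \var_n) \leq \lambda(\Omega, \var_n) + \eps$. Corollary~\ref{cor:compactness} lets me extract a subsequence along which $(\tau_{x_n}\Omega, \tau_{x_n}\var_n) \to (\Omega^\infty, \var^\infty)$ locally uniformly in $\m{C}^{2,\al}$ for some $\al \in (0, \gamma)$, and the bounded-domain continuity recalled above (now in both domain and boundary parameter) gives $\lambda(\tau_{x_n}\Omega \mid B_R(0), \tau_{x_n}\var_n) \to \lambda(\Omega^\infty \mid B_R(0), \var^\infty)$. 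To lower-bound this limit by $\lambda(\Omega, \var)$, I would observe that every connected component $C$ of $\Omega^\infty$ is itself the connected limit of $(\Omega, \var)$ along the shifted sequence $x_n + y$ for any basepoint $y \in C$; Lemma~\ref{lem:semicontinuous} and translation invariance then give $\lambda(C, \var^\infty|_{\partial C}) \geq \lambda(\Omega, \var)$. Infimizing over components and using Proposition~\ref{prop:lambda}\ref{item:monotone} on the ball closes the argument: $\liminf_n \lambda(\Omega, \var_n) \geq \lambda(\Omega, \var) - \eps$, and $\eps \to 0$ finishes.

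The main obstacle is the disconnection phenomenon in the lower bound: the limiting domain $\Omega^\infty$ may have several components even when each $\tau_{x_n}\Omega$ is connected (cf. Figure~\ref{fig:connected-limit}), and a single application of Lemma~\ref{lem:semicontinuous} only controls the component containing the origin. Handling this requires the component-by-component argument above. A secondary technicality is the bounded-domain continuity when the domain and boundary parameter vary jointly, which is classical but must account for the corner and the indicator cut-off along $\partial B_R$.
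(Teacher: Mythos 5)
Your lower-bound argument follows the paper's hard direction quite closely (Lieb's Theorem~\ref{thm:Lieb}, translated limits via Corollary~\ref{cor:compactness}, and Lemma~\ref{lem:semicontinuous}; your component-by-component use of Lemma~\ref{lem:semicontinuous} is a legitimate substitute for the paper's device of recentering at points $y_n$ in the minimal-eigenvalue component). The genuine gap is the ``bounded-domain continuity in the boundary parameter'' on which \emph{both} halves of your proof lean. You justify it by saying that only the boundary integral in \eqref{eq:variation} depends on the parameter and that this integral is continuous under $L^\infty$ perturbations of $\var$ because the $H^1$-trace is bounded. That is false near the set $\{\var = 0\}$: the Robin coefficient $\var^{-1}(1-\var)$ is not an $L^\infty$-continuous function of $\var$ there (it blows up), and the form domain $\m{H}_\var$ itself changes discontinuously. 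For instance, with $\var \equiv 0$ and $\var_n \equiv 1/n$ the coefficient tends to $+\infty$ and $\m{H}_{\var_n}$ jumps from the Dirichlet space to all of $H^1(\Omega)$; the eigenvalues do converge in this Robin-to-Dirichlet regime, but not for the reason you give, and this is precisely the delicate case the proposition must handle.

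What you actually need are two one-sided statements, each requiring its own argument. For the upper bound it suffices that $\limsup_n \lambda(\Omega \mid B_R(x_0), \var_n) \leq \lambda(\Omega \mid B_R(x_0), \var)$, and this can be repaired by testing with a fixed $\phi \in \m{C}_0^\infty$ supported away from $\{\var = 0\}$ and away from the artificial boundary $\Omega \cap \partial B_R(x_0)$: on its support $\var \geq c > 0$, hence $\var_n \geq c/2$ eventually, $\phi$ belongs to the test space for $\var_n$, and the coefficients $\var_n^{-1}(1-\var_n)$ converge uniformly there. (The paper sidesteps this entirely by reducing to monotone sequences and passing principal eigenfunctions to the limit.) For the lower bound you need $\liminf_n \lambda(\tau_{x_n}\Omega \mid B_R, \tau_{x_n}\var_n) \geq \lambda(\Omega^\infty \mid B_R, \var^\infty)$ in a setting with corners, a discontinuous mixed parameter, and a possibly degenerating intersection; calling this ``classical'' does not dispose of it (Remark~\ref{rem:continuous} and \cite{Butazzo} concern cleaner settings, and elsewhere the paper introduces truncations precisely to dodge such issues). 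The paper's fix is to pass the positive eigenfunctions $\psi_n$ of the cut problems to the limit---normalized at the points $y_n$ so that the limit is nontrivial---and then to invoke the fact that on a bounded domain every positive eigenfunction is principal; this yields the identity \eqref{eq:principal-limit} and hence exactly the inequality you need. You also implicitly use that $\Omega^\infty \cap B_R \neq \emptyset$, which follows from the uniform bound $\lambda(\Omega, \var_n) \leq \lambda(B_1,0) R_{\textnormal{in}}^{-2}$ as in the paper. With these repairs your outline goes through; as written, the continuity claim is a real gap.
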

\begin{proof}
  By Proposition~\ref{prop:lambda}\ref{item:monotone}, $\lambda(\Omega, \var)$ is a decreasing function of $\var$.
  We are therefore free to assume that the sequence $(\var_n)_{n \in \N}$ is monotone.
  First suppose $\var_n \nearrow \var$.
  Without loss of generality, we may assume that $0 \in \Omega$.
  By Proposition~\ref{prop:lambda}\ref{item:eigenfunction}, for each $n$, there exists a $\var_n$-principal eigenfunction $\varphi_n$ with $\varphi_n(0) = 1$.
  Elliptic estimates allow us to extract a locally convergent subsequence with limit $\varphi$, which is a positive eigenfunction with boundary parameter $\var$ and eigenvalue
  \begin{equation*}
    \lim_{n \to \infty} \lambda(\Omega, \var_n).
  \end{equation*}
  Using $\varphi$ as a supersolution in \eqref{eq:lambda}, we see that
  \begin{equation*}
    \lambda(\Omega, \var) \geq \lim_{n \to \infty} \lambda(\Omega, \var_n).
  \end{equation*}
  The reverse inequality follows from $\var_n \nearrow \var$ and Proposition~\ref{prop:lambda}\ref{item:monotone}, so in fact
  \begin{equation*}
    \lambda(\Omega, \var) = \lim_{n \to \infty} \lambda(\Omega, \var_n).
  \end{equation*}

  Now suppose $\var_n \searrow \var$.
  This case is more subtle.
  Fix $\eps > 0$ and choose $R > 0$ such that $\lambda(B_1, 0) R^{-2} < \eps$.
  For each $n \in \N$, Theorem~\ref{thm:Lieb} provides $x_n \in \R^d$ such that
  \begin{equation}
    \label{eq:good-ball}
    \lambda(\Omega \mid B_R(x_n), \var_n) \leq \lambda(\Omega, \var_n) + \eps.
  \end{equation}
  The point $x_n$ need not lie in $\Omega$, but it is no more than distance $R$ away from $\Omega$, since $B_R(x_n) \cap \Omega$ must be nonempty.
  Let $y_n$ be a point in the component of $\Omega \cap B_R(x_n)$ with minimal eigenvalue.
  Taking $n \to \infty$, we can extract a nonempty connected limit $(\Omega^*, \var^*)$ of $(\Omega, \var_n)$ along $(y_n)_{n \in \N}$.
  The sequence $(x_n - y_n)_n$ lies in $B_R$, so we can extract a further limit point $z$ of $(x_n - y_n)_n$.
  We note that $B_R(z) \cap \Omega^*$ is nonempty, for otherwise $\lambda(\Omega \mid B_R(x_n), \var_n)$ would tend to infinity, contradicting \eqref{eq:good-ball}.
  Restricting to a subsequence, there exist positive eigenfunctions $\psi_n$ corresponding to $\lambda(\Omega \mid B_R(x_n), \var_n)$ such that $\tau_{y_n}\psi_n$ converges as $n \to \infty$ to a positive eigenfunction $\psi^*$ for $(\Omega^* \mid B_R(z), \var^*)$ with eigenvalue
  \begin{equation*}
    \lim_{n \to \infty} \lambda(\Omega \mid B_R(x_n), \var_n).
  \end{equation*}
  Because $\Omega^* \cap B_R$ is bounded, every positive eigenfunction is principal, so in fact
  \begin{equation}
    \label{eq:principal-limit}
    \lambda(\Omega^* \mid B_R(z), \var) = \lim_{n\to\infty}\lambda(\Omega \mid B_R(x_n), \var_n).
  \end{equation}
  Therefore Lemma~\ref{lem:semicontinuous}, \eqref{eq:good-ball}, and \eqref{eq:principal-limit} yield
  \begin{align*}
    \lambda(\Omega, \var) \leq \lambda(\Omega^*, \var^*) \leq \lambda(\Omega^* \mid B_R(z), \var^*) &= \lim_{n\to\infty} \lambda(\Omega \mid B_R(x_n), \var_n)\\
                                                                                                 &\leq \lim_{n\to\infty} \lambda(\Omega, \var_n) + \eps.
  \end{align*}
  Since $\eps > 0$ was arbitrary,
  \begin{equation*}
    \lambda(\Omega, \var) \leq \lim_{n\to\infty} \lambda(\Omega, \var_n).
  \end{equation*}
  The reverse inequality follows from $\var_n \searrow \var$ and Proposition~\ref{prop:lambda}\ref{item:monotone}.
\end{proof}
To close the section, we show that the Neumann generalized principal eigenvalue is zero.
Our argument is similar to the proof of Theorem~2.3 in \cite{BHN10}.
\begin{proposition}
  \label{prop:Neumann-zero}
  If $\Omega$ is uniformly $\m{C}^{2,\gamma}$, then $\lambda(\Omega, 1) = 0$.
\end{proposition}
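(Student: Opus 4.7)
The plan is to sandwich $\lambda(\Omega, 1)$ between matching inequalities. The lower bound $\lambda(\Omega, 1) \geq 0$ is immediate from the definition \eqref{eq:lambda}: the constant function $\phi \equiv 1$ satisfies $-\Delta \phi = 0 = 0 \cdot \phi$ and $\m{N}_1 \phi = \partial_\nu 1 = 0 \geq 0$, so it serves as an admissible supersolution at $\mu = 0$.

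For the matching upper bound, I would appeal to the variational formula from Proposition~\ref{prop:lambda}\ref{item:variation}. Because $\var \equiv 1$, the boundary integral in \eqref{eq:variation} vanishes, so
\begin{equation*}
  \lambda(\Omega, 1) = \inf_{\phi \in \m{H}_1 \setminus \{0\}} \frac{\int_\Omega \abs{\nab \phi}^2}{\int_\Omega \phi^2}.
\end{equation*}
Fix any $x_0 \in \Omega$ and $\eps > 0$, and consider the radial trial $\phi_\eps(x) = e^{-\eps \abs{x - x_0}}$. Since $\abs{\nab \phi_\eps} = \eps \phi_\eps$ almost everywhere and $\phi_\eps$ decays exponentially, both integrals on the right are finite and their ratio is \emph{exactly} $\eps^2$, independent of the domain. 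Letting $\eps \to 0$ then yields $\lambda(\Omega, 1) \leq 0$ and closes the proof.

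The one technical point---and the principal obstacle, modest as it is---is that $\phi_\eps$ is not itself in $\m{C}_0^\infty(\bar\Omega)$: it is neither smooth at $x_0$ nor compactly supported. I would resolve this by truncation and mollification. Multiply $\phi_\eps$ by a Lipschitz cutoff $\eta_R$ equal to $1$ on $B_R(x_0)$ and vanishing outside $B_{2R}(x_0)$ with $\abs{\nab \eta_R} \leq 2/R$, then convolve with a smooth mollifier after extending across $\partial \Omega$ (such an extension is available thanks to the uniform $\m{C}^{2,\gamma}$ regularity). The exponential decay of $\phi_\eps$ controls the truncation error, so $\phi_\eps \eta_R \to \phi_\eps$ in $H^1(\Omega)$ as $R \to \infty$, and mollification converges in $H^1$ in the standard way. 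The resulting smooth compactly supported approximations lie in $\m{H}_1$ with Rayleigh quotients tending to $\eps^2$, which suffices. Notably, this argument is entirely insensitive to the geometry of $\Omega$: the exponential ansatz is scale-free and its Rayleigh quotient never sees the boundary, which explains why such mild regularity of $\Omega$ suffices.
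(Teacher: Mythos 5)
Your proof is correct, and it takes a genuinely different route to the upper bound than the paper. The paper uses the cutoff test function $\psi_R(y) = \min\{\op{dist}_\Omega(y, \partial(\Omega \cap B_R) \cap \Omega), 1\}$, yielding the Cheeger-type bound $\lambda(\Omega,1) \le |\Omega_R \setminus \Omega_{R-1}| / |\Omega_{R-1}|$, and then argues by contradiction that this ratio cannot stay bounded below: otherwise $|\Omega_R|$ would grow exponentially in $R$, contradicting $\Omega_R \subset B_R$. Your argument replaces this volume-growth contradiction with a single explicit trial function: since $|\nabla \phi_\eps| = \eps \phi_\eps$ pointwise for $\phi_\eps = e^{-\eps|x-x_0|}$, the Rayleigh quotient collapses to $\eps^2$ \emph{exactly}, on any domain, with no geometric input whatsoever. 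This is shorter and, as you note, makes transparent exactly why no geometric hypothesis beyond what is needed to invoke the variational formula (Proposition~\ref{prop:lambda}\ref{item:variation}) is required. Conversely, the paper's argument has the virtue of producing compactly supported test functions directly and of exhibiting the mechanism---sub-exponential volume growth of subsets of $\R^d$---that also governs the Neumann spectrum in weighted or manifold settings, where an isotropic exponential ansatz may not be available. One small remark: your truncation-plus-mollification step is fine but somewhat more elaborate than necessary; the paper already notes that $\m{H}_1 = H^1(\Omega)$ for uniformly $\m{C}^{2,\gamma}$ domains, so it suffices to observe $\phi_\eps \in H^1(\Omega)$ and invoke this directly.
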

\begin{proof}
  Without loss of generality, assume $0 \in \Omega$.
  Given $R > 1$, we abbreviate $\Omega \cap B_R$ by $\Omega_R$.
  We control $\lambda(\Omega, 1)$ via the variational formula \eqref{eq:variation}.
  Let $\op{dist}_\Omega$ denote the distance induced on the metric space $\Omega$ through its inclusion in $\R^d$, and define the test function
  \begin{equation*}
    \psi_R(y) \coloneqq \min\big\{\op{dist}_\Omega(y, \partial \Omega_R \cap \Omega), 1\big\}.
  \end{equation*}
  Then
  \begin{alignat}{3}
    \abs{\nab \psi} &\leq \tbf{1}_{\Omega_R \setminus \Omega_{R - 1}} \quad &&\text{in } \Omega_R,\label{eq:psi-grad-bd}\\
    \psi &= 1 &&\text{in } \Omega_{R - 1},\label{eq:psi-const}\\
    \psi &= 0 &&\text{on } \partial \Omega_R \cap \Omega.\nonumber
  \end{alignat}
  Recall the space of test functions $\m{H}_1 = H^1(\Omega)$ defined before Proposition~\ref{prop:lambda}.
  Clearly, $\psi_R \in \m{H}_1$.
  Using \eqref{eq:psi-grad-bd} and \eqref{eq:psi-const}, we find
  \begin{equation*}
    \int_{\Omega_R} \abs{\nab \psi_R}^2 \leq \abs{\Omega_R \setminus \Omega_{R - 1}} \And \int_{\Omega_R} \psi^2 \geq \abs{\Omega_{R - 1}}.
  \end{equation*}
  Therefore
  \begin{equation}
    \label{eq:geometry}
    \lambda(\Omega, 1) \leq \frac{\abs{\Omega_R \setminus \Omega_{R - 1}}}{\abs{\Omega_{R - 1}}} \quad \ForAll R > 1.
  \end{equation}
  Suppose for the sake of contradiction that there exists $\delta > 0$ such that
  \begin{equation*}
    \frac{\abs{\Omega_R \setminus \Omega_{R - 1}}}{\abs{\Omega_{R - 1}}} > \delta \ForAll R > 1.
  \end{equation*}
  Rearranging, we find
  \begin{equation*}
    \abs{\Omega_R} \geq (1 + \delta) \abs{\Omega_{R - 1}} \ForAll R > 1.
  \end{equation*}
  That is, the volume of $\Omega_R$ grows exponentially as $R \to \infty$, which contradicts $\Omega_R \subset B_R$.
  Therefore by Proposition~\ref{prop:lambda}\ref{item:trivial-lambda-bd} and \eqref{eq:geometry},
  \begin{equation*}
    0 \leq \lambda(\Omega, 1) \leq \inf_{R > 1} \frac{\abs{\Omega_R \setminus \Omega_{R - 1}}}{\abs{\Omega_{R - 1}}} = 0.\qedhere
  \end{equation*}
\end{proof}

\section{Existence}
\label{sec:existence}

We now consider the existence of positive bounded solutions to \eqref{eq:main}.
As noted in the introduction, this problem is linearly determined: existence can be characterized in terms of the generalized principal eigenvalue.
Throughout the section, we assume that $\Omega$ is a uniformly $\m{C}^{2,\gamma}$ domain.

We begin by proving a maximum principle for the linear problem with variable potential.
This is a consequence of recent work of Nordmann~\cite[Theorem~1]{Nordmann}, but for the reader's convenience we provide a self-contained proof in our setting.
As in Section~\ref{sec:lambda}, let $\m{L} \coloneqq \Delta + q(x)$ with $q \in \m{C}_{\mathrm{b}}^\gamma(\Omega)$.
\begin{proposition}
  \label{prop:MP}
  Let $\var \colon \partial \Omega \to [0, 1]$ be a uniformly $\m{C}^{2,\gamma}$ boundary parameter.
  Suppose $\lambda(-\m{L}, \Omega, \var) > 0.$
  If $w \in \m{C}_{\mathrm{b}}^2(\Omega) \cap \m{C}\big(\,\bar\Omega\,\big)$ satisfies
  \begin{equation}
    \label{eq:super}
    -\m{L} w \geq 0 \And \m{N}_\var w \geq 0,
  \end{equation}
  then $w \geq 0.$
\end{proposition}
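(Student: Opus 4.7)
The plan is to argue by contradiction via translation and compactness, in the spirit of Proposition~\ref{prop:lambda}\ref{item:exhaustion} and Lemma~\ref{lem:semicontinuous}. Suppose $m \coloneqq \inf_\Omega w < 0$; this is finite because $w$ is bounded. I would pick a minimizing sequence $y_n \in \Omega$ with $w(y_n) \to m$ and form the translates $(\Omega_n, \var_n, q_n, w_n) \coloneqq (\tau_{y_n}\Omega, \tau_{y_n}\var, \tau_{y_n}q, \tau_{y_n}w)$. Corollary~\ref{cor:compactness} together with interior Schauder estimates furnishes, along a subsequence, a connected limit $(\Omega^*, \var^*)$, a limit potential $q^* \in \m{C}_{\mathrm{b}}^\gamma$, and a bounded limit $w^* \in \m{C}^2_{\mathrm{loc}}(\Omega^*)$ satisfying $-\m{L}^* w^* \geq 0$ and $\m{N}_{\var^*} w^* \geq 0$, where $\m{L}^* \coloneqq \Delta + q^*$. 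Crucially, $w^*(0) = m$ and $w^* \geq m$ throughout $\Omega^*$. A direct adaptation of Lemma~\ref{lem:semicontinuous} to $-\m{L}^*$ yields $\lambda(-\m{L}^*, \Omega^*, \var^*) \geq \lambda(-\m{L}, \Omega, \var) > 0$.

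The argument then splits on whether $0$ is an interior or boundary point of $\Omega^*$. If $0 \in \Omega^*$, I would apply a strong minimum principle to the nonnegative function $u \coloneqq w^* - m$, which satisfies $u(0) = 0$ at an interior point. Since $u$ attains a \emph{zero} extremum, the SMP can be invoked without any sign restriction on $q^*$, forcing $w^* \equiv m$ on $\Omega^*$. Substituting the constant into $-\m{L}^* w^* \geq 0$ gives $-q^* m \geq 0$, hence $q^* \geq 0$ on $\Omega^*$ (using $m < 0$), while $\m{N}_{\var^*} m = (1-\var^*) m \geq 0$ forces $\var^* \equiv 1$. The variational formula of Proposition~\ref{prop:lambda}\ref{item:variation} combined with Proposition~\ref{prop:Neumann-zero} then produces
\[
  \lambda(-\m{L}^*, \Omega^*, 1) \leq \lambda(-\Delta, \Omega^*, 1) = 0,
\]
contradicting $\lambda(-\m{L}^*, \Omega^*, \var^*) > 0$.

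If instead $0 \in \partial\Omega^*$, either $w^* \equiv m$ (and we reduce to the previous case), or the Hopf lemma applied to $u$ at $0$, again in its zero-extremum form, supplies $\partial_\nu w^*(0) < 0$. When $\var^*(0) > 0$ this makes $\m{N}_{\var^*} w^*(0) = \var^*(0) \partial_\nu w^*(0) + (1-\var^*(0)) m < 0$, violating $\m{N}_{\var^*} w^* \geq 0$. When $\var^*(0) = 0$ we read off $\m{N}_{\var^*} w^*(0) = w^*(0) = m < 0$ directly, which is also a contradiction.

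I expect the main difficulty to lie in the strong minimum principle used in the interior case: \emph{a priori} we have no control on the sign of $q^*$, so the classical SMP does not apply verbatim. The fix is to pass to $u = w^* - m$, whose zero extremum makes the SMP (and the companion Hopf lemma) robust to an arbitrary bounded zero-order coefficient. Once this step is secured, the rest of the proof is essentially bookkeeping that assembles translation invariance, the semicontinuity of the generalized principal eigenvalue under connected limits, and the vanishing of the Neumann principal eigenvalue from Proposition~\ref{prop:Neumann-zero} into a contradiction.
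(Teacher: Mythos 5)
Your proposal takes a genuinely different route from the paper's. The paper proves the proposition by a global variational argument: it substitutes the exponentially cut-off negative part $\zeta_\delta w_-$ of $w$ into the Rayleigh quotient of Proposition~\ref{prop:lambda}\ref{item:variation}, and positivity of $\lambda$ forces $w_- \equiv 0$. You instead translate along a minimizing sequence and attempt to close with an interior/boundary strong-minimum-principle dichotomy.

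Unfortunately the strong minimum principle step is a genuine gap, and the proposed ``fix'' does not close it. You correctly note that the zero-extremum version of the SMP is robust to the sign of the zero-order coefficient, but it still requires the function it is applied to be a supersolution of the operator. The shifted function $u \coloneqq w^* - m$ is \emph{not}: subtracting the constant $m$ yields
\[
-\m{L}^* u \;=\; -\m{L}^* w^* + q^* m \;\geq\; q^* m,
\]
and since $m < 0$ while $q^*$ carries no sign restriction, the right-hand side can be strictly negative, so $-\m{L}^* u \geq 0$ is simply not available. To see that the conclusion you want (``an interior negative minimum of a supersolution forces constancy'') genuinely fails in the presence of a positive potential, take $\Omega^* = (-R, R)$, $q^* = c > 0$ constant with $cR^2 < \pi^2/4$, and $w^*(x) = -\cos\big(\sqrt{c}\,x\big)$. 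Then $-\m{L}^* w^* = 0$, so $w^*$ is a supersolution with an interior minimum $-1$ at $0$, it is not constant, and moreover $\lambda(-\m{L}^*, \Omega^*, 0) = \pi^2/(4R^2) - c > 0$. (This $w^*$ violates the boundary condition, as it must, but the SMP and the Hopf lemma are purely local statements and cannot see the boundary data; the deduction you rely on is therefore false as a local assertion.) The same problem undermines the Hopf lemma in your boundary case. Morally, the translation-and-compactness step reduces the proposition on $\Omega$ to the same proposition on $\Omega^*$, which is no easier: the eigenvalue positivity must enter \emph{before} the pointwise dichotomy, which is exactly what the paper's test-function argument arranges. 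A ratio argument against a principal eigenfunction $\varphi$ (as in the proof of \cite[Theorem~1.6]{BR}) is the natural pointwise substitute, but then one must control whether $w^*/\varphi$ attains its infimum, which is delicate on unbounded $\Omega^*$.
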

The proof presented here could be easily extended to a wider class of self-adjoint operators.
However, other methods are necessary when $\m{L}$ is not self-adjoint; see \cite[Theorem~1.6(i)]{BR} for a pointwise approach.
\begin{proof}
  Define $\zeta(x) \coloneqq \op{sech}\abs{x}$ and $\zeta_\delta(x) \coloneqq \zeta(\delta x)$ for $\delta > 0$.
  Then
  \begin{equation}
    \label{eq:exp-cutoff}
    \abs{\nab \zeta_\delta(x)} = \delta \zeta_\delta(x) \abs{\tanh \abs{\delta x}} \leq \delta \zeta_\delta(x) \ForAll x \in \R^d.
  \end{equation}
  Let $w_- \coloneqq \max\{-w, 0\}$ denote the negative part of $w.$
  Recall the Hilbert space $\m{H}_\var$ from Section~\ref{sec:lambda}.
  Because $\zeta_\delta$ decays exponentially, $w$ is bounded, and $\m{N}_\var w \geq 0$, the product $\zeta_\delta w_-$ lies in $\m{H}_\var$.
  We make use of the following identity, which is easily verified through repeated integrations by parts:
  \begin{equation*}
    \int_\Omega \abs{\nab(fg)}^2 = \int_{\partial \Omega} f^2 g \partial_\nu g + \int_{\Omega} \left(g^2 \abs{\nab f}^2 - f^2 g \Delta g\right).
  \end{equation*}
  We take $f = \zeta_\delta$ and $g = w_-$.
  Using \eqref{eq:super} and \eqref{eq:exp-cutoff}, we find
  \begin{equation}
    \label{eq:energy}
    \begin{aligned}
      \int_\Omega \abs{\nab(\zeta_\delta w_-)}^2 &= \int_{\partial \Omega} \zeta_\delta^2 w_- \partial_\nu w_- +
      \int_\Omega \left(-\zeta_\delta^2 w_- \Delta w_- + w_-^2 \abs{\nab \zeta_\delta}^2\right)\\
      &\leq -\int_{\partial \Omega} \var^{-1}(1 - \var) (\zeta_\delta w_-)^2 + \int_\Omega (q + \delta^2) \abs{\zeta_\delta w_-}^2.
    \end{aligned}
  \end{equation}
  On the other hand, Proposition~\ref{prop:lambda}\ref{item:variation} states that
  \begin{equation}
    \label{eq:MP-Rayleigh}
    \int_\Omega \abs{\nab\phi}^2 \geq -\int_{\partial \Omega} \var^{-1}(1 - \var) \phi^2 + \int_\Omega \left[q + \lambda(-\m{L}, \Omega, \var)\right] \phi^2
  \end{equation}
  for all $\phi \in \m{H}_\var$.
  Setting $\phi = \zeta_\delta w_-$, \eqref{eq:energy} and \eqref{eq:MP-Rayleigh} yield
  \begin{equation*}
    \int_\Omega \left[\delta^2 - \lambda(-\m{L}, \Omega, \var)\right] \abs{\zeta_\delta w_-}^2 \geq 0.
  \end{equation*}
  Taking $\delta^2 \in \big(0, \lambda(-\m{L}, \Omega, \var)\big)$, we conclude that $w_- \equiv 0$.
  That is, $w \geq 0$.
\end{proof}
With the maximum principle in hand, we can prove our main existence result.
\begin{proof}[Proof of Theorem~\textup{\ref{thm:existence}}]
  Throughout, we fix $\const \in [0, 1]$.
  Let $f$ be positive in the sense of \ref{hyp:positive} and suppose $\lambda(\Omega, \const) < f'(0)$.
  By Proposition~\ref{prop:lambda}\ref{item:exhaustion}, there exists a ball $B$ such that
  \begin{equation*}
    \mu \coloneqq \lambda(\Omega \mid B, \const) < f'(0).
  \end{equation*}
  Let $\Omega_B$ denote the connected component of $\Omega \cap B$ whose eigenvalue is $\mu$.
  Let $\varphi$ denote the corresponding positive principal eigenfunction normalized by ${\norm{\varphi}_{L^\infty} = 1}$.
  Since $f'$ is continuous, there exists $\eps \in (0, 1)$ such that
  \begin{equation*}
    \inf_{(0, \eps]} \frac{f(s)}{s} \geq \mu.
  \end{equation*}
  It follows that $\eps \varphi$ is a subsolution of \eqref{eq:main}.
  We recall that $1$ is a supersolution.

  Let $\m{P}_t$ denote the parabolic flow given by $\m{P}_t u = v(t, \anon)$ where $v$ solves \eqref{eq:parabolic} with $v(0, \anon) = u$.
  By the comparison principle, $\m{P}_t(\eps \varphi)$ is increasing in $t$ and $\eps \varphi \leq \m{P}_t(\eps \varphi) \leq 1$.
  Thus the limit $u \coloneqq \m{P}_\infty(\eps \varphi)$ exists and satisfies $\eps \varphi \leq u \leq 1$.
  By standard parabolic estimates, $u$ solves \eqref{eq:main}.
  Moreover, the strong maximum principle implies that $u > 0$ in $\Omega$.
  So $u$ is a positive bounded solution of \eqref{eq:main}.
  \medskip

  Next, let $f$ satisfy the weak KPP hypothesis \ref{hyp:weak}.
  Suppose $\lambda(\Omega, \const) > f'(0)$ and $u$ is a nonnegative bounded solution of \eqref{eq:main}.
  Define $q \coloneqq f(u)/u$ under the convention that $q = f'(0)$ where $u = 0$.
  Let $\m{L} \coloneqq \Delta + q$.
  By the weak KPP hypothesis \ref{hyp:weak}, $q \leq f'(0)$.
  Thus the definition \eqref{eq:lambda} of $\lambda$ implies that
  \begin{equation*}
    \lambda(-\m{L}, \Omega, \const) \geq \lambda(\Omega, \const) - f'(0) > 0.
  \end{equation*}
  Because $u$ solves \eqref{eq:main}, $\m{L} u = 0$ and $\m{N}_\const u = 0$.
  By standard elliptic estimates, ${u \in \m{C}_{\mathrm{b}}^2(\Omega) \cap \m{C}\big(\bar\Omega\big)}$.
  Since $f$ is $\m{C}^{1,\gamma}$ and $f(0) = 0$, $q \in \m{C}_{\mathrm{b}}^{\gamma}(\Omega)$.
  Thus $u$ satisfies the hypotheses of the maximum principle.
  By Proposition~\ref{prop:MP}, $u = 0$.
  Thus there is no positive bounded solution of \eqref{eq:main}.
  \medskip

  We now tackle the critical case $\lambda(\Omega, \const) = f'(0)$ under additional hypotheses.
  For the remainder of the proof, we assume that $f$ is strong-KPP in the sense of \ref{hyp:strong}.
  Suppose $\Omega$ is bounded and there exists a positive bounded solution $u$ of \eqref{eq:main}.
  Then $u$ lies in the Hilbert space $\m{H}_\const$ from Section~\ref{sec:lambda}.
  We use it as a test function in the Rayleigh quotient \eqref{eq:variation}.
  By \ref{hyp:strong}, $f(u) < f'(0)u$.
  Integrating by parts, we find:
  \begin{equation}
    \label{eq:strict-quotient}
    \int_\Omega \abs{\nab u}^2 + \int_{\partial \Omega} \const^{-1}(1 - \const) u^2 = \int_{\Omega} u f(u) < f'(0) \int_\Omega u^2.
  \end{equation}
  Thus by Proposition~\ref{prop:lambda}\ref{item:variation}, $\lambda(\Omega, \const) < f'(0)$.
  In particular, there is no positive bounded solution when $\lambda(\Omega, \const) = f'(0)$.
  \medskip

  Now suppose $\Omega$ is periodic with lattice $\Lambda$ and there exists a positive bounded solution $v$ of \eqref{eq:main}.
  As noted in the introduction, $0 < v \leq 1$.
  Hence $u \coloneqq \m{P}_\infty 1$ solves \eqref{eq:main} and satisfies $0 < v \leq u \leq 1$.
  Moreover, since $\Omega$ and $1$ are periodic, so is $u$.
  That is, there exists a positive bounded \emph{periodic} solution $u$ of \eqref{eq:main}.

  Let $\Gamma \coloneqq \Omega/\Lambda$, so that $\bar\Gamma$ is a compact manifold with boundary $\partial \Gamma \coloneqq \partial \Omega/\Lambda$.
  By Theorem~1.7 and Proposition~2.5 of \cite{BR}, $\lambda(\Omega, \const)$ is the principal eigenvalue on $\Gamma$ with boundary parameter $\const$.
  (Strictly speaking, \cite{BR} only treats Dirichlet boundaries, but the Robin and Neumann proofs are identical.)
  Let $\m{H}_\const$ be $H_0^1(\Gamma)$ if $\const = 0$ and $H^1(\Gamma)$ otherwise.
  Then it is well known that the principal eigenvalue on $\Gamma$ satisfies \eqref{eq:variation} with $\Gamma$ in place of $\Omega$.
  Since $u$ is periodic, it can also be viewed as an element of $\m{H}_\const$.
  Arguing as in \eqref{eq:strict-quotient}, we again find $\lambda(\Omega, \const) < f'(0)$.
\end{proof}
\begin{remark}
  We have not used the full strength of the strong KPP hypothesis in this proof---we only required $f(u) < f'(0)u$.
  We will use the full form of \ref{hyp:strong} in the following section.
\end{remark}
\begin{remark}
  Periodic domains are akin to periodic reactions $f(x, u)$ and diffusions $\nab \cdot [A(x)\nab u]$ in the whole space.
  For an analogous existence result for periodic equations, see Theorem~2.1 of \cite{BHRoques-1}.
\end{remark}
As stated in Conjecture~\ref{conj:critical-nonexistence}, we do not expect any critical domain with a strong-KPP reaction to support a positive bounded solution.
To motivate this conjecture, we observe that we can rule out two extremes: solutions in $L^2(\Omega)$ and solutions that do not vanish locally uniformly along any subsequence.
To see the first case, we note that \eqref{eq:strict-quotient} forbids all solutions in $H^1$, and elliptic estimates ensure that $L^2$ solutions are automatically $H^1$.
For the second, one can employ a cutoff argument as in the proof of Proposition~\ref{prop:Neumann-zero} to derive a similar contradiction.

Thus the only alternative is a positive bounded solution of \eqref{eq:main} that is not square integrable but does decay along some subsequence.
We are not presently able to rule out this possibility, but it does tax the imagination.

\section{Uniqueness}
\label{sec:uniqueness}
We now come to the heart of the paper: the uniqueness of solutions to \eqref{eq:main}.
Throughout this section, we assume that $f$ is a strong-KPP reaction satisfying \ref{hyp:strong} and that $\Omega$ is a uniformly $\m{C}^{2,\gamma}$ domain with constant boundary parameter $\const \in [0, 1]$.

\subsection{Bounded domains}
Rabinowitz~\cite{Rabinowitz} has previously proven uniqueness on bounded domains.
We recall the proof for the reader's convenience.
It serves as a blueprint for our arguments in the unbounded case.
\begin{theorem}[\cite{Rabinowitz}]
  \label{thm:bounded}
  If $\Omega$ is bounded and $\const = 0$, then \eqref{eq:main} admits at most one solution.
\end{theorem}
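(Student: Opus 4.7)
The plan is to deploy a classical Picone-type integral identity that activates the strict monotonicity of $s \mapsto f(s)/s$ encoded in \ref{hyp:strong}. Suppose for contradiction that \eqref{eq:main} with $\const = 0$ admits two distinct positive bounded solutions $u_1$ and $u_2$. By the strong maximum principle each $u_i$ is strictly positive inside $\Omega$, and since $\Omega$ is bounded and uniformly $\m{C}^{2,\gamma}$, the Hopf lemma gives $\partial_\nu u_i < 0$ on $\partial\Omega$. In particular each $u_i$ is comparable to the distance to $\partial\Omega$, so the ratios $u_i^2/u_j$ are bounded up to and vanish on $\partial\Omega$.

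The core step is the pointwise Picone identity: for $u, v > 0$,
\begin{equation*}
  |\nabla u|^2 - \nabla\!\left(\frac{u^2}{v}\right) \cdot \nabla v = \left|\nabla u - \frac{u}{v}\nabla v\right|^2 \geq 0.
\end{equation*}
The comparability noted above makes $u_1^2/u_2$ an admissible $H_0^1(\Omega)$ test function against $-\Delta u_2 = f(u_2)$. Integrating by parts, with the Dirichlet condition killing the boundary contribution, yields
\begin{equation*}
  \int_\Omega u_1^2 \, \frac{f(u_2)}{u_2} = \int_\Omega \nabla\!\left(\frac{u_1^2}{u_2}\right) \cdot \nabla u_2 \leq \int_\Omega |\nabla u_1|^2 = \int_\Omega u_1 f(u_1) = \int_\Omega u_1^2 \, \frac{f(u_1)}{u_1}.
\end{equation*}
The same computation with $u_1$ and $u_2$ swapped gives the companion inequality; adding the two produces
\begin{equation*}
  \int_\Omega (u_1^2 - u_2^2)\!\left[\frac{f(u_1)}{u_1} - \frac{f(u_2)}{u_2}\right] \geq 0.
\end{equation*}

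Since $u_i \in (0, 1]$ and \ref{hyp:strong} makes $s \mapsto f(s)/s$ strictly decreasing on $(0, 1]$, the integrand is pointwise $\leq 0$, with equality precisely where $u_1 = u_2$. Hence the displayed integral can only vanish, and $u_1 \equiv u_2$, contradicting the assumption that the two solutions differ. The only delicate point is justifying that $u_1^2/u_2$ is a legitimate test function and that the boundary term truly vanishes in the integration by parts; both facts rely on the Hopf bound, which is exactly the ingredient that becomes unavailable near infinity on general unbounded domains and motivates the more elaborate arguments needed for Theorem~\ref{thm:uniqueness}.
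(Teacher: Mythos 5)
Your Picone-identity argument is correct, but it is a genuinely different route from the proof in the paper. The paper follows Rabinowitz's sliding scheme: using the strong maximum principle and the Hopf lemma it shows $u_1$ and $u_2$ are pointwise comparable, sets $\ubar{\mu} \coloneqq \inf\{\mu > 0 \mid u_1 \leq \mu u_2\}$, and derives a contradiction from $\ubar{\mu} > 1$ by applying the strong maximum principle and Hopf again to $w = \ubar{\mu}u_2 - u_1$, exploiting \ref{hyp:strong} only through $\ubar{\mu}f(u_2) > f(\ubar{\mu}u_2)$. You instead test each equation against $u_i^2/u_j$, invoke the pointwise Picone inequality, and sum to get $\int_\Omega (u_1^2 - u_2^2)\bigl[\tfrac{f(u_1)}{u_1} - \tfrac{f(u_2)}{u_2}\bigr] \geq 0$, which the strict monotonicity of $f(s)/s$ on $(0,1]$ (together with $0 < u_i \leq 1$) forces to vanish identically; this is the Brezis--Oswald-style variational proof. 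The ingredients overlap where you would expect: both arguments need Hopf and boundary regularity, in your case to make $u_1^2/u_2$ an admissible $H_0^1$ test function with vanishing boundary term, in the paper's case to establish comparability of $u_1$ and $u_2$ and of $w$ with $u_2$. What each approach buys: yours is short and self-contained but is wedded to the variational (self-adjoint, divergence-form) structure and to the finiteness of the integrals, hence to bounded domains or to solutions with decay; the paper's pointwise sliding argument extends to non-self-adjoint operators (as in \cite{Berestycki}) and, crucially for this paper, is the blueprint that survives passage to unbounded domains via connected limits and the ample--narrow decomposition in the proof of Theorem~\ref{thm:uniqueness}. Your closing remark correctly identifies the Hopf-based comparability as the ingredient that fails at infinity in either approach.
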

As noted in the introduction, Rabinowitz in fact treats more general self-adjoint elliptic operators, and the first author has shown that self-adjointness is not necessary on bounded domains~\cite{Berestycki}.
\begin{proof}
  We are free to assume there exists a positive bounded solution.
  Let $u_1$ and $u_2$ be two such solutions (not necessarily distinct).
  By the strong maximum principle and the Hopf lemma, $u_1$ and $u_2$ are comparable in the sense that
  \begin{equation}
    \label{eq:comparable}
    C^{-1}u_2 \leq u_1 \leq C u_2 \quad \text{for some } C \in [1, \infty).
  \end{equation}
  Define
  \begin{equation*}
    \ubar{\mu} \coloneqq \inf \left\{\mu > 0 \mid u_1 \leq \mu u_2 \enspace \text{in } \Omega\right\},
  \end{equation*}
  which is finite by \eqref{eq:comparable}.

  We claim that $\ubar{\mu} \leq 1$.
  Suppose for the sake of contradiction that $\ubar{\mu} > 1$.
  Let $w \coloneqq \ubar{\mu} u_2 - u_1 \geq 0$.
  By the strong KPP property and $u_2 > 0$,
  \begin{equation*}
    -\Delta w = \ubar{\mu}f(u_2) - f(u_1) > f(\ubar{\mu}u_2) - f(u_1) = q w
  \end{equation*}
  for some $q \in L^\infty(\Omega)$.
  By the strong maximum principle, $w > 0$.
  Again, the strong maximum principle and the Hopf lemma imply that $w$ is comparable to $u_2$.
  Thus there exists $\eps > 0$ such that $w \geq \eps u_2$.
  That is, $u_1 = \ubar{\mu}u_2 - w \leq (\ubar{\mu} - \eps)u_2$, contradicting the definition of $\ubar{\mu}$.
  So in fact $u_1 \leq u_2$, and $u_1 = u_2$ by symmetry.
\end{proof}
This proof relies on the Hopf lemma, and thus on the interior ball property.
To our knowledge, uniqueness in less regular domains is open.
Even if $\Omega$ is somewhat irregular, it seems plausible that all positive bounded solutions of fairly general elliptic problems on $\Omega$ are comparable to one another.
Indeed, any such solution is likely comparable to the generalized principle eigenfunction constructed in \cite{BNV}.
If this is case, the above proof of uniqueness can be extended to irregular domains.
We expect this to hold, conservatively, in the case of Lipschitz boundary.
\begin{conjecture}
  If $f$ is strong-KPP, $\Omega$ is a bounded Lipschitz domain, and $\const = 0$, then \eqref{eq:main} admits at most one solution.
\end{conjecture}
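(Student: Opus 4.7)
The plan is to mirror Rabinowitz's argument in Theorem~\ref{thm:bounded}, replacing the Hopf-lemma step---which can fail at corners of a Lipschitz boundary---with the boundary Harnack principle. Let $u_1, u_2$ be two positive bounded solutions of \eqref{eq:main}. By interior elliptic regularity and boundary regularity for the Dirichlet problem on a bounded Lipschitz domain, each $u_i$ is smooth in $\Omega$, continuous on $\bar\Omega$, and vanishes on $\partial\Omega$; each also satisfies a linear equation
\begin{equation*}
  -\Delta u_i - c_i(x)\, u_i = 0, \qquad c_i := f(u_i)/u_i \in L^\infty(\Omega).
\end{equation*}

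The first task is to prove that $u_1$ and $u_2$ are comparable: $C^{-1} u_2 \le u_1 \le C u_2$ for some $C \ge 1$. In the smooth setting of Theorem~\ref{thm:bounded} this is immediate from the Hopf lemma. The Lipschitz substitute is the boundary Harnack principle of Dahlberg, Caffarelli--Fabes--Mortola--Salsa, and Jerison--Kenig: two positive solutions of a uniformly elliptic equation with bounded coefficients that vanish on the boundary of a bounded Lipschitz domain have bounded ratio up to $\partial\Omega$. Ideally one applies this directly to $u_1$ and $u_2$; if the available formulations require a common operator, then I would instead compare each $u_i$ to a fixed reference function---say, the generalized principal Dirichlet eigenfunction $\varphi$ produced by the Berestycki--Nirenberg--Varadhan construction of \cite{BNV}, or a Green's function $G(\anon, x_0)$ with pole at an interior point $x_0$---and combine the two comparisons via the triangle principle.

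With comparability in hand, the remainder of Rabinowitz's argument runs essentially verbatim. Set $\ubar{\mu} := \inf\{\mu > 0 \mid u_1 \le \mu u_2\}$, which is finite. Assuming for contradiction that $\ubar{\mu} > 1$ and writing $w := \ubar{\mu} u_2 - u_1 \ge 0$, the strong KPP property \ref{hyp:strong} together with the mean value theorem yields
\begin{equation*}
  -\Delta w = \ubar{\mu}\, f(u_2) - f(u_1) > f(\ubar{\mu} u_2) - f(u_1) = q(x)\, w
\end{equation*}
for some $q \in L^\infty(\Omega)$, where the strict middle inequality uses $\ubar{\mu} > 1$ together with the strict monotonicity of $s \mapsto f(s)/s$ on $(0,1]$ and the fact that $f(s) < 0$ for $s > 1$. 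The interior strong maximum principle (in the sign-free form: $Lw \geq 0$, $w \geq 0$, hence $w \equiv 0$ or $w > 0$) then forces $w > 0$ in $\Omega$, since $w \equiv 0$ would itself contradict strong KPP. A second application of the boundary Harnack principle, this time to $w$ and $u_2$, produces $\eps > 0$ with $w \ge \eps u_2$, whence $u_1 \le (\ubar{\mu} - \eps) u_2$ contradicts the minimality of $\ubar{\mu}$. Thus $\ubar{\mu} \le 1$, i.e.\ $u_1 \le u_2$, and symmetry yields $u_1 = u_2$.

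The main obstacle is the rigorous use of the boundary Harnack principle. The classical formulations compare two solutions of the \emph{same} elliptic operator, whereas the functions $u_1, u_2, w$, and any fixed reference here satisfy equations with different $L^\infty$ potentials (and any natural reference such as $\varphi$ or $G(\anon, x_0)$ involves yet another zeroth-order term). Invoking a BHP valid across distinct uniformly elliptic operators---or else chaining two BHP comparisons through a common reference---requires care, as does verifying that each candidate function attains its zero boundary data in the sense required by the chosen BHP. Once these technicalities are handled, the rest of the proof is a direct translation of the smooth case.
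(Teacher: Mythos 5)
This statement appears in the paper as a \emph{conjecture}, not a theorem: the authors explicitly remark that ``To our knowledge, uniqueness in less regular domains is open,'' and they offer only the heuristic that positive solutions are ``likely comparable to the generalized principal eigenfunction constructed in \cite{BNV}.'' There is no proof in the paper for you to be measured against.

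Your proposal reproduces essentially the same heuristic the authors have in mind: replace the Hopf lemma by a boundary Harnack principle to establish comparability, then run Rabinowitz's iteration verbatim. Your reconstruction of the iteration is accurate, including the careful treatment of the strict inequality $\ubar{\mu} f(u_2) > f(\ubar{\mu} u_2)$ for $\ubar{\mu} > 1$ in both the regimes $\ubar{\mu} u_2 \le 1$ and $\ubar{\mu} u_2 > 1$, and the observation that $w \equiv 0$ is itself incompatible with \ref{hyp:strong}. You also correctly identify the single load-bearing gap: the classical boundary Harnack principle on Lipschitz domains (Dahlberg, CFMS, Jerison--Kenig) compares two positive solutions of the \emph{same} operator, whereas here $u_1$, $u_2$, and $w$ satisfy Schr\"odinger equations $-\Delta + c_i$ with \emph{different} bounded potentials $c_i$, and any fixed reference (the BNV eigenfunction, a Green's function) introduces a third. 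You flag this rather than resolve it, so what you have is a program, not a proof---and this is precisely why the statement remains a conjecture rather than a theorem. If you want to push this further, the most promising avenue is the perturbation theory for Schr\"odinger operators with Green-bounded potentials on bounded Lipschitz domains (conditional gauge theorem of Cranston--Fabes--Zhao and its descendants), which gives comparability $u_i \asymp G_\Omega(\cdot, x_0)$ whenever $\lambda(-\Delta - c_i, \Omega, 0) > 0$; checking this hypothesis for $c_i = f(u_i)/u_i$, verifying the boundary regularity needed to invoke the theorem, and applying the same machinery to $w$ inside the iteration are exactly the technicalities that would have to be nailed down to upgrade your sketch to a proof.
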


\subsection{Domain decomposition}
The proof of Theorem~\ref{thm:bounded} points to certain difficulties in unbounded domains.
When $\Omega$ is unbounded, positive solutions may vanish at infinity.
If distinct solutions vanish at different rates, they need not be comparable in the sense of \eqref{eq:comparable}.
It is reasonable to expect all solutions to vanish at comparable rates, but it is not clear how to prove such a fact.

We sidestep this issue by decomposing $\Omega$ into two parts, $\Omega_-$ and $\Omega_+$.
Roughly speaking, $\Omega_-$ is the region on which solutions cannot vanish at infinity.
Solutions do vanish at infinity in the remainder $\Omega_+$, but it is sufficiently ``narrow'' that we can apply our maximum principle.
This additional tool substitutes for precise knowledge about rates of decay and suffices to show uniqueness.

To capture these properties, we rely on the principal limit spectrum $\Sigma$ defined in the introduction.
In the following, $\Omega_\pm$ is a uniformly $\m{C}^{2,\gamma}$ open set that need not be connected (and may be empty) and $\var_\pm$ is a uniformly $\m{C}^{2,\gamma}$ boundary parameter on $\Omega_\pm$.
These notions of regularity are made precise in Definition~\ref{def:smooth}.
\begin{definition}
  \label{def:ample-narrow}
  Fix $\mu > 0$.
  A pair $(\Omega_-, \var_-)$ is $\mu$-\emph{ample} if $\Sigma(\Omega_-, \var_-) \subset [0, \mu)$.
  A pair $(\Omega_+, \var_+)$ is $\mu$-\emph{narrow} if $\Sigma(\Omega_+, \var_+) \subset (\mu, \infty)$ or, equivalently, $\lambda(\Omega_+, \var_+) > \mu$.
  We say $(\Omega_-, \Omega_+, \var_-, \var_+)$ is a $\mu$-\emph{ample--narrow decomposition} of $(\Omega, \const)$ if the following hold: $(\Omega_\pm, \var_\pm) \preceq (\Omega, \const)$, $\Omega = \Omega_- \cup \Omega_+$, $(\Omega_-, \var_-)$ is $\mu$-ample, $(\Omega_+, \var_+)$ is $\mu$-narrow, and $\max\{\var_+, \var_-\} = \const$ on $\partial \Omega$, where we extend $\var_\pm$ by $0$ to $\partial \Omega$.
\end{definition}
Conceptually, $\lambda(\Omega, \var) > \mu$ if the absorption from the $\var$-boundary is stronger than rate-$\mu$ growth in the interior.
This means that every point in $\Omega$ is relatively close to $\partial \Omega$; that is, the domain is ``narrow.''
Conversely, if $\lambda(\Omega^*, \var^*) < \mu$ for every connected limit $(\Omega^*, \var^*)$, then $(\Omega, \var)$ must be sufficiently capacious even at infinity.
We express this notion through the word ``ample.''

We wish to prove uniqueness on domains $\Omega$ that are \emph{noncritical} in the sense that $f'(0) \not \in \bar{\Sigma}(\Omega, \const)$.
However, this condition on limit eigenvalues is difficult to use directly.
Instead, we prove a geometric result relating this condition to ample and narrow sets.
\begin{theorem}
  \label{thm:domain-decomposition}
  Fix $\mu > 0.$
  Then $(\Omega, \const)$ admits a $\mu$-ample--narrow decomposition if and only if $\mu \not \in \bar{\Sigma}(\Omega, \const)$.
\end{theorem}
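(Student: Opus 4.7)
Both directions rest on compactness arguments that exploit upper semicontinuity of $\lambda$ under connected limits (Lemma~\ref{lem:semicontinuous}) and Lieb's localization estimate (Theorem~\ref{thm:Lieb}).

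\emph{Forward direction.} Given a $\mu$-ample--narrow decomposition, I would first upgrade the hypotheses to quantitative separations $\mu_- := \sup\Sigma(\Omega_-,\var_-) < \mu$ and $\mu_+ := \inf\Sigma(\Omega_+,\var_+) > \mu$. Each follows from a diagonal extraction: if $\lambda(\Omega_{-,k}^*,\var_{-,k}^*) \to \mu$ along connected limits of $(\Omega_-,\var_-)$, picking a suitable translate $\tau_{y_k}\Omega_-$ approximating each $\Omega_{-,k}^*$ and extracting a further connected limit, Lemma~\ref{lem:semicontinuous} produces a connected limit of $(\Omega_-,\var_-)$ with eigenvalue $\geq \mu$, contradicting $\Sigma(\Omega_-,\var_-)\subset[0,\mu)$. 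I then show $\Sigma(\Omega,\const)\subset[0,\mu_-]\cup[\mu_+,\infty)$, which gives the claim after taking closures. Given a connected limit $(\Omega^*,\const^*)$ of $(\Omega,\const)$ along $(x_n)$, I would examine whether the full limit $\Omega_{-,\infty}$ of $(\tau_{x_n}\Omega_-,\tau_{x_n}\var_-)$ meets $\Omega^*$. If yes, pick $p\in\Omega^*\cap\Omega_{-,\infty}$; the connected component $\Omega_-^*$ of $\Omega_{-,\infty}$ containing $p$ is a connected limit of $\Omega_-$ lying in $\Omega^*$, so $(\Omega_-^*,\var_-^*)\preceq(\Omega^*,\const^*)$ and Proposition~\ref{prop:lambda}\ref{item:monotone} yields $\lambda(\Omega^*)\leq\mu_-$. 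If no, openness of $\Omega^*$ forces $\Omega^*\cap\bar\Omega_{-,\infty}=\emptyset$, whence $\Omega^*=\Omega_+^*$ as open sets; the compatibility $\max\{\var_+,\var_-\}=\const$, together with the separation of $\partial\Omega^*$ from $\bar\Omega_{-,\infty}$, forces $\var_+^*=\const^*$ on $\partial\Omega^*$, so $(\Omega^*,\const^*)$ is a connected limit of $(\Omega_+,\var_+)$ and $\lambda(\Omega^*)\geq\mu_+$.

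\emph{Reverse direction.} Choose $\eta > 0$ with $[\mu-2\eta,\mu+2\eta]\cap\bar\Sigma(\Omega,\const)=\emptyset$. The key preliminary is a \emph{uniform local gap}: there is $R_0 > 0$ with $\lambda(\Omega\mid B_{R_0}(x),\const)\notin[\mu-\eta,\mu+\eta]$ for every $x\in\Omega$. Otherwise I would extract $x_n\in\Omega$ and $R_n\to\infty$ violating this, pass to a connected limit $(\Omega^*,\const^*)$ of $(\tau_{x_n}\Omega,\tau_{x_n}\const)$, and combine monotonicity in $R$ with continuity of $\lambda$ on bounded subdomains to force $\lambda(\Omega^*,\const^*)\in[\mu-\eta,\mu+\eta]$, contradicting the gap. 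Then $U_+:=\{x\in\Omega:\lambda(\Omega\mid B_{R_0}(x),\const)>\mu\}$ and $U_-:=\{x\in\Omega:\lambda(\Omega\mid B_{R_0}(x),\const)<\mu\}$ are open, disjoint, and cover $\Omega$.

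Finally, I would construct uniformly $\m{C}^{2,\gamma}$ open sets $\Omega_\pm\subset\Omega$ as mollified thickenings of $U_\pm$ satisfying $U_\pm\subset\Omega_\pm\subset U_\pm+B_{R_0/2}$ with $\Omega_-\cup\Omega_+=\Omega$, and set $\var_\pm=\const$ on $\partial\Omega_\pm\cap\partial\Omega$ and $\var_\pm=0$ on $\partial\Omega_\pm\cap\Omega$. Producing these thickenings with \emph{genuine} uniform $\m{C}^{2,\gamma}$ regularity---so that the two artificial interior boundaries are smooth, transverse to $\partial\Omega$, and uniformly controlled---is the main technical obstacle of the proof, and requires careful mollification of a cutoff function separating $U_-$ from $U_+$. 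Granting this, $(\Omega_\pm,\var_\pm)\preceq(\Omega,\const)$, coverage, and $\max\{\var_-,\var_+\}=\const$ on $\partial\Omega$ follow from the construction. For $\mu$-narrowness: for any $x\in\Omega_+$ there is $y\in U_+$ with $B_{R_0/2}(x)\subset B_{R_0}(y)$, so the inclusion $(\Omega_+\mid B_{R_0/2}(x),\var_+)\preceq(\Omega\mid B_{R_0}(y),\const)$ and Proposition~\ref{prop:lambda}\ref{item:monotone} give $\lambda(\Omega_+\mid B_{R_0/2}(x),\var_+)>\mu+\eta$; Theorem~\ref{thm:Lieb} then yields $\lambda(\Omega_+,\var_+)>\mu+\eta-4\lambda(B_1,0)R_0^{-2}>\mu$ once $R_0$ is chosen large enough. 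Since Lieb only supplies lower bounds on $\lambda$, the symmetric statement for $\mu$-ampleness of $\Omega_-$ requires a separate argument: one verifies that each connected limit $\Omega_-^*$ inherits a sufficiently large low-local-eigenvalue ball from the construction, so that a principal eigenfunction on that ball serves as a test function driving the Rayleigh quotient of $\lambda(\Omega_-^*,\var_-^*)$ below $\mu$.
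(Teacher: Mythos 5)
Your forward direction matches the paper's: the dichotomy on whether translates of $\Omega_-$ persist in the limit, plus the diagonalization to upgrade $\Sigma(\Omega_-,\var_-)\subset[0,\mu)$ to $\sup\Sigma(\Omega_-,\var_-)<\mu$, is essentially what the paper does. The reverse direction has a genuine gap. Your ``uniform local gap''---a single $R_0>0$ with $\lambda(\Omega\mid B_{R_0}(x),\const)\notin[\mu-\eta,\mu+\eta]$ for \emph{every} $x\in\Omega$---is false. Take the bulb domain of Section~\ref{sec:bulb} with the ball large enough that $\lambda(\Omega,0)<\mu-2\eta$ and the cylinder's cross-sectional eigenvalue equal to $\mu+3\eta$, so $\bar\Sigma(\Omega,0)$ avoids $[\mu-2\eta,\mu+2\eta]$. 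For any fixed $R_0$, with the ball radius far exceeding $R_0$, the continuous map $x\mapsto\lambda(\Omega\mid B_{R_0}(x),0)$ takes values near $0$ at the ball's center and near $\mu+3\eta$ deep in the cylinder; by the intermediate value theorem it passes through $\mu$ in the transition region. Your contradiction argument for the gap also fails to close: from $\lambda(\Omega\mid B_{R_n}(x_n),\const)\in[\mu-\eta,\mu+\eta]$ you get the lower bound $\lambda(\Omega^*,\const)\geq\mu-\eta$ for the connected limit via monotonicity in $R$, but the matching upper bound is unavailable because $\lambda(\Omega\cap B_{R_n}(x_n),\cdot)$ is an infimum over components and may be realized far from $x_n$, saying nothing about $\Omega^*$. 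Consequently your $U_-$ and $U_+$ neither cover $\Omega$ nor have eigenvalues uniformly separated from $\mu$, and the Lieb step has nothing to bite on.

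The paper sidesteps this with an essential asymmetry between the two scales. It fixes a small scale $2R$, sets $\Omega_+^0=\{x:\lambda(\op{comp}_x(\Omega\cap B_{2R}(x)),\var)>\mu+\delta\}$, and then shows, for $G=\Omega\setminus\Omega_+^0$, that there exists a possibly much larger scale $K$ with $\lambda(\op{comp}_x(\Omega\cap B_K(x)),\var)\leq\mu-\delta$ for \emph{all} $x\in G$. The compactness argument there is the crux: the \emph{upper} bound on the limit eigenvalue comes from the membership $x_n\in G$, which yields $\lambda(\op{comp}_{x_n}(\Omega\cap B_{2R}(x_n)),\var)\leq\mu+\delta$ at the fixed small scale, not from the hypothesis being negated. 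The resulting $\Omega_-^0=\bigcup_{x\in G}\op{comp}_x(\Omega\cap B_K(x))$ genuinely overlaps $\Omega_+^0$; the pieces are not thickenings of a disjoint cover as in your construction. Without this asymmetry your approach cannot be repaired. (Your ``main technical obstacle'' of producing smooth truncations with controlled boundary parameters is real; the paper resolves it with Lemma~\ref{lem:truncation}.)
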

\noindent
The ample--narrow decomposition is far from unique.
Roughly speaking, we can augment $\Omega_-$ and curtail $\Omega_+$ by any bounded region and still preserve the decomposition.

We will make use of one technical operation.
Given open $U \subset \Omega$, we define the nested sets
\begin{equation}
  \label{eq:nest}
  U_r \coloneqq \big\{x \in U \mid \op{dist}_\Omega(x, \Omega \setminus U) > r\big\} \quad \text{for } r > 0.
\end{equation}
We wish to ``cut off'' $\Omega$ so that it lies within $U$ but contains $U_1$.
\begin{definition}
  \label{def:truncation}
  Let $(\Omega, \var)$ be a uniformly $\m{C}^{2,\gamma}$ domain and let $U \subset \Omega$ be open.
  Then $(\Omega_U, \var_U)$ is a \emph{truncation} of $(\Omega, \var)$ in $U$ if it is a uniformly $\m{C}^{2,\gamma}$ open set such that $U_1 \subset \Omega_U \subset U$, $(\Omega_U, \var_U) \preceq (\Omega, \var)$, and $\var_U = \var$ on $\partial \Omega \cap \bar{U}_1$.
\end{definition}
\begin{lemma}
  \label{lem:truncation}
  If $(\Omega, \var)$ is uniformly $\m{C}^{2,\gamma}$ and $U \subset \Omega$ is open, there exists a truncation of $(\Omega, \var)$ in $U$ in the sense of Definition~\textup{\ref{def:truncation}}.
\end{lemma}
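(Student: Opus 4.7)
The plan is to produce $\Omega_U$ as a smooth sub-level set of a regularized intrinsic distance and then define $\var_U$ by cutting off $\var$ with a compactly supported factor. Let $d(x) \coloneqq \op{dist}_\Omega(x, \Omega \setminus U)$ for $x \in \Omega$, with $d = 0$ outside $U$; this is $1$-Lipschitz, $\{d > r\} = U_r$, and hence any open set wedged between two of its super-level sets satisfies the required containment. The difficulty is purely regularity: $d$ is at best Lipschitz, and a raw sub-level set of $d$ may have nonsmooth interior boundary and a sharp corner where it meets $\partial \Omega$.

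First, I would replace $d$ by a Stein-type regularization $\tilde d \in \m{C}^\infty(U)$ satisfying $c d \le \tilde d \le C d$ with $|\nabla^k \tilde d(x)| \le C_k d(x)^{1-k}$ for every $k \ge 1$. The standard construction convolves $d$ locally on scale $\sim d(x)$ in boundary-straightened coordinates; uniform $\m{C}^{2,\gamma}$-smoothness of $\Omega$ gives a uniform tubular neighborhood (of some width $\delta > 0$) on which the straightening has uniform bounds, so that $\tilde d$ inherits uniform derivative estimates all the way up to $\partial \Omega \cap \bar U$. I would then modify $\tilde d$ in an inner collar $\{\op{dist}(\cdot, \partial \Omega) < \delta/2\}$ so that $\partial_\nu \tilde d = 0$ along $\partial \Omega$; writing $\tilde d$ in normal coordinates $(x', x_d)$ and pre-multiplying the normal-variable dependence by a cutoff of $x_d$ gives such a modification without breaking the sandwiching $\tilde d \asymp d$.

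Next, applying Sard's theorem simultaneously to $\tilde d|_\Omega$ and to $\tilde d|_{\partial \Omega}$, I would choose a regular value $t \in (\tfrac13, \tfrac23)$ and set $\Omega_U \coloneqq \{x \in \Omega \mid \tilde d(x) > t\}$. Then $U_1 \subset \Omega_U \subset U$, and the interior part $\partial \Omega_U \cap \Omega = \{\tilde d = t\}$ is a $\m{C}^\infty$ hypersurface with uniform bounds from the derivative estimates on $\tilde d$. Because the normal derivative of $\tilde d$ was arranged to vanish on $\partial \Omega$, the level hypersurface $\{\tilde d = t\}$ meets $\partial \Omega$ orthogonally with matching second-order jets, so after an additional localized mollification supported in a small neighborhood of the seam $\{\tilde d = t\} \cap \partial \Omega$ one obtains a uniformly $\m{C}^{2,\gamma}$ boundary. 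This corner-smoothing is the main technical obstacle: one must ensure the local adjustment preserves $U_1 \subset \Omega_U \subset U$ and does not destroy the uniform bounds elsewhere; the uniform $\m{C}^{2,\gamma}$ tubular-neighborhood structure of $\Omega$ supplies the equicontinuity needed to carry this out with constants independent of the location on $\partial \Omega$.

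Finally, for the boundary parameter, choose a cutoff $\chi \in \m{C}_{\mathrm{b}}^{2,\gamma}(\bar\Omega)$ with $0 \le \chi \le 1$, $\chi \equiv 1$ on a neighborhood of $\bar U_1$, and $\chi \equiv 0$ outside a slightly larger neighborhood contained in the region where $\tilde d > t + \eta$ for some small $\eta > 0$. Define $\var_U$ on $\partial \Omega_U$ by
\begin{equation*}
  \var_U \coloneqq \begin{cases} \chi \var & \text{on } \partial \Omega_U \cap \partial \Omega,\\ 0 & \text{on } \partial \Omega_U \cap \Omega. \end{cases}
\end{equation*}
Since $\chi$ vanishes identically in a neighborhood of the seam where the two portions of $\partial \Omega_U$ meet, the piecewise definition patches into a uniformly $\m{C}^{2,\gamma}$ function on $\partial \Omega_U$. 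Moreover $\chi \var \le \var$, so $(\Omega_U, \var_U) \preceq (\Omega, \var)$ by Definition~\ref{def:inclusion}, and $\var_U = \var$ on $\partial \Omega \cap \bar U_1$ by the choice of $\chi$. This verifies all the conditions of Definition~\ref{def:truncation}.
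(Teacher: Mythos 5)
Your proof follows essentially the same route as the paper's: regularize the intrinsic distance $d(x) = \op{dist}_\Omega(x, \Omega\setminus U)$, pick a Sard-regular threshold $t$, take the super-level set, smooth the seam where it meets $\partial\Omega$, and cut $\var$ off with a uniformly smooth factor supported away from the seam. The differences are mostly cosmetic, though a couple are worth flagging.

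First, you use a Stein-type variable-scale regularization with $\tilde d \asymp d$ and scale-invariant derivative bounds. The paper instead clips $d$ to $[1/3,2/3]$ and then applies a fixed-scale mollification. Since the only level sets you ever use are well inside $U$ (at heights of order one), the clip-then-mollify route gets you uniform $\m{C}^{2,\gamma}$ bounds more cheaply and avoids the blow-up $|\nabla^k\tilde d| \lesssim d^{1-k}$ near $\partial U$ that your construction must then argue is harmless. Relatedly, because your $\tilde d$ satisfies only $cd \le \tilde d \le Cd$, the bare claim that any $t\in(1/3,2/3)$ gives $U_1 \subset \{\tilde d > t\} \subset U$ does not follow unless $t \le c$; you should either tie the choice of $t$ to the Stein constants or note that the regularization scale can be taken small enough that $c$ is close to $1$. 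This is the one genuine gap, though it is easily patched.

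Second, your modification in the collar to force $\partial_\nu\tilde d = 0$ on $\partial\Omega$ is an addition the paper does not make; it ensures $\{\tilde d = t\}$ meets $\partial\Omega$ orthogonally, which arguably gives the subsequent corner-smoothing a cleaner starting point. But the phrase \emph{``meets $\partial\Omega$ orthogonally with matching second-order jets''} overstates what this buys: the seam is still a genuine corner of $\partial\Omega_U$, not a $\m{C}^1$ (let alone $\m{C}^2$) junction, and the localized mollification you invoke afterwards is doing the real work. That step is exactly as hand-waved in the paper (``we have space to uniformly smooth $\partial V$''), so this is a shared debt rather than a defect of your argument; still, I would not suggest the jets match before the smoothing. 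Your cutoff construction of $\var_U$ is equivalent to the paper's partition-of-unity construction and is correct, modulo the same dependence on having $\bar U_1$ uniformly separated from the seam, which again traces back to the sandwiching constants.
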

This lemma comes as no surprise, but for lack of a clean reference, we provide a proof.
\begin{proof}
  We first construct a uniformly smooth set $\Omega_U \subset \Omega$ such that
  \begin{equation}
    \label{eq:strip}
    U_{3/4} \subset \Omega_U \subset U_{1/4}.
  \end{equation}
  Define $f(x) \coloneqq \op{dist}_\Omega(x, \Omega \setminus U)$ and $g(x) \coloneqq \min\{\max\{f(x), \, 1/3\}, \, 2/3\}$.
  Since $\Omega$ is uniformly smooth,
  \begin{equation*}
    \op{dist}_{\R^d}\big(f^{-1}(1/3, 2/3), f^{-1}(1/4, 3/4)^c\big) > 0.
  \end{equation*}
  It follows that there is a mollification $h$ of $g$ such that
  \begin{equation}
    \label{eq:positive-dist}
    \op{dist}_\Omega\big(h^{-1}(1/3, 2/3), f^{-1}(1/4, 3/4)^c\big) > 0.
  \end{equation}

  Sard's theorem provides $c \in (1/3, 2/3)$ such that $h^{-1}(c)$ is a (locally) smooth manifold, perhaps with corners where the level set meets $\partial \Omega$.
  Let ${V \coloneqq h^{-1}(c, 1]}$, which satisfies $U_{3/4} \subset V \subset U_{1/4}$.
  In fact, \eqref{eq:positive-dist} implies that $\partial V$ is uniformly separated from both $U_{1/4}$ and $U_{3/4}^c$.
  We therefore have space to uniformly smooth $\partial V$ and make $\partial V$ smoothly join $\partial \Omega$ while still remaining in the strip $U_{1/4} \setminus \bar{U}_{3/4}$.
  Let $\Omega_U$ be this uniformly smooth modification, which satisfies \eqref{eq:strip}.

  Next, let $Z \coloneqq \bar{U}_{3/4} \setminus (\bar{\Omega \setminus U_{3/4}})$ and $W \coloneqq (\Omega \setminus U_1) \setminus \bar U_1$.
  Then $\{Z, W\}$ is an open cover of $\bar\Omega$ viewed as a manifold with boundary.
  Let $\{\psi, \phi\}$ be a smooth partition of unity on $\bar\Omega$ adapted to this cover.
  Because $\op{dist}_\Omega(Z, \bar\Omega \setminus W) > 0$, we can arrange $\psi$ (and $\phi$) to be uniformly $\m{C}^{2,\gamma}$.

  Finally, let $\var_U \coloneqq \psi \var \tbf{1}_{\partial \Omega}$ on $\partial \Omega_U$.
  Because $\psi$ is uniformly smooth and supported in $U_{3/4}$, $\var_U$ is uniformly smooth and $(\Omega_U, \var_U) \preceq (\Omega, \var)$.
  Moreover, $\psi \equiv 1$ on $U_1$, so $\var_U = \var$ on $\partial \Omega \cap \bar{U}_1$.
  Hence $(\Omega_U, \var_U)$ is a truncation of $(\Omega, \var)$ in $U$.
\end{proof}
We now prove our main geometric result.
\begin{proof}[Proof of Proposition~\textup{\ref{thm:domain-decomposition}}]
  One direction is straightforward.
  Suppose $(\Omega, \const)$ admits a $\mu$-ample--narrow decomposition $(\Omega_-, \Omega_+, \var_-, \var_+)$.
  Let $\Omega^*$ be a connected limit of $\Omega$ along a sequence $(x_n)_{n \in \N} \subset \Omega$, which necessarily inherits the boundary parameter $\const$.
  Then $\Omega^*$ is a connected component of the limit of the translates $\Omega - x_n$.
  Recall that $\op{dist}_\Omega$ denotes the distance induced on the metric space $\Omega$ through its inclusion in $\R^d$.
  If $\op{dist}_\Omega(x_n, \Omega_-) \to \infty$ as $n \to \infty$, then $(\Omega^*, \const)$ is a connected limit of $(\Omega_+, \var_+)$ alone.
  Recall that $(\Omega_+, \var_+)$ is $\mu$-narrow.
  By \eqref{eq:identity}, Lemma~\ref{lem:semicontinuous}, and Definition~\ref{def:ample-narrow},
  \begin{equation*}
    \lambda(\Omega^*, \const) \geq \lambda(\Omega_+, \var_+) > \mu.
  \end{equation*}

  Next, suppose a subsequence of $\op{dist}_\Omega(x_n, \Omega_-)$ remains bounded as $n \to \infty$.
  Restricting to a further subsequence, we can extract a connected limit $(\Omega_-^*, \var_-^*)$ of $(\Omega_-, \var_-)$.
  It is straightforward to check that the partial order $\preceq$ is preserved under connected limits, so by Definition~\ref{def:ample-narrow}, $(\Omega_-^*, \var_-^*) \preceq (\Omega^*, \const)$.
  By Proposition~\ref{prop:lambda}\ref{item:monotone},
  \begin{equation*}
    \lambda(\Omega^*, \const) \leq \lambda(\Omega_-^*, \var_-^*) \leq \sup \Sigma(\Omega_-, \var_-).
  \end{equation*}
  Because $(\Omega_-, \var_-)$ is $\mu$-ample, $\sup \Sigma(\Omega_-, \var_-) \leq \mu$.
  We claim that the inequality is strict.
  To see this, suppose otherwise.
  Then there exists a sequence of connected limits $(\Omega_-^n, \var_-^n)$ of $(\Omega_-, \var_-)$ such that
  \begin{equation}
    \label{eq:ample-approach}
    \lambda(\Omega_-^n, \var_-^n) \nearrow \mu \quad \text{as } n \to \infty.
  \end{equation}
  For each $n \in \N$, $(\Omega_-^n, \var_-^n)$ is constructed from a sequence of translates
  \begin{equation*}
    (\tau_{x_{n,m}}\Omega_-, \tau_{x_{n,m}}\var_-)_{m \in \N}.
  \end{equation*}
  Restricting to a subsequence of $n$, we can assume that the sequence $(\Omega_-^n)_{n \in \N}$ has its own connected limit $(\Omega_-^\infty, \var_-^\infty)$.
  By Lemma~\ref{lem:semicontinuous} and \eqref{eq:ample-approach},
  \begin{equation}
    \label{eq:critical-contradiction}
    \lambda(\Omega_-^\infty, \var_-^\infty) \geq \mu.
  \end{equation}
  However, we can diagonalize along the double sequence $(\tau_{x_{n,m}}\Omega_-, \tau_{x_{n,m}}\var_-)_{n,m \in \N}$ to see that $(\Omega_-^\infty, \var_-^\infty)$ is itself a connected limit of $(\Omega_-, \var_-)$.
  Thus \eqref{eq:critical-contradiction} contradicts the $\mu$-ampleness of $(\Omega_-, \var_-)$.
  It follows that $\sup \Sigma(\Omega_-, \var_-) < \mu$, as claimed.

  If $\Omega^*$ is a connected limit of $\Omega$, we have now shown that one of two alternatives holds:
  \begin{equation*}
    \lambda(\Omega^*, \const) \geq \lambda(\Omega_+, \var_+) > \mu
  \end{equation*}
  or
  \begin{equation*}
    \lambda(\Omega^*, \const) \leq \sup \Sigma(\Omega_-, \var_-) < \mu.
  \end{equation*}
  It follows that $\mu \not \in \bar{\Sigma}(\Omega, \var)$.
  \medskip

  We now tackle the reverse direction.
  Assume $\mu \not \in \bar{\Sigma}(\Omega, \const)$, so there exists $\delta > 0$ such that
  \begin{equation}
    \label{eq:gap}
    \Sigma(\Omega, \const) \cap (\mu - 2 \delta, \mu + 2 \delta) = \emptyset.
  \end{equation}
  We wish to show that $(\Omega, \const)$ admits a $\mu$-ample--narrow decomposition.
  We first construct the narrow part $(\Omega_+, \var_+)$.
  Fix $R > 0$ such that
  \begin{equation}
    \label{eq:large-radius}
    \lambda(B_1, 0) R^{-2} < \delta.
  \end{equation}
  In the following, let $\op{comp}_xU$ denote the connected component of a domain $U$ containing $x$.
  Let $\var \coloneqq \const \tbf{1}_{\partial \Omega}$.
  We use the boundary parameter $\var$ on various subsets of $\Omega$ via restriction.
  Define
  \begin{equation*}
    \Omega_+^0 \coloneqq \big\{x \in \Omega \mid \lambda\big(\op{comp}_x(\Omega \cap B_{2R}(x)), \var\big) > \mu + \delta\big\}.
  \end{equation*}
  This is our ``first pass'' at the narrow part.
  It is not difficult to verify that the map
  \begin{equation*}
    x \mapsto \lambda\big(\op{comp}_x(\Omega \cap B_{2R}(x)), \var\big)
  \end{equation*}
  is continuous, so $\Omega_+^0$ is open.

  Fix $y \in \R^d$ and suppose $B_R(y)$ intersects $\Omega_+^0$.
  Take $x \in \Omega_+^0 \cap B_R(y)$.
  Because $\abs{x - y} < R$, we have $B_R(y) \subset B_{2R}(x)$.
  It follows that
  \begin{equation*}
    \op{comp}_x(\Omega_+^0 \cap B_R(y)) \subset \op{comp}_x(\Omega \cap B_{2R}(x)).
  \end{equation*}
  By Proposition~\ref{prop:lambda}\ref{item:monotone} and the definition of $\Omega_+^0$,
  \begin{equation*}
    \lambda\big(\op{comp}_x(\Omega_+^0 \cap B_R(y)), \var\big) \geq \lambda\big(\op{comp}_x(\Omega \cap B_{2R}(x)), \var\big) > \mu + \delta.
  \end{equation*}
  Now, $x \in \Omega_+^0 \cap B_R(y)$ was arbitrary, so every connected component of $\Omega_+^0 \cap B_R(y)$ has principal eigenvalue at least $\mu + \delta$.
  Therefore
  \begin{equation}
    \label{eq:strictly-narrow}
    \lambda\big(\Omega_+^0 \cap B_R(y), \var\big) = \lambda\big(\Omega_+^0 \mid B_R(y), \var\big) \geq \mu + \delta.
  \end{equation}
  Since this holds for all $y \in \R^d$, Theorem~\ref{thm:Lieb} and our choice \eqref{eq:large-radius} morally imply that
  \begin{equation*}
    \lambda(\Omega_+^0, \var) > \mu.
  \end{equation*}
  This is the desired inequality for $\mu$-narrowness.

  There is a catch, however.
  Generally, $\Omega_+^0$ has corners and $\var$ is discontinuous.
  To circumvent this, we let $(\Omega_+, \var_+)$ be a truncation of $(\Omega, \const)$ in $\Omega_+^0$, as provided by Lemma~\ref{lem:truncation}.
  Using Proposition~\ref{prop:lambda}\ref{item:monotone} and \eqref{eq:strictly-narrow}, we see that
  \begin{equation*}
    \lambda\big(\Omega_+ \mid B_R(y), \var_+\big) \geq \mu + \delta
  \end{equation*}
  for all $y \in \R^d$.
  Thus by Theorem~\ref{thm:Lieb} and \eqref{eq:large-radius},
  \begin{equation*}
    \lambda(\Omega_+, \var_+) > \mu.
  \end{equation*}
  That is, $(\Omega_+, \var_+)$ is $\mu$-narrow.

  Next, we construct the ample part.
  In the following, let $G \coloneqq \Omega \setminus \Omega_+^0$.
  We claim that there exists $K \geq 2$ such that
  \begin{equation}
    \label{eq:uniform-negative}
    \lambda\big(\op{comp}_x(\Omega \cap B_K(x)), \var\big) \leq \mu-\delta \ForAll x \in G.
  \end{equation}
  To see this, suppose otherwise.
  Then there exists a sequence $(x_n)_{n \in \N} \subset G$ such that
  \begin{equation}
    \label{eq:lower}
    \lambda\big(\op{comp}_{x_n}(\Omega \cap B_n(x_n)), \var\big) > \mu-\delta
  \end{equation}
  for all $n \in \N$.
  On the other hand, because $x_n \not \in \Omega_+^0$, we have
  \begin{equation}
    \label{eq:not-narrow}
    \lambda\big(\op{comp}_{x_n}(\Omega \cap B_{2R}(x_n)), \var\big) \leq \mu+\delta.
  \end{equation}
  Extracting a subsequence, we can assume that $(\tau_{x_n} \Omega)_{n \in \N}$ has a connected limit $\Omega^*$.
  Let $\var^* \coloneqq \const\tbf{1}_{\partial \Omega^*}$.
  Morally, Lemma~\ref{lem:semicontinuous} and \eqref{eq:lower} imply that
  \begin{equation*}
    \lambda(\Omega^*, \const) \geq \mu-\delta.
  \end{equation*}
  Strictly speaking, we cannot apply Lemma~\ref{lem:semicontinuous} because $\op{comp}_{x_n}(\Omega \cap B_n(x_n))$ is generally not smooth and $\var$ is generally discontinuous.
  Again, we can bypass these difficulties by dealing with a truncation $(\ti{\Omega}_n, \ti{\var}_n)$ of $(\Omega, \const)$ in $\op{comp}_{x_n}(\Omega \cap B_n(x_n))$.
  By Proposition~\ref{prop:lambda}\ref{item:monotone} and \eqref{eq:lower},
  \begin{equation*}
    \lambda(\ti \Omega_n, \ti \var_n) \geq \mu-\delta.
  \end{equation*}
  Moreover, $(\ti{\Omega}_n, \ti{\var}_n)$ coincides with $(\op{comp}_{x_n}(\Omega \cap B_n(x_n)), \sigma)$ within $B_{n-1}(x_n)$, so the sequence $(\tau_{x_n}\ti \Omega_n, \tau_{x_n}\ti \var_n)_n$ also has connected limit $(\Omega^*, \const)$.
  Therefore Lemma~\ref{lem:semicontinuous} \emph{does} imply that
  \begin{equation}
    \label{eq:lower-contradiction}
    \lambda(\Omega^*, \const) \geq \mu-\delta.
  \end{equation}

  On the other hand, locally uniform $\m{C}^{2, \al}$ convergence implies that
  \begin{equation*}
    (\op{comp}_0(\tau_{x_n}\Omega \cap B_{2R}), \tau_{x_n}\var) \to (\op{comp}_0(\Omega^* \cap B_{2R}), \var^*).
  \end{equation*}
  As noted in Remark~\ref{rem:continuous}, the principal eigenvalue is continuous on uniformly bounded domains.
  Thus Proposition~\ref{prop:lambda}\ref{item:monotone} and \eqref{eq:not-narrow} yield
  \begin{equation}
    \label{eq:lower-semicont}
    \begin{aligned}
      \lambda(\Omega^*, \const) &\leq \lambda\big(\op{comp}_0(\Omega^* \cap B_{2R}), \var^*\big)\\
      &\hspace{1.5cm}= \lim_{n \to \infty} \lambda\big(\op{comp}_0(\tau_{x_n}\Omega \cap B_{2R}), \tau_{x_n}\var\big) \leq \mu + \delta.
    \end{aligned}
  \end{equation}
  In combination with \eqref{eq:lower-contradiction}, we see that $\Omega_*$ is a connected limit of $\Omega$ with
  \begin{equation*}
    \lambda(\Omega^*, \const) \in [\mu-\delta, \mu+\delta].
  \end{equation*}
  This contradicts \eqref{eq:gap}.
  Therefore \eqref{eq:uniform-negative} does hold for some $K \geq 2$.

  Next, define
  \begin{equation*}
    \Omega_-^0 \coloneqq \bigcup_{x \in G} \op{comp}_x(\Omega \cap B_K(x)).
  \end{equation*}
  We note that $G \subset \Omega_-^0$.
  Moreover, because $K \geq 2$,
  \begin{equation}
    \label{eq:union}
    \Omega_-^0 \cup \Omega_+ = \Omega.
  \end{equation}

  Suppose $x \in \Omega_-^0$, so that ${x \in \op{comp}_{x_0}(\Omega \cap B_K(x_0))}$ for some $x_0 \in G$.
  It follows that $B_K(x_0) \subset B_{2K}(x)$ and
  \begin{equation*}
    \op{comp}_{x_0}(\Omega \cap B_K(x_0)) \subset \op{comp}_{x}(\Omega_-^0 \cap B_{2K}(x)).
  \end{equation*}
  By Proposition~\ref{prop:lambda}\ref{item:monotone} and \eqref{eq:uniform-negative}, we have
  \begin{equation}
    \label{eq:ample-ball}
    \lambda\big(\op{comp}_{x}(\Omega_-^0 \cap B_{2K}(x)), \var\big) \leq \lambda\big(\op{comp}_{x_0}(\Omega \cap B_K(x_0)), \var\big) \leq \mu-\delta.
  \end{equation}

  We now smooth $\Omega_-^0$ into $\Omega_-$.
  Define $U \coloneqq \{x \in \Omega \mid \op{dist}_\Omega(x, \Omega_-^0) < 1\}$ and let $(\Omega_-, \var_-)$ be a truncation of $(\Omega, \const)$ in $U$.
  Because $\Omega_-^0 \subset U_1$ in the notation of \eqref{eq:nest}, we have $(\Omega_-^0, \sigma) \preceq (\Omega_-, \var_-)$.
  Now take $x \in \Omega_-$.
  Since $x \in U$, $\op{dist}_\Omega(x, \Omega_-^0) < 1$.
  Hence there exists $x_0 \in \Omega_-^0$ such that
  \begin{equation*}
    \op{comp}_{x_0}(\Omega_-^0 \cap B_{2K}(x_0)) \subset \op{comp}_{x}(\Omega_- \cap B_{2K+1}(x)).
  \end{equation*}
  By Proposition~\ref{prop:lambda}\ref{item:monotone} and \eqref{eq:ample-ball},
  \begin{equation*}
    \op{comp}_{x}(\Omega_- \cap B_{2K+1}(x)) \leq \mu- \delta \ForAll x \in \Omega_-.
  \end{equation*}
  Mimicking the reasoning leading to \eqref{eq:lower-semicont}, we see that every connected limit $(\Omega_-^*, \var_-^*)$ of $(\Omega_-, \var_-)$ satisfies
  \begin{equation*}
    \lambda(\Omega_-^*, \var_-^*) \leq \mu - \delta.
  \end{equation*}
  Therefore $(\Omega_-, \var_-)$ is $\mu$-ample.

  Since $\Omega_-^0 \subset \Omega_-$, \eqref{eq:union} implies that $\Omega = \Omega_- \cup \Omega_+$.
  Similarly, $K \geq 2$ and our truncation constructions of $\var_\pm$ ensure that $\max\{\var_+,\var_-\} = \const$.
  Therefore $(\Omega_-, \Omega_+, \var_-, \var_+)$ is a $\mu$-ample--narrow decomposition of $(\Omega, \const)$.
\end{proof}

\subsection{Uniqueness}
We are nearly in a position to prove Theorem~\textup{\ref{thm:uniqueness}}.
Our final auxiliary result constrains the vanishing of solutions at infinity.
\begin{lemma}
  \label{lem:zero-limit}
  Let $\Omega^*$ be a connected limit of $\Omega$ along $(x_n)_n \subset \Omega$.
  Let $u$ (resp. $v$) be a positive $\m{C}_{\mathrm{b}}^{2, \gamma}$ supersolution (resp. subsolution) of \eqref{eq:main} on $(\Omega, \const)$.
  If $\lambda(\Omega^*, \const) < f'(0),$ then $(\tau_{x_n}u)_n$ does not vanish locally uniformly along any subsequence.
  Conversely, if $\lambda(\Omega^*, \const) > f'(0),$ then $\tau_{x_n}v \to  0$ locally uniformly as $n \to \infty$.
\end{lemma}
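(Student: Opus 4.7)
The plan is to handle the two implications via different but related strategies: the subsolution case by compactness plus the maximum principle of Proposition~\ref{prop:MP}, and the supersolution case by feeding the supersolution directly into the definition~\eqref{eq:lambda} of $\lambda$. Both cases exploit the fact that $f \in \m{C}^{1,\gamma}$ with $f(0)=0$ forces $f(w)/w \to f'(0)$ as $w \to 0^+$, so that any solution that is locally small effectively sees the linearized operator at $0$.

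For the subsolution case I would start by noting that $v \in \m{C}_{\mathrm{b}}^{2,\gamma}(\Omega)$ is uniformly bounded, so by Arzel\`a--Ascoli together with Corollary~\ref{cor:compactness}, the sequence $(\tau_{x_n}v)_n$ has a $\m{C}_{\mathrm{loc}}^{2,\al}$ subsequential limit $v^* \in \m{C}_{\mathrm{b}}^2\big(\bar\Omega^*\big)$ with $v^* \geq 0$. In the limit, $v^*$ inherits $-\Delta v^* \leq f(v^*)$ and $\m{N}_\const v^* \leq 0$. Weak-KPP \ref{hyp:weak} gives $f(v^*) \leq f'(0) v^*$, so $-\m{L}(-v^*) \geq 0$ and $\m{N}_\const(-v^*) \geq 0$ for $\m{L} = \Delta + f'(0)$. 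Since $\lambda(-\m{L},\Omega^*,\const) = \lambda(\Omega^*,\const) - f'(0) > 0$, Proposition~\ref{prop:MP} forces $-v^* \geq 0$, hence $v^* \equiv 0$. Because any subsequence of $(\tau_{x_n}v)_n$ admits a sub-subsequence converging to $0$, the full sequence converges locally uniformly to $0$.

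For the supersolution case I would argue by contradiction, assuming $\tau_{x_n}u \to 0$ locally uniformly in $\bar\Omega^*$ along a subsequence. Fix $\delta > 0$ and $R>0$. On the bounded component $A_n \coloneqq \op{comp}_0(\tau_{x_n}\Omega \cap B_R)$, the local uniform smallness of $\tau_{x_n}u$ combined with $f(s)/s \to f'(0)$ at $0^+$ yields $f(\tau_{x_n}u) \geq (f'(0)-\delta)\tau_{x_n}u$ on $A_n$ for $n$ large. Hence $\tau_{x_n}u$ is a positive supersolution of $-\Delta - (f'(0)-\delta)$ on $A_n$ with $\m{N}_\const \tau_{x_n} u \geq 0$ on $\partial\tau_{x_n}\Omega \cap A_n$ and, trivially from positivity, Dirichlet $\tau_{x_n}u \geq 0$ on the artificial boundary $\tau_{x_n}\Omega \cap \partial A_n$. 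Using $\tau_{x_n}u$ as a test function in~\eqref{eq:lambda} with $\mu = 0$ then yields $\lambda(A_n, \var_n') \geq f'(0) - \delta$, where $\var_n'$ equals $\const$ on the real part and $0$ on the artificial part. The continuity of $\lambda$ on uniformly bounded domains (Remark~\ref{rem:continuous}) passes this inequality to $\lambda(\Omega^* \mid B_R, \const) \geq f'(0)-\delta$; sending $R \to \infty$ via Proposition~\ref{prop:lambda}\ref{item:exhaustion} and then $\delta \to 0$ gives $\lambda(\Omega^*, \const) \geq f'(0)$, contradicting the hypothesis.

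The subsolution case is essentially an exercise in compactness and the maximum principle. The main obstacle lies in the supersolution case, where one cannot extract a nontrivial limit from a pointwise-normalized sequence, since positive supersolutions do not enjoy a Harnack-type local $L^\infty$ bound in terms of a point value. The key move is to bypass normalization entirely and use $\tau_{x_n}u$ itself as the test function in the definition of $\lambda$; the attendant technical price is care with the low-regularity truncations $A_n$ and the invocation of continuity of the principal eigenvalue on uniformly bounded domains.
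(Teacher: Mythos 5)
Your proof is correct in substance but takes a genuinely different route from the paper on both halves, and the difference is worth noting. For the subsolution case, the paper upgrades the limiting subsolution $v^*$ to an honest solution via the parabolic flow $\m{P}^*_\infty$ and then invokes the nonexistence half of Theorem~\ref{thm:existence}; you instead hit $-v^*$ directly with Proposition~\ref{prop:MP}. Both reduce to the same maximum principle in the end, but your version is shorter and avoids the parabolic detour. For the supersolution case, the paper's proof normalizes $w_n \coloneqq \tau_{x_n}u/u(x_n)$ so that $w_n(0)=1$, extracts a locally uniform $\m{C}^2$ limit $w^*>0$, and feeds $w^*$ into \eqref{eq:lambda}. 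You correctly flag that this compactness step is the delicate part: for a genuine supersolution one has only the weak Harnack inequality, which bounds $w_n$ in $L^p_{\mathrm{loc}}$ but not in $L^\infty_{\mathrm{loc}}$, so the extraction of a $\m{C}^2_{\mathrm{loc}}$ limit is not automatic. (In the paper's actual application, $u$ is a restriction of an exact solution of the interior PDE, so full Harnack gives local boundedness and the step is fine; but as the lemma is stated for supersolutions, your concern is legitimate.) Your workaround---plugging $\tau_{x_n}u$ itself into \eqref{eq:lambda} on balls, then passing to the limit via bounded-domain continuity and the exhaustion formula---sidesteps normalization entirely and is cleaner in this respect.

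One small correction to the exhaustion step: from the continuity argument you obtain a lower bound on $\lambda\bigl(\op{comp}_0(\Omega^*\cap B_R), \var^*\bigr)$, i.e., on the eigenvalue of the \emph{origin component}, not on $\lambda(\Omega^*\mid B_R,\const)$, which is the infimum over \emph{all} components of $\Omega^*\cap B_R$ and could be strictly smaller. This does not invalidate the argument: since $\Omega^*$ is connected, the origin components $\op{comp}_0(\Omega^*\cap B_R)$ exhaust $\Omega^*$ as $R\to\infty$, and their eigenvalues decrease to $\lambda(\Omega^*,\const)$ by the same eigenfunction-limit argument used to prove Proposition~\ref{prop:lambda}\ref{item:exhaustion}. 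You should cite that exhaustion-by-components fact rather than Proposition~\ref{prop:lambda}\ref{item:exhaustion} verbatim. You also acknowledge but do not resolve the corner and discontinuous-boundary-parameter issues for $A_n$; as in the paper's proof of Theorem~\ref{thm:domain-decomposition}, this is handled by passing to a smooth truncation (Lemma~\ref{lem:truncation}) before invoking continuity, and it is worth saying so explicitly.
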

\begin{proof}
  We prove the second part first.
  By elliptic regularity, every subsequential limit $v^*$ of $\tau_{x_n}v$ is a nonnegative subsolution of \eqref{eq:main} on $(\Omega^*, \const)$.
  Let $\m{P}_t^*$ denote the parabolic flow on $(\Omega^*, \const)$ by analogy with \eqref{eq:parabolic}.
  Because $v^*$ is a subsolution, $\m{P}_\infty^* v^* \geq v^*$ is a bounded nonnegative solution of \eqref{eq:main} on $(\Omega^*, \const)$.
  By Theorem~\ref{thm:existence}, $\m{P}_\infty^* v^* = 0$ if $\lambda(\Omega^*, \const) > f'(0)$.
  It follows that $v^* = 0$.
  Because the subsequential limit is unique, $\tau_{x_n}v \to 0$ along the entire sequence.

  On the other hand, suppose a supersolution $u$ vanishes along $(x_n)_n$.
  Define
  \begin{equation*}
    w_n \coloneqq \frac{\tau_{x_n}u}{u(x_n)} > 0,
  \end{equation*}
  so that $w_n(0) = 1$.
  Then $(w_n)_n$ has a locally uniform subsequential limit $w^*$ that satisfies
  \begin{equation*}
    \begin{cases}
      -\Delta w^* \geq f'(0)w^* & \text{in } \Omega^*,\\
      \m{N}_\const w^* \geq 0 & \text{on } \partial \Omega^*,\\
      w^*(0) = 1.
    \end{cases}
  \end{equation*}
  By the strong maximum principle, $w^* > 0$ in $\Omega^*$.
  Using $w^*$ in \eqref{eq:lambda}, we see that $\lambda(\Omega^*, \const) \geq f'(0)$.
  Taking the contrapositive, we obtain Lemma~\ref{lem:zero-limit}.
\end{proof}
We can now prove our main uniqueness result.
\begin{proof}[Proof of Theorem~\textup{\ref{thm:uniqueness}}]
  Suppose $f'(0) \not \in \bar{\Sigma}(\Omega, \const)$.
  Then by Proposition~\ref{thm:domain-decomposition}, $(\Omega, \const)$ admits an $f'(0)$-ample--narrow decomposition $(\Omega_-, \Omega_+, \var_-, \var_+)$ in the sense of Definition~\ref{def:ample-narrow}.

  If $\lambda(\Omega, \const) > f'(0)$, then Theorem~\ref{thm:existence} states that \eqref{eq:main} admits no positive bounded solution.
  We cannot have $\lambda(\Omega, \const) = f'(0)$ because $\lambda(\Omega, \const) \in \Sigma(\Omega, \const)$.
  Thus it suffices to treat the case $\lambda(\Omega, \const) < f'(0)$, which implies that $\Omega_-$ is nonempty.
  Moreover, Theorem~\ref{thm:existence} ensures the existence of a positive bounded solution of \eqref{eq:main}.
  Let $u_1, u_2 > 0$ be two such solutions (not necessarily distinct).
  
  First, we claim there exists $\mu < \infty$ such that $u_1 \leq \mu u_2$ in $\Omega_-$.
  For the sake of contradiction, suppose instead that $\inf_{\Omega_-}(u_2 - u_1/n) < 0$ for all $n \in \N$.
  Then for each $n \in \N$, there exists $x_n \in \Omega_{-}$ such that
  \begin{equation}
    \label{eq:negative}
    u_2(x_n) - u_1(x_n)/n < 0.
  \end{equation}
  Because $\var_- \leq \const$ and $u_2 \geq 0$, we have
  \begin{equation*}
    \m{N}_{\var_-} u_2 = \frac{\var_-}{\const} \m{N}_\const u_2 + \left(1 - \frac{\var_-}{\const}\right) u_2 \geq 0.
  \end{equation*}
  So $u_2$ is a supersolution of \eqref{eq:main} on $(\Omega_-, \var_-)$.

  We take $n \to \infty$; for simplicity, we suppress subsequential notation.
  There exist subsequential connected limits $\Omega_-^* \subset \Omega^*$ of $\Omega_- \subset \Omega$ along the sequence $(x_n)_n$.
  Also, there exists a limit $u_2^*$ of $(\tau_{x_n}u_2)_n$ solving \eqref{eq:main} on $(\Omega^*, \const)$.
  By ampleness, Lemma~\ref{lem:zero-limit}, and the strong maximum principle, $u_2^* > 0$ in $\Omega_-^*$.
  Since $\Omega^*$ is connected, $u_2^* > 0$ in $\Omega^*$.
  On the other hand, \eqref{eq:negative} and the boundedness of $u_1$ imply that $u_2^*(0) = 0$.
  Therefore $0 \in \partial \Omega^*$.
  This contradicts the boundary condition $\m{N}_\const u_2^* = 0$ and the Hopf lemma unless $\const = 0$.
  So, for the time being, assume $\const = 0.$
  Let $\nu^*$ denote the outward normal vector field on $\partial \Omega^*$.
  We claim that $\partial_{\nu^*} u_2^*(0) = 0$, which again contradicts the Hopf lemma.

  To see this, suppose for the sake of contradiction that $m \coloneqq -\partial_{\nu^*} u_2^*(0) > 0$.
  Recall that $\nu$ denotes the outward normal on the original boundary $\partial\Omega$.
  By the collar neighborhood theorem and the uniform regularity of $\Omega$, there exists $\delta > 0$ such that $\nu$ has a natural smooth extension to the $\delta$-neighborhood $\partial^\delta \Omega$ of the boundary.
  Precisely, the vector field $\nu$ identifies $\partial^\delta \Omega$ with the product manifold $\partial \Omega \times [0, \delta)$.
  We denote product coordinates by $x = (\bar{x}, h)$, so that the ``height'' $h$ denotes distance from $\partial \Omega$ and $\pa{}{h} = \nu$.

  Since $0 \in \partial\Omega^*$, $d(x_n, \partial \Omega) \to 0$ as $n \to \infty$.
  In particular, there exists $N_0 \in \N$ such that $x_n \in \partial^\delta \Omega$ for all $n \geq N_0$.
  When $n \geq N_0$, the derivative $\partial_\nu u_2(x_n)$ is well-defined.
  Elliptic regularity implies that
  \begin{equation*}
    -\partial_\nu u_2(x_n) \to -\partial_{\nu^*} u_2^*(0) = m > 0 \quad \text{as } n \to \infty.
  \end{equation*}
  Moreover, $\m{C}^{1,\al}$ elliptic estimates provide $\eps > 0$ and $N_1 \geq N_0$ such that for all $n \geq N_1$,
  \begin{equation}
    \label{eq:normal-lower}
    -\partial_\nu u_2(x_n) \geq m/2 \quad \text{on } \bar{B}_\eps(\bar{x}_n) \times (0, \eps).
  \end{equation}
  Here $\bar{x}_n$ denotes the projection of $x_n$ to $\partial\Omega$ and $\bar{B}_\eps(\bar{x}_n)$ denotes its $\eps$-neighborhood in $\partial \Omega$ with Riemannian metric induced from $\R^d$.
  The height $h_n$ of $x_n$ tends to $0$, so we may assume that $h_n < \eps$ for all $n \geq N_1$.

  On the other hand, uniform $\m{C}^1$-regularity implies that $M \coloneqq \sup_\Omega \abs{\nab u_1} < \infty$.
  Integrating from the boundary, \eqref{eq:normal-lower} yields
  \begin{equation}
    \label{eq:linear-bounds}
    u_2(\bar{x}, h) \geq \frac{m}{2}h \And u_1(\bar{x}, h) \leq Mh \quad \text{on } \bar{B}_\eps(\bar{x}_n) \times (0, \eps)
  \end{equation}
  for all $n \geq N_1$.
  Since $x_n$ lies in the neighborhood, \eqref{eq:linear-bounds} contradicts \eqref{eq:negative} when $n > 2M/m$.

  We now return to the general setting $\const \in [0, 1]$.
  Together, these contradictions imply the existence of $\mu < \infty$ such that $u_1 \leq \mu u_2$ in $\Omega_-$.
  Therefore
  \begin{equation*}
    \ubar{\mu} \coloneqq \inf \left\{\mu > 0 \mid u_1 \leq \mu u_2 \enspace \text{in } \Omega_{-}\right\}
  \end{equation*}
  is finite.
  By continuity, $w \coloneqq \ubar{\mu}u_2 - u_1 \geq 0$ in $\Omega_{-}$.
  We claim that $\ubar{\mu} \leq 1$.
  Suppose otherwise for the sake of contradiction.

  Consider $w$ on $\Omega_+$.
  The strong KPP property \ref{hyp:strong} implies that
  \begin{equation*}
    -\Delta w = \ubar{\mu}f(u_2) - f(u_1) > f(\ubar\mu u_2) - f(u_1) \eqqcolon q w.
  \end{equation*}
  Setting $\m{L} \coloneqq \Delta + q$, this reads $-\m{L} w > 0$.
  Now $u_i \in \m{C}_{\mathrm{b}}^{2, \gamma}$ and $f \in \m{C}^{1, \gamma}$, so the difference quotient $q$ is uniformly $\m{C}^\gamma$.
  Moreover, \ref{hyp:strong} yields $q \leq f'(0)$.

  Both $u_1$ and $u_2$ satisfy the $\const$-boundary condition on $\partial \Omega$, so $\m{N}_\const w = 0$ there.
  Now $(\Omega_+, \var_+) \preceq (\Omega, \const)$, so $\var_+ = 0$ on $\partial \Omega_+ \cap \Omega$ and $\var_+ \leq \const$ on $\partial \Omega_+ \cap \partial \Omega$.
  Because $\Omega = \Omega_- \cup \Omega_+$, we have $\partial \Omega_+ \cap \Omega \subset \Omega_-$.
  Hence $w \geq 0$ there, and it follows that $\m{N}_{\var_+} w = w \geq 0$ on $\partial \Omega_+ \cap \Omega$.
  Next, Definition~\ref{def:ample-narrow} ensures that $\var_+ = \const$ on $\partial \Omega_+ \setminus \partial \Omega_-$.
  There, we have $\m{N}_{\var_+} w = \m{N}_\const w = 0$.
  Finally, on $\partial \Omega_+ \cap \partial \Omega_-$, we have $w \geq 0$ and $\var_+ \leq \const$, so
  \begin{equation*}
    \m{N}_{\var_+} w = \frac{\var_+}{\const} \m{N}_\const w + \left(1 - \frac{\var_+}{\const}\right) w \geq 0.
  \end{equation*}
  We have thus shown that
  \begin{equation*}
    \begin{cases}
      -\m{L} w > 0 & \text{in } \Omega_+,\\
      \m{N}_{\var_+} w \geq 0 & \text{on } \partial \Omega_+.
    \end{cases}
  \end{equation*}
  Because $(\Omega_+, \var_+)$ is $f'(0)$-narrow, we have $\lambda(\Omega_+, \var_+) > f'(0)$.
  Since $q \leq f'(0)$, this implies that $\lambda(-\m{L}, \Omega_+, \var_+) > 0$.
  By Proposition~\ref{prop:MP}, $w \geq 0$ in $\Omega_+$, and hence in the whole domain $\Omega$.

  For each $n \in \N$, there exists $y_n \in \Omega_{-}$ such that
  \begin{equation}
    \label{eq:negative-finish}
    \left(\ubar{\mu} - \frac{1}{n}\right) u_2(y_n) - u_1(y_n) < 0.
  \end{equation}
  We take limits along $(y_n)_n$ and suppress subsequential notation.
  There exist connected limits $\Omega_-^\star \subset \Omega^\star$ and limiting solutions $u_i^\star$ for $i\in\{1, 2\}$ of \eqref{eq:main} on $(\Omega^\star, \const)$.
  Then $w^\star \coloneqq \ubar{\mu} u_2^\star - u_1^\star$ is a limit of $w$.
  It follows that $w^\star \geq 0$.
  As for $u_2^\star$ above, $f'(0)$-ampleness and Lemma~\ref{lem:zero-limit} imply that $u_i^\star > 0$ in $\Omega_-^\star$ and hence in $\Omega^\star.$
  By the strong KPP property \ref{hyp:strong} and the assumption $\ubar{\mu} > 1$, $w^\star$ satisfies
  \begin{equation*}
    -\Delta w^\star + q^\star w^\star > 0
  \end{equation*}
  for a bounded difference quotient $q^\star$.
  Thus by the strong maximum principle, $w^\star > 0$ in $\Omega^\star$.
  However, \eqref{eq:negative-finish} implies that $w^\star(0) = 0$, so $0 \in \partial \Omega^\star$.
  This contradicts the Hopf lemma unless $\const = 0$, so temporarily assume this.

  Again, we claim that $\partial_{\nu^\star} w^\star(0) = 0$, which contradicts the Hopf lemma.
  Supposing otherwise, we have ${m^\star \coloneqq -\partial_{\nu^\star} w^\star(0) > 0}$.
  We follow the reasoning from above.
  Uniform $\m{C}^{1,\al}$-regularity implies that $-\partial_\nu w \geq m^\star/2$ in a uniform neighborhood of $y_n$.
  However, $\partial_\nu u_2$ is bounded, so ${-\partial_\nu(w - u_2/n) \geq 0}$ near $y_n$ once $n$ is sufficiently large.
  Integrating from the boundary, we see that $w(y_n) - u_2(y_n)/n > 0$ for such $n$, contradicting \eqref{eq:negative-finish}.

  Returning to the general case $\const \in [0, 1]$, these contradictions imply that $\ubar{\mu} \leq 1$, so $u_1 \leq u_2$ in $\Omega_-$.
  Mimicking the maximum principle argument on $\Omega_+$, we can extend this relation to $\Omega_+,$ and thus to the entire domain $\Omega$.
  By symmetry, $u_1 = u_2$.
\end{proof}
\begin{figure}[t]
  \centering
  \includegraphics[width=0.8\textwidth]{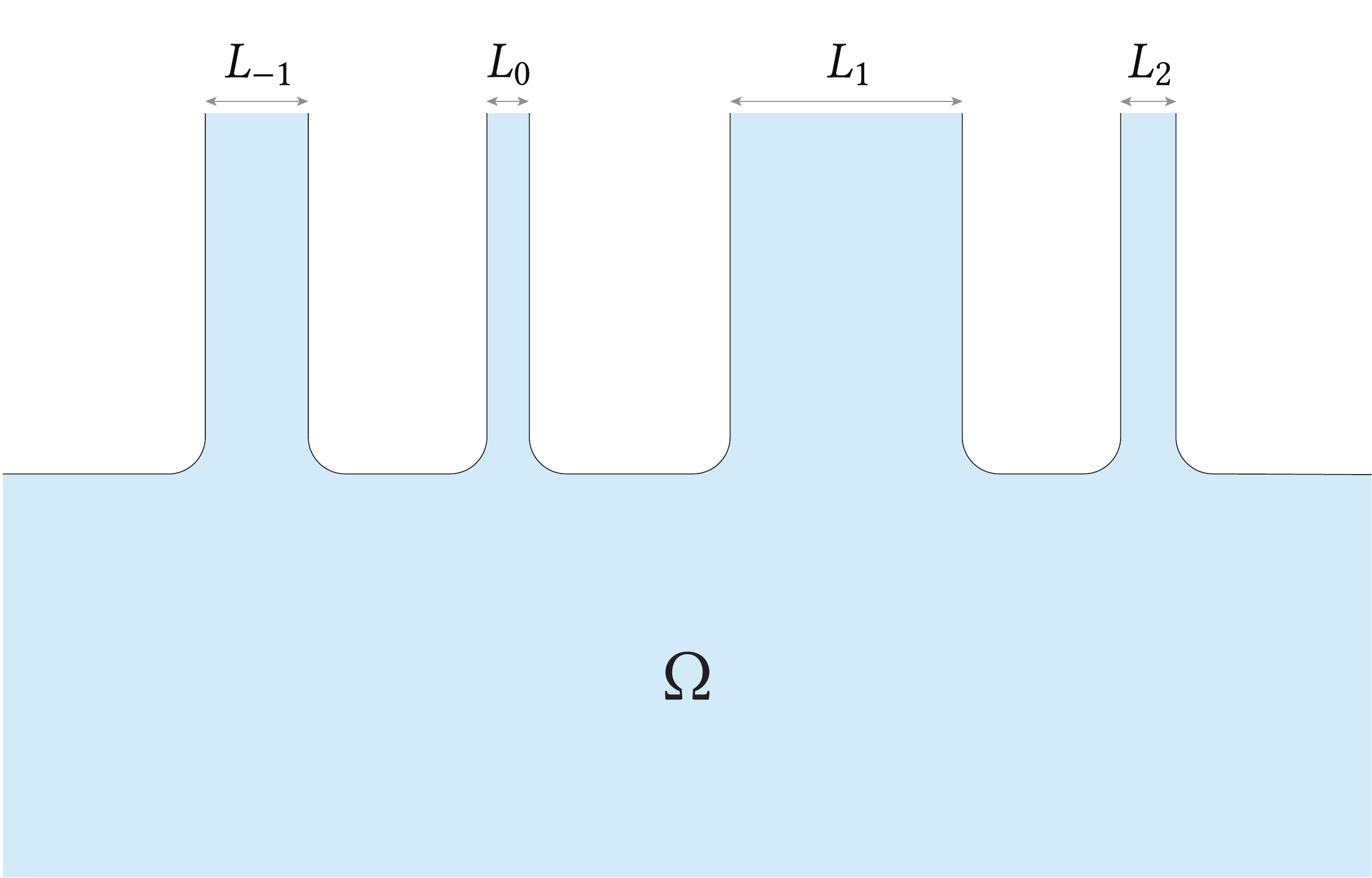}
  \caption{A comb domain in $\R^2$ whose teeth have width $L_i$ for $i \in \Z$.}
  \label{fig:noncritical}
\end{figure}
To understand when Theorem~\ref{thm:uniqueness} applies, it may help to consider the comb domain $\Omega$ in Figure~\ref{fig:noncritical}.
Because $\Omega$ contains the lower half plane (and thus arbitrarily large balls), we have $\lambda(\Omega, 0) = 0$.
If we take a sequence of points ascending ``tooth~$i$'', the corresponding connected limit is a strip of width $L_i$.
Moreover, if $L$ is a subsequential limit of $(L_i)_{i \in \Z}$, then we can take an ascending sequence of points from that subsequence of teeth to obtain a strip of width $L$ as a connected limit.
The principal eigenvalue of a strip of width $L$ is $\pi^2 L^{-2}$.
Thus, these observations imply that
\begin{equation*}
  \Sigma(\Omega, 0) = \{0\} \cup \overline{\{\pi^2L_i^{-2}\}}_{i \in \Z}.
\end{equation*}
We note that, in this case, $\Sigma(\Omega, 0)$ is closed.
Therefore Theorem~\ref{thm:uniqueness} applies to $\Omega$ if and only if $f'(0) \not\in \Sigma(\Omega, 0)$.
Equivalently, if and only if
\begin{equation*}
  \inf_{i \in \Z} \abs{f'(0) - \pi^2 L_i^{-2}} > 0.
\end{equation*}
In other words, to apply Theorem~\ref{thm:uniqueness}, $\Omega$ must be \emph{noncritical} in the sense that the widths $L_i$ uniformly avoid the critical value $\pi f'(0)^{-1/2}$.
\begin{remark}
  It seems plausible that the limit spectrum $\Sigma(\Omega, \const)$ is \emph{always} closed, but we are not presently able to show this.
\end{remark}
We close this section with the complete classification of positive bounded solutions in periodic domains.
\begin{proof}[Proof of Corollary~\textup{\ref{cor:periodic}}]
  By Theorem~\ref{thm:existence}, \eqref{eq:main} admits a positive bounded solution if and only if ${\lambda(\Omega, \const) < f'(0)}$.
  We assume this condition.
  
  Since $\Omega$ is periodic, every connected limit of $\Omega$ is a translate of $\Omega$ itself.
  It follows that $\lambda(\Omega, \const)$ is the unique limit eigenvalue.
  Hence the critical value $f'(0)$ is not in $\bar\Sigma(\Omega, \const) = \{\lambda(\Omega, \const)\}$.
  By Theorem~\ref{thm:uniqueness}, the positive bounded solution is unique.
\end{proof}

\subsection{Weak-KPP nonuniqueness}

We next demonstrate the importance of the strong KPP hypothesis \ref{hyp:strong} by constructing examples of nonuniqueness for weak-KPP reactions.
\begin{proof}[Proof of Proposition~\textup{\ref{prop:nonuniqueness}}]
  Let $\Omega$ be a bounded domain with boundary parameter ${\const \in [0, 1)}$.
  For a graphical representation of the various reactions used in this proof, see Figure~\ref{fig:reaction}.
  \begin{figure}[t]
    \centering
    \includegraphics[width=\linewidth]{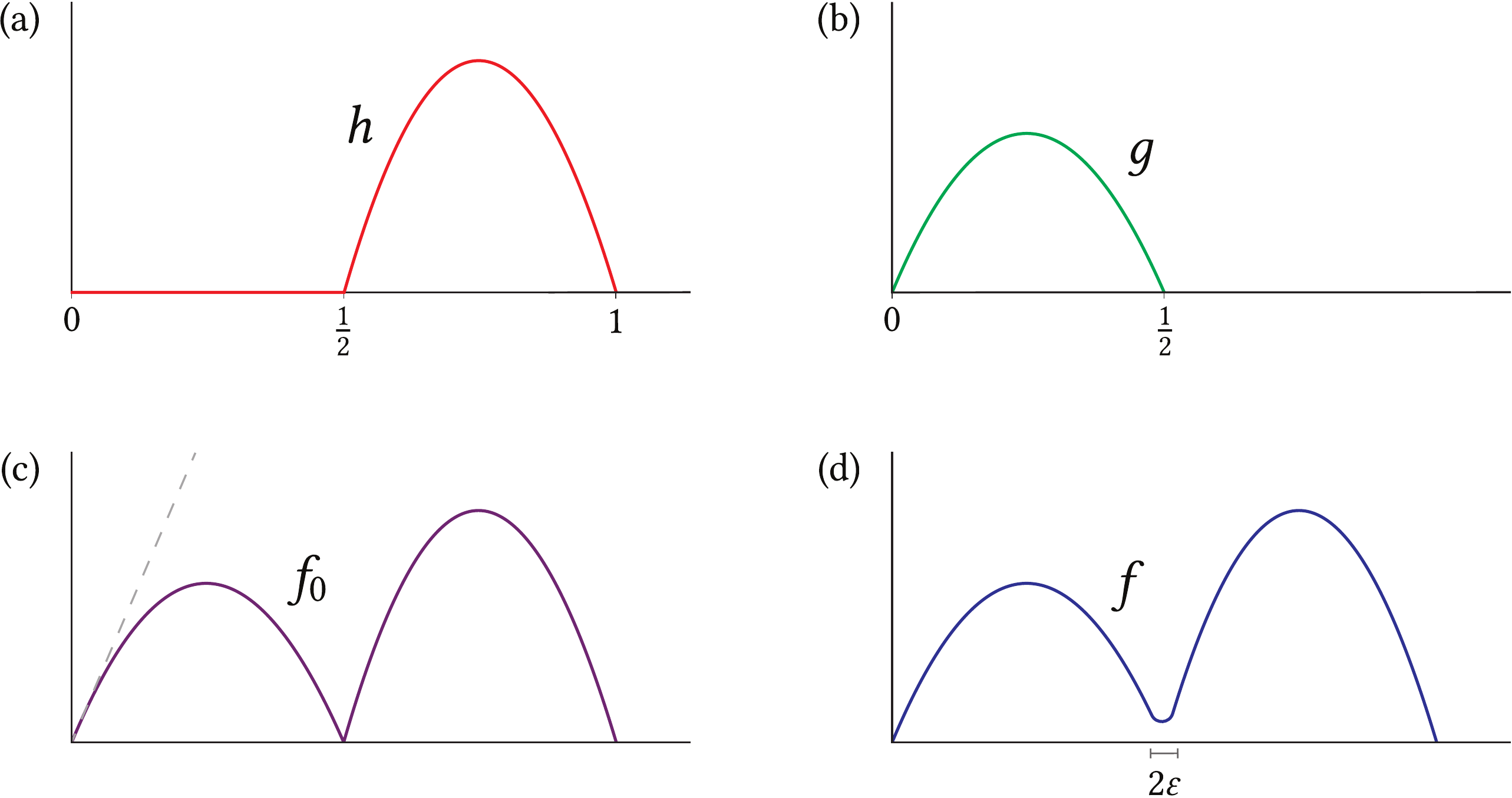}
    \caption[Reactions in the proof of nonuniqueness.]
    {
      Reactions in the proof of Proposition~\ref{prop:nonuniqueness}.
      (a) An ignition reaction.
      (b) A weak-KPP reaction on a restricted interval.
      (c) A nonnegative reaction with multiple solutions satisfying the weak-KPP inequality \ref{hyp:weak}.
      (d) A true weak-KPP reaction with multiple solutions.
    }
    \label{fig:reaction}
  \end{figure}

  Let $h \colon \R \to [0, \infty)$ satisfy $h|_{(1/2,1)^c} \equiv 0$, $h|_{(1/2,1)} > 0$, and $h \in \m{C}^{1,\gamma}([1/2, 1])$.
  Then $h$ is a so-called ``ignition reaction'' with ignition temperature $1/2$.
  We define its antiderivative
  \begin{equation*}
    H(s) \coloneqq \int_0^s h(r) \d r.
  \end{equation*}

  We claim that \eqref{eq:main} admits a positive bounded solution with reaction $h$ provided $h$ is sufficiently large.
  To see this, note that solutions of \eqref{eq:main} with reaction $h$ are critical points of the energy functional
  \begin{equation*}
    E[v] \coloneqq \int_\Omega \left[\frac{1}{2} \abs{\nab v}^2 - H(v)\right] + \const^{-1}(1 - \const)\int_{\partial \Omega} v^2,
  \end{equation*}
  where we interpret the boundary term as $0$ if $\const = 0$ and $v = 0$ on $\partial \Omega$.
  By making $h$ large near $s = 3/4$, say, we can evidently arrange that $\inf E < 0$.
  Then the direct method from the calculus of variations yields a minimizer $u_+^h$ such that $0 < u_+^h < 1$.
  This is a positive bounded solution of \eqref{eq:main}.
  We claim that $\sup u_+^h > 1/2$.
  Otherwise, $h(u_+^h) = 0$ and $u_+^h$ is harmonic.
  It thus achieves its maximum on $\partial \Omega,$ which contradicts $\m{N}_\const u_+^h = 0$ because $\const < 1$.
  So indeed $\sup u_+^h > 1/2$.

  Next, let $g$ be a weak-KPP reaction on the restricted interval $(0, 1/2)$ such that
  \begin{equation}
    \label{eq:slope}
    g'(0) > \max\left\{\lambda(\Omega, \const),\, \sup_{s \in [1/2,1]} \frac{h(s)}{s}\right\}.
  \end{equation}
  By Theorem~\ref{thm:existence}, \eqref{eq:main} has a positive bounded solution $u_-$ with reaction $g$.
  Because $g$ vanishes on $[1/2, \infty)$, we have $0 < u_- \leq 1/2$.
  Moreover, the boundary condition $\m{N}_\const u_- = 0$ ensures that $u_-$ is not constant, so by the strong maximum principle and the boundedness of $\Omega$, $0 < u_- \leq \sup u_- < 1/2$.
  We define
  \begin{equation}
    \label{eq:short}
    \eps \coloneqq 1/2 - \sup u_- > 0.
  \end{equation}

  Now let $f_0 \coloneqq g + h$.
  By \eqref{eq:slope}, $f_0$ satisfies the weak-KPP inequality \ref{hyp:weak}.
  Because $h \equiv 0$ on $[0, 1/2]$, $f_0(u_-) = g(u_-)$ and $u_-$ solves \eqref{eq:main} with reaction $f_0$.
  While the same cannot be said of $u_+^h$, we do have $f_0(u_+^h) \geq h(u_+^h)$.
  It follows that $u_+^h$ is a \emph{sub}solution for the reaction $f_0$.
  Using the parabolic semigroup $\m{P}_t^{f_0}$ defined by \eqref{eq:parabolic} with reaction $f_0$, we can construct a solution $u_+^{f_0} \coloneqq \m{P}_\infty^{f_0} u_+^h \geq u_+^h$ of \eqref{eq:main} with reaction $f_0$.
  It follows that $\sup u_+^{f_0} > 1/2$.
  Since $\sup u_- < 1/2$, we have in particular that $u_- \neq u_+^{f_0}$.
  That is, $f_0$ supports two distinct positive bounded solutions.

  The reaction $f_0$ has one deficiency: it vanishes at the intermediate point $s = 1/2$ and is merely Lipschitz there.
  Recall the ``gap'' $\eps$ from \eqref{eq:short}.
  We can increase $f_0$ on the interval $(1/2-\eps, 1/2+\eps)$ so that it becomes $\m{C}^{1, \gamma}$ and strictly positive on $(0, 1)$.
  By making a sufficiently small change, we can also ensure that the modified reaction lies under the tangent line $f_0'(0)s$.
  Let $f$ denote this modification of $f_0$.
  By construction, $f$ is a true weak-KPP reaction.

  Because $f = f_0 = g$ on $[0, 1/2 - \eps]$, $u_-$ is a solution of \eqref{eq:main} with reaction $f$.
  Moreover, because $f \geq f_0$, $u_+^{f_0}$ is a subsolution for the reaction $f$.
  Again, there exists a solution $u_+$ of \eqref{eq:main} with reaction $f$ such that $u_+ \geq u_+^{f_0}$.
  Then $\sup u_+ > 1/2$, so $u_+ \neq u_-$.
  Therefore $f$ is a weak-KPP reaction such that \eqref{eq:main} admits two distinct positive bounded solutions on $(\Omega, \const)$.
\end{proof}
In our definition of $f_0 = g + h$, $g$ only needs to be a sufficiently large KPP reaction.
In particular, if we fix $g$ and define $f_\mu \coloneqq f_0 + \mu g$ for $\mu \geq 0$, then $f_\mu$ would also support multiple positive bounded solutions on $\Omega$.
This corresponds to a taller first ``hump'' in Figure~\ref{fig:reaction}(c).

It is interesting to consider the behavior of solutions under the limit $\mu \to \infty$.
Let $u_-^\mu$ and $u_+^\mu$ denote the minimal and maximal solutions of \eqref{eq:main} on $\Omega$ with reaction $f_\mu$.
As $f_\mu$ tends to $+\infty$ on $(0, 1/2)$, it will push solutions into the range $[1/2, 1]$ in the interior of $\Omega$.
Indeed, it is straightforward to check that $u_-^\mu \to 1/2$ locally uniformly in $\Omega$ as $\mu \to \infty$.
Thus $u_-^\mu$ develops a boundary layer around $\partial \Omega$ wherein it rapidly transition from $0$ to $1/2$.

It is reasonable to expect $u_+^\mu$ to develop a similar boundary layer.
This suggests that $u_+^\mu$ converges locally uniformly to a solution of the problem
\begin{equation*}
  \begin{cases}
    -\Delta u = h(u) & \text{in } \Omega,\\
    u = 1/2 & \text{on } \partial \Omega,\\
    u > 1/2 & \text{in } \Omega.
  \end{cases}
\end{equation*}
If $h$ is strong-KPP on the interval $[1/2, 1]$, then this solution is unique.
Finally, we observe that Morse theory suggests the existence of at least one unstable solution between $u_-^\mu$ and $u_+^\mu$.
We conjecture that all such solutions converge to $1/2$ locally uniformly as $\mu \to \infty$ if $h$ is strong-KPP on $[1/2, 1]$.

\subsection{The hair-trigger effect}
To close the section, we prove the hair-trigger effect for strong-KPP reactions.
\begin{proof}[Proof of Proposition~\textup{\ref{prop:hair-trigger}}]
  First assume $f$ is positive and $\lambda(\Omega, \const) < f'(0)$.
  By Theorem~\ref{thm:existence}, \eqref{eq:main} admits a positive bounded solution.
  Because $\lambda(\Omega, \const) < f'(0)$, Proposition~\ref{prop:lambda}\ref{item:exhaustion} implies that ${\lambda(\Omega \mid B_R, \const) < f'(0)}$ for some large $R > 0.$
  Because $f$ is $\m{C}^{1, \gamma}$, there exists $\eta \in (0, 1)$ such that
  \begin{equation*}
    \inf_{s \in (0, \eta]} \frac{f(s)}{s} \geq \lambda(\Omega \mid B_R, \const).
  \end{equation*}
  Let $\varphi$ denote the positive principal eigenfunction associated to $\lambda(\Omega \mid B_R, \const)$ normalized by $\norm{\varphi}_\infty = 1$, which we extend by $0$ to all of $\Omega$.
  Then $\eps \varphi$ is a subsolution of \eqref{eq:main} for all $\eps \in [0, \eta]$.

  Let $u$ be a positive bounded solution of \eqref{eq:main}, so $u \geq \eps \varphi$ for some $\eps \in (0, \eta]$.
  Let $\bar{\eps}$ be the largest such $\eps$, and suppose for the sake of contradiction that $\bar \eps \in (0, \eta)$.
  Then $u \geq \bar \eps \varphi$, and by the strong maximum principle $u > \bar \eps \varphi$ in $\Omega$.
  Then the compactness of $\bar B_R$ and the Hopf lemma imply that $u \geq (\bar \eps + \delta) \varphi$ for some $\delta > 0$, contradicting the definition of $\bar\eps$.
  From this, we see that $u \geq \eta \varphi$.
  
  Recall the parabolic semigroup $(\m{P}_t)_{t \geq 0}$ associated with \eqref{eq:parabolic}.
  Because $\eta \varphi$ is a subsolution of \eqref{eq:main}, $\m{P}_t(\eta \varphi)$ is increasing in $t$ and thus has a limit $u_- \coloneqq \m{P}_\infty(\eta \varphi)$, which is a positive bounded solution of \eqref{eq:main}.
  By the parabolic comparison principle, $u \geq u_-$.
  That is, $u_-$ is minimal.
  Similarly, every positive bounded solution is bounded by the supersolution $1$, so $u_+ \coloneqq \m{P}_\infty 1$ is the maximal solution of \eqref{eq:main}.
  Every positive bounded solution $u$ of \eqref{eq:main} satisfies $u_- \leq u \leq u_+$.

  Now let $v$ solve \eqref{eq:parabolic} with $v(0, \anon)$ bounded, nonnegative, and not identically $0$.
  By the parabolic strong maximum principle and the Hopf lemma, $v(1, \anon) \geq \eps \varphi$ for some $\eps \in (0, \eta]$.
  Because $\eps \varphi$ is a (nonzero) subsolution of \eqref{eq:main}, $\m{P}_\infty(\eps \varphi)$ is a positive bounded solution of \eqref{eq:main}.
  Therefore $\m{P}_\infty(\eps \varphi) \geq u_-$.
  In fact, the comparison principle implies that $\m{P}_\infty(\eps \varphi) = u_-$ for all $\eps \in (0, \eta]$.
  Also, comparison yields $v(t, \anon) \geq \m{P}_{t-1}(\eps \varphi)$ for all $t \geq 1$.
  Taking $t \to \infty$, we see that
  \begin{equation}
    \label{eq:liminf}
    \liminf_{t \to \infty} v(t, \anon) \geq u_-.
  \end{equation}
  Because $M \coloneqq \sup v(0, \anon) < \infty$, similar reasoning implies that
  \begin{equation}
    \label{eq:limsup}
    \limsup_{t \to \infty} v(t, \anon) \leq \m{P}_\infty M = \m{P}_\infty 1 = u_+.
  \end{equation}

  Now suppose in addition that $f$ is strong-KPP and ${f'(0) \not\in \bar\Sigma(\Omega, \const)}$.
  By Theorem~\ref{thm:uniqueness}, \eqref{eq:main} admits a unique solution $u$.
  It follows that $u_- = u_+ = u$.
  Combining \eqref{eq:liminf} and \eqref{eq:limsup}, we obtain
  \begin{equation*}
    \lim_{t \to \infty} v(t, \anon) = u.
  \end{equation*}
  Standard parabolic estimates imply that this limit holds locally uniformly in $\bar \Omega$.
\end{proof}

\section{A critical example}
\label{sec:bulb}
Recall Conjecture~\ref{conj:KPP}: we expect uniqueness to hold in general for strong-KPP reactions.
We would like to adapt the proof of Theorem~\ref{thm:uniqueness} to general domains, but critical limit domains pose difficulties.
To probe this issue, consider the ``bulb domain'' in Figure~\ref{fig:bulb}.
\begin{figure}[ht]
  \centering
  \includegraphics[width = 0.9\linewidth]{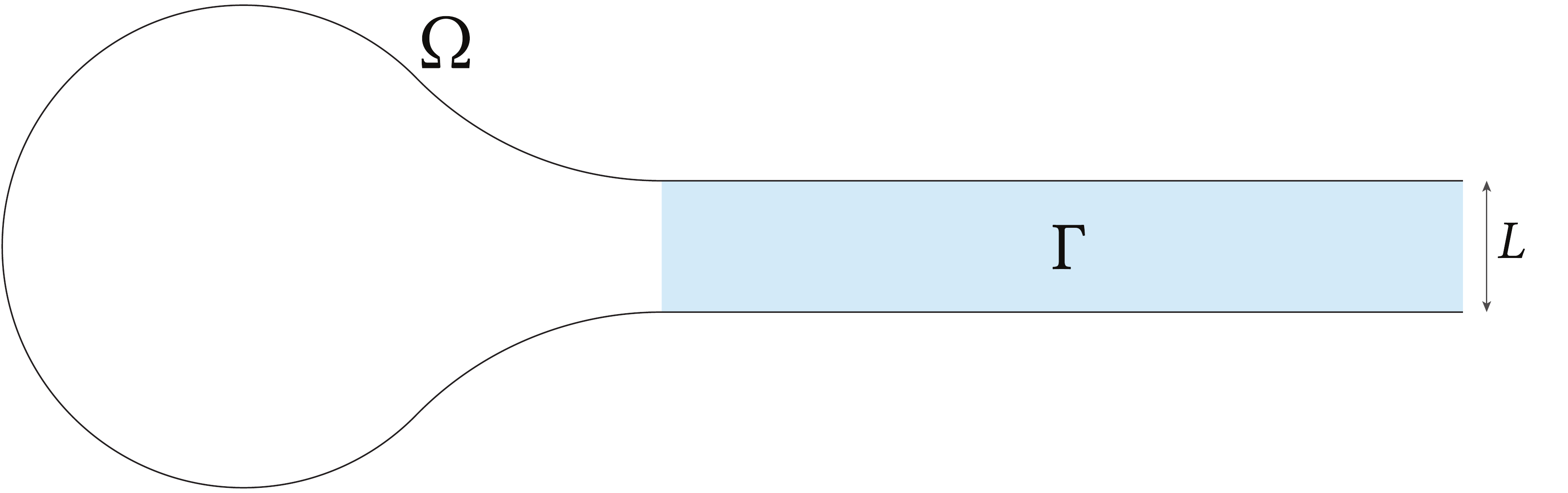}
  \caption{A bulb domain in $\R^2$.}
  \label{fig:bulb}
\end{figure}

\noindent
We impose Dirichlet conditions ($\const = 0$) and assume that the round portion is sufficiently large that $\lambda(\Omega, 0) < f'(0)$.
Then by Theorem~\ref{thm:existence}, \eqref{eq:main} admits at least one positive bounded solution on $\Omega$.
We choose coordinates so that the shaded region $\Gamma$ is $\R_+ \times (0, L)$.

We can explicitly compute the limit spectrum of $\Omega$.
Let $\Omega^*$ be a connected limit of $\Omega$ along a sequence $(x_n, y_n)_{n\in \N}$.
If $(x_n)_n$ is bounded, then $\Omega^*$ is a translate of $\Omega$ and $\lambda(\Omega^*, 0) = \lambda(\Omega, 0)$.
On the other hand, if $(x_n)_n$ tends to infinity, then $\Omega^*$ is a cylinder of width $L$, and one can easily use \eqref{eq:lambda} and Proposition~\ref{prop:lambda}\ref{item:variation} to check that
\begin{equation*}
  \lambda(\Omega^*, 0) = \lambda\big((0, L), 0\big) = \frac{\pi^2}{L^2}.
\end{equation*}
Therefore
\begin{equation*}
  \Sigma(\Omega, 0) = \big\{\lambda(\Omega, 0), \, \pi^2/L^2\big\}.
\end{equation*}
From this and our assumption that $\lambda(\Omega, 0) < f'(0)$, we see that Theorem~\ref{thm:uniqueness} applies if and only if $L \neq \pi f'(0)^{-1/2}$.

In this section, we assume that $L = \pi f'(0)^{-1/2}$ is critical.
In this case, the nonlinear structure of $f$ begins to play a role.
We assume that $f$ has the form
\begin{equation}
  \label{eq:nonlinear}
  f(s) = f'(0)s - Bs^2 + \m{O}(s^3)
\end{equation}
for some $B > 0$.
\begin{lemma}
  \label{lem:critical-bulb}
  Let $f$ be a strong-KPP reaction satisfying \eqref{eq:nonlinear}.
  Then every solution $u$ of \eqref{eq:main} on $(\Omega, 0)$ satisfies
  \begin{equation}
    \label{eq:polynomial-decay}
    u(x, y) \sim \frac{9\pi}{4B x^2} \sin\left(\frac{\pi y}{L}\right)
  \end{equation}
  uniformly in $y$ as $x \to \infty$.
\end{lemma}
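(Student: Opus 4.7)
The plan is to decompose $u$ in the cross-sectional Dirichlet basis $\phi_k(y) \coloneqq \sin(k\pi y/L)$ on the channel $\Gamma = \R_+ \times (0, L)$, writing $u(x, y) = \sum_{k \geq 1} A_k(x)\phi_k(y)$. Criticality forces $(\pi/L)^2 = f'(0)$, so the principal mode $A_1$ is linearly degenerate while each higher mode $A_k$ ($k \geq 2$) is governed by the coercive operator $-\partial_x^2 + \mu_k$ with $\mu_k \coloneqq (k\pi/L)^2 - f'(0) > 0$. The target is a scalar ODE for $A_1$ whose asymptotics determine those of $u$.

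The first preliminary is that $u(x, y) \to 0$ uniformly in $y$ as $x \to \infty$. If this failed, elliptic compactness would furnish a nontrivial bounded positive solution $u^\star$ on the full strip $\R \times (0, L)$ with Dirichlet data. Projecting the limit equation onto $\phi_1$ and using $(\pi/L)^2 = f'(0)$ to cancel the linear term gives
\begin{equation*}
  -A^{\star\prime\prime}(x) = \frac{2}{L} \int_0^L \bigl[f(u^\star(x, y)) - f'(0)\,u^\star(x, y)\bigr]\phi_1(y)\,dy,
\end{equation*}
where $A^\star(x) \coloneqq \frac{2}{L}\int_0^L u^\star(x, y)\phi_1(y)\,dy$. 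The strong-KPP property \ref{hyp:strong} together with $u^\star > 0$ makes the integrand strictly negative pointwise, so $A^\star$ would be strictly convex and bounded on $\R$, which is impossible.

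Next I project $-\Delta u = f'(0)u - Bu^2 + O(u^3)$ against each $\phi_k$. The linear part cancels in the $k = 1$ equation, yielding
\begin{equation*}
  A_1''(x) = \frac{2B}{L} \int_0^L u(x,y)^2\phi_1(y)\,dy + O(\|u(x,\cdot)\|_\infty^3).
\end{equation*}
For $k \geq 2$, the exponentially decaying Green's function of $-\partial_x^2 + \mu_k$ on $\R_+$ combined with $\|u\|_\infty \to 0$ gives $A_k(x) = O(\|u(x,\cdot)\|_\infty^2)$, which bootstraps to $A_k(x) = O(A_1(x)^2)$ and hence $v \coloneqq u - A_1\phi_1 = O(A_1^2)$ uniformly in $y$. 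With this control, $\int_0^L u^2\phi_1 = A_1^2\int_0^L \phi_1^3 + O(A_1^3) = \tfrac{4L}{3\pi}A_1^2 + O(A_1^3)$, and we obtain the scalar equation
\begin{equation*}
  A_1''(x) = c\,A_1(x)^2 + O(A_1(x)^3), \qquad c \coloneqq \tfrac{8B}{3\pi}.
\end{equation*}

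To extract the leading asymptotic I turn to the first integral $E(x) \coloneqq \tfrac{1}{2}A_1'(x)^2 - \tfrac{c}{3}A_1(x)^3$. Since $A_1 \to 0$ and $A_1'' > 0$ eventually, $A_1$ is monotone decreasing with $A_1' \nearrow 0$, and comparison against the explicit unperturbed solutions $\frac{6}{c(x+x_0)^2}$ of $A'' = cA^2$ produces the two-sided bound $A_1(x) \asymp x^{-2}$. Plugged back, $|E'(x)| \lesssim |A_1'|\,A_1^3 \lesssim x^{-9}$, so $E(x) = O(x^{-8})$. Rearranging, $(A_1')^2 = \tfrac{2c}{3}A_1^3(1 + o(1))$; separating variables and integrating yields $A_1(x) = \tfrac{6}{cx^2}(1 + o(1)) = \tfrac{9\pi}{4Bx^2}(1 + o(1))$, and combining with $v = O(x^{-4})$ delivers \eqref{eq:polynomial-decay} uniformly in $y$. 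The main delicacy is this last bootstrap: only the strong-KPP structure (through the monotonicity and convexity it forces on $A_1$) ensures that the error $E'$ is integrable, which is what allows us to pin down the sharp coefficient $9\pi/(4B)$ rather than the range $[4/c,\,12/c]$ that the crude bounds alone provide.
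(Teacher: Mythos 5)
Your proposal follows the same broad architecture as the paper's proof: Fourier decomposition in the cross-section, a preliminary step showing $u\to 0$ at infinity, dominance of the principal mode, reduction to the scalar ODE $A_1''=cA_1^2(1+o(1))$, and ODE asymptotics. Two of your steps diverge in an interesting way, and one of them has a gap worth naming.

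Your argument that $u\to 0$ via projecting the limit equation onto $\phi_1$ and invoking the convexity contradiction (a strictly convex, bounded, nonnegative function on all of $\R$ cannot exist) is a genuine alternative to the paper's approach, which instead compares a subsequential limit $u^*$ with $\m{P}_\infty 1$ and uses the bounded-domain uniqueness Theorem~\ref{thm:bounded} on the critical cross-section. Your version is more elementary and self-contained; the paper's reuses machinery it already has. Both are correct, and both rely on the connected limit of the bulb being the \emph{full} cylinder $\R\times(0,L)$ so the projected ODE lives on all of $\R$.

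The gap is in the first-mode dominance step. You claim the exponentially decaying Green's function of $-\partial_x^2+\mu_k$ on $\R_+$, together with $\|u\|_\infty\to 0$, gives $A_k(x)=O(\|u(x,\cdot)\|_\infty^2)$. This does not follow directly: the Green's function representation of $A_k$ convolves the forcing $O(\|u(x',\cdot)\|_\infty^2)$ over all $x'$, so without an a priori control on the decay \emph{rate} of $\|u(\cdot,\cdot)\|_\infty$, one only obtains $A_k(x)\lesssim \mu_k^{-1}\sup_{x'\geq x/2}\|u(x',\cdot)\|_\infty^2 + O(e^{-cx})$, and the sup need not be comparable to the value at $x$. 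In particular, if $\|u\|_\infty$ decayed faster than $e^{-\sqrt{\mu_2}x}$ the claimed bound would be false. One must first rule out fast decay (for instance by noting that the mode-1 equation $A_1''=O(\|u\|_\infty^2)$ with $A_1\asymp\|u\|_\infty$ by Harnack and Hopf is incompatible with exponential decay), and only then does your Green's function estimate close. The word ``bootstraps'' suggests you sensed this, but the step that breaks the circularity is not in the proposal. The paper sidesteps the issue entirely: it normalizes $v_n=\tau_{(n,0)}u/\alpha_1(n)$, extracts a limit solving the linearized equation on the cylinder, and invokes Lemma~\ref{lem:symmetry} (positive eigenfunctions on a cylinder are $x$-independent) to conclude $v_n\to\varphi_1$ locally uniformly. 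That compactness-plus-Liouville route gets first-mode dominance without ever needing a quantitative Green's function bound.

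The ODE step is correct in both proofs and yields the same constant; your use of the first integral $E=\tfrac12(A_1')^2-\tfrac{c}{3}A_1^3$ is a cosmetic variation on the paper's envelope bounds $\ubar\beta,\bar\beta$. Note, however, that your claim that $|E'|\lesssim x^{-9}$ presupposes the two-sided estimate $A_1\asymp x^{-2}$, which you obtain by ODE comparison; the paper bypasses the comparison and extracts the asymptotics directly from the Rayleigh-type integrated inequalities. Finally, the last sentence slightly misattributes the source of the good sign: the convexity and monotonicity of $A_1$ come from the coefficient $B>0$ in the expansion \eqref{eq:nonlinear}, not from \ref{hyp:strong} itself, whose role here is to force $u\to 0$ (and uniqueness later).
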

This resolves one of the main questions in our proof of uniqueness---it shows that any two solutions are comparable.
Our proof will make use of the following characterization of the principal eigenvalue problem on cylinders.
\begin{lemma}
  \label{lem:symmetry}
  Fix $\const \in [0, 1]$ and suppose $\Omega = \R \times \omega$ for some bounded domain $\omega \subset \R^{d - 1}$.
  Then $\lambda(\Omega, \const) = \lambda(\omega, \const)$.
  Moreover, if $\varphi$ is a positive principal eigenfunction on $(\Omega, \const)$ in the sense that
  \begin{equation*}
    \begin{cases}
      -\Delta \varphi = \lambda(\Omega, \const) \varphi & \text{in } \Omega,\\
      \m{N}_\const \varphi = 0,
    \end{cases}
  \end{equation*}
  then $\varphi$ is a multiple of the principal eigenfunction on $(\omega, \const)$.
  In particular, $\varphi$ is independent of the first coordinate.
\end{lemma}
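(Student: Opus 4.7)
The plan is to prove the eigenvalue identity first, then classify the positive principal eigenfunctions via a slicewise Fourier expansion against the cross-sectional eigenbasis. Throughout, let $\psi_0 > 0$ denote the $L^2$-normalized principal eigenfunction on $(\omega, \const)$ with eigenvalue $\mu_0 \coloneqq \lambda(\omega, \const)$, and let $\Phi(x, y) \coloneqq \psi_0(y)$ be its translation-invariant extension to $\Omega$.

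For the identity $\lambda(\Omega, \const) = \mu_0$, I would sandwich from both sides. The lower bound is immediate: $\Phi$ is positive, satisfies $-\Delta \Phi = \mu_0 \Phi$, and obeys $\m{N}_\const \Phi = 0$, so $\Phi$ is an admissible test function in the definition \eqref{eq:lambda} and yields $\lambda(\Omega, \const) \geq \mu_0$. For the upper bound, I would truncate: let $\Omega_R \coloneqq (-R, R) \times \omega$ be equipped with the mixed boundary parameter $\var_R$ equal to $\const$ on the lateral wall and $0$ on the end caps. Then $(\Omega_R, \var_R) \preceq (\Omega, \const)$, so Proposition~\ref{prop:lambda}\ref{item:monotone} gives $\lambda(\Omega_R, \var_R) \geq \lambda(\Omega, \const)$. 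On this bounded cylinder, separation of variables produces the explicit principal eigenpair $\psi_0(y)\cos(\pi x/(2R))$ with eigenvalue $\mu_0 + \pi^2/(2R)^2$; sending $R \to \infty$ yields $\lambda(\Omega, \const) \leq \mu_0$.

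For the classification, let $\{(\mu_k, \psi_k)\}_{k \geq 0}$ denote the $L^2$-orthonormal eigenpairs on $(\omega, \const)$ with $\mu_0 < \mu_1 \leq \mu_2 \leq \cdots$, and let $\varphi$ be a positive principal eigenfunction on $\Omega$. I would expand $\varphi$ slicewise through the coefficients
\[
a_k(x) \coloneqq \int_\omega \varphi(x, y) \psi_k(y) \, dy.
\]
Differentiating twice under the integral, invoking $-\Delta \varphi = \mu_0 \varphi$, and integrating by parts twice in $y$ (the boundary terms on $\partial\omega$ cancel by self-adjointness, because $\varphi(x, \cdot)$ and $\psi_k$ obey the same $\const$-boundary condition) yields the ordinary differential equation
\[
a_k''(x) = (\mu_k - \mu_0) a_k(x).
\]
For $k = 0$, the equation $a_0'' = 0$ combined with $a_0 \geq 0$ (from $\varphi \geq 0$ and $\psi_0 \geq 0$) forces $a_0 \equiv c$ for some $c \geq 0$; if $c = 0$, then $\varphi \psi_0 \equiv 0$ on $\omega$, and strict positivity of $\psi_0$ in the interior would give $\varphi \equiv 0$, contradicting positivity. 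Hence $c > 0$. For $k \geq 1$, setting $\gamma_k \coloneqq \sqrt{\mu_k - \mu_0} > 0$, the general solution is $a_k(x) = A_k e^{\gamma_k x} + B_k e^{-\gamma_k x}$.

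The main obstacle is to rule out $A_k, B_k \neq 0$ for $k \geq 1$, which would then force $\varphi(x, y) = c\psi_0(y)$. Because Lemma~\ref{lem:symmetry} is applied in the analysis of bounded positive solutions of \eqref{eq:main}, I would rely on an a priori boundedness assumption on $\varphi$: Cauchy--Schwarz then gives $|a_k(x)| \leq \|\varphi\|_{L^\infty(\Omega)} \|\psi_k\|_{L^1(\omega)}$ uniformly in $x$, which is incompatible with the exponential ansatz unless $A_k = B_k = 0$. If boundedness is not part of the formal hypothesis, the same conclusion follows from a positivity argument: each $\psi_k$ with $k \geq 1$ must change sign on $\omega$, being $L^2$-orthogonal to the strictly positive $\psi_0$, so any nonzero growing exponential mode would eventually drive $\varphi$ to negative values along a suitable sequence $(x_n, y_n) \in \Omega$, contradicting $\varphi \geq 0$. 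Either route produces $\varphi = c \psi_0$, which is simultaneously a multiple of the cross-sectional principal eigenfunction and independent of the first coordinate.
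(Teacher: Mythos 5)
Your eigenvalue identity $\lambda(\Omega,\varrho) = \lambda(\omega,\varrho)$ is correct and essentially coincides with the paper's: the paper plugs $\cos(\eps x)\tbf{1}_{|x|<\pi/2\eps}\psi(y)$ into the Rayleigh quotient, while you read off the same function as the explicit principal eigenfunction of the truncated cylinder and use monotonicity---the two are equivalent. Your slicewise Fourier expansion is also a nice generalization of the paper's single projection $E(x)=\int_\omega\varphi(x,\cdot)\psi$, and the ODE derivation $a_k''=(\mu_k-\mu_0)a_k$ (with the boundary terms cancelling by self-adjointness of $\m{N}_\varrho$) is correct.

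The gap is in the final step, where you try to rule out the growing exponential modes $a_k$, $k\geq 1$. Neither of your two routes closes it. The first route invokes boundedness of $\varphi$, but boundedness is not part of the hypothesis, and it is \emph{not} automatically available where the lemma is applied: in the proof of Lemma~\ref{lem:critical-bulb}, the function $v^* = \lim v_n$ is only obtained as a locally uniform limit of a normalized (and a priori un-normalized in $L^\infty$) sequence, so you would owe a separate proof that $v^*$ is bounded before you could invoke the lemma. The second route---``any nonzero growing mode would eventually drive $\varphi$ negative''---is heuristic and does not survive scrutiny when several modes are simultaneously nonzero: since the rates $\gamma_k$ are strictly increasing, the slowest nonzero mode is \emph{not} dominant at large $|x|$, faster-growing modes can cancel its sign change on any particular slice, and there may be infinitely many nonzero modes. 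To make this rigorous you would effectively need a global bound of the form $C^{-1}\psi_0\leq\varphi\leq C\psi_0$, and obtaining it forces you through the same Harnack + Hopf machinery the paper uses. That is in fact the paper's route: it uses the constancy of $E$ to fix the scale of each slice, combines interior Harnack with the Hopf lemma to get uniform comparability $C^{-1}\psi\leq\varphi\leq C\psi$, and then runs the sliding argument $\ubar\mu=\inf\{\mu:\varphi\leq\mu\psi\}$, using the fact that $w=\ubar\mu\psi-\varphi$ is itself a positive principal eigenfunction to contradict minimality of $\ubar\mu$. Your Fourier framework is compatible with this---the $a_0$ constancy is precisely the constancy of $E$---but as written the proposal stops short of the estimate that actually finishes the proof.
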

We defer the proof to the end of the section.
\begin{proof}[Proof of Lemma~\textup{\ref{lem:critical-bulb}}]
  Let $u$ be a positive bounded solution of \eqref{eq:main} and consider its translates $\tau_{(n,0)}u$.
  As $n \to \infty$, we can extract a subsequence that converges to a bounded nonnegative limit $u^*$ solving \eqref{eq:main} on the cylinder $\R \times (0, L)$.
  We know $u^*$ lies under the supersolution $1$.
  Recall the parabolic evolution from \eqref{eq:parabolic}.
  By the parabolic comparison principle, $0 \leq u^* \leq \m{P}_\infty 1$.
  But $\m{P}_\infty 1$ is independent of $x$, and thus corresponds to a nonnegative bounded solution of \eqref{eq:main} on the cross-section $(0, L)$.
  This problem is critical, so by Theorem~\ref{thm:bounded} for bounded domains, $\m{P}_\infty 1 = 0$.
  Hence $u^* = 0$.
  The limit is unique, so in fact $u$ converges to $0$ uniformly as $x \to \infty$.
  We determine the asymptotics of this limit.

  Let $\{\varphi_k\}_{k \in \N}$ denote an orthonormal basis of eigenfunctions of $-\Delta$ on $(0, L)$, so that
  \begin{equation}
    \label{eq:Fourier-basis}
    \varphi_k(y) = \sqrt{\frac{2}{L}} \sin \left(\frac{\pi k y}{L}\right)
  \end{equation}
  for $k \in \N$.
  These have eigenvalues $(\pi k/L)^2.$
  Since $L = \pi f'(0)^{-1/2}$, the principal eigenvalue is $f'(0)$ and $\varphi_1$ solves the linearization of \eqref{eq:main} about $0$ on $\R \times (0, L)$.
  However, because $f'(0)u - f(u) > 0$ for $u > 0$, there is nonlinear absorption.
  We have assumed that it takes the standard form \eqref{eq:nonlinear}.

  Because $u \in \m{C}_{\mathrm{b}}^{2, \gamma}$, its Fourier series
  \begin{equation}
    \label{eq:Fourier-series}
    u(x, y) = \sum_{k \geq 1} \al_k(x) \varphi_k(y)
  \end{equation}
  converges absolutely and $\al_k \in \m{C}_{\mathrm{b}}^{2,\gamma}(\R_+)$.
  Let $\braket{\anon, \anon}$ denote the inner product on $L^2\big((0, L)\big)$.
  Since $u(x, \anon) \to 0$ uniformly as $x \to \infty$, the inner product representation
  \begin{equation}
    \label{eq:inner-product}
    \al_k(x) = \braket{u(x, \anon), \varphi_k}
  \end{equation}
  implies that $\al_k(x) \to 0$ as $x \to \infty$.

  We claim that the first term of \eqref{eq:Fourier-series} dominates as $x \to \infty$.
  To see this, consider the sequence $v_n \coloneqq \tau_{(n, 0)}u/\al_1(n)$.
  By \eqref{eq:inner-product}, we have
  \begin{equation}
    \label{eq:transverse}
    \braket{v_n(0, \anon), \varphi_1} = 1.
  \end{equation}
  Since $\al_1(n) \to 0$ as $n \to \infty$, $v_n$ converges along a subsequence to a solution $v^*$ of the linearization of \eqref{eq:main} about $0$ on $\R \times (0, L)$.
  By Lemma~\ref{lem:symmetry}, $v^*$ is a multiple of the transverse principal eigenfunction $\varphi_1$.
  Moreover, \eqref{eq:transverse} yields $\braket{v^*, \varphi_1} = 1,$ so $v^* = \varphi_1$.
  Since the limit is unique, $v_n$ converges locally uniformly along the whole sequence to $\varphi_1$.
  That is,
  \begin{equation}
    \label{eq:principal}
    \lim_{x \to \infty} \frac{u(x, y)}{\al_1(x)} = \varphi_1(y).
  \end{equation}
  Elliptic estimates imply that this limit is uniform in $y$ and valid in $\m{C}^2$.
  Taking the inner product between \eqref{eq:principal} and $\varphi_k$, we see that $\al_k \ll \al_1$ as $x \to \infty$ for all $k \geq 2$.

  Write $u = \al_1\varphi_1 + u^\perp$.
  Substituting this in \eqref{eq:main} and using \eqref{eq:nonlinear}, we obtain
  \begin{equation*}
    -\al_1''\varphi_1 - \Delta u^\perp = f'(0)u^\perp - B\varphi_1^2 \al_1^2 - 2 B \varphi_1 \al_1 u^\perp - B \big(u^\perp\big)^2 + \m{O}(u^3).
  \end{equation*}
  We take the inner product with $\varphi_1$ and observe that $\braket{\Delta u^\perp, \varphi_1} = 0 = \Braket{u^\perp, \varphi_1}$.
  Also, \eqref{eq:principal} yields $u^\perp \ll \al_1 \varphi_1$ and $u^3 \ll \varphi_1^2 \al_1^2$.
  Therefore
  \begin{equation}
    \label{eq:approx-ODE}
    \al_1'' = B \braket{\varphi_1^2, \varphi_1} \al_1^2 \left[1 + \beta(x)\right]
  \end{equation}
  for some $\beta(x) \to 0$ as $x \to \infty$.
  Define
  \begin{equation*}
     \zeta \coloneqq B \braket{\varphi_1^2, \varphi_1}, \quad \ubar{\beta}(x) \coloneqq \inf_{[x, \infty)} \beta, \And \bar{\beta}(x) \coloneqq \sup_{[x, \infty)} \beta.
  \end{equation*}
  Then $\ubar\beta$ and $\bar\beta$ are monotone increasing and decreasing towards $0$, respectively.
  Fix $m$ such that $\ubar{\beta}(m) \geq -1$.
  Then on $[m, \infty)$, \eqref{eq:approx-ODE} implies that $\al_1$ is convex and decreasing towards $0$.
  We assume for the time being that $x \geq m$.
  We note that elliptic estimates imply that $\al_1' \to 0$ as $x \to \infty$ as well.
  Multiplying \eqref{eq:approx-ODE} by $\al_1'$ and integrating on $(x, \infty)$, we find
  \begin{equation*}
    \frac{2\zeta}{3}\al_1^3(1 + \ubar \beta) \leq (\al_1')^2 \leq \frac{2\zeta}{3}\al_1^3(1 + \bar \beta).
  \end{equation*}
  Rearranging, we have
  \begin{equation*}
    \sqrt{\frac{\zeta}{6}(1 + \ubar\beta)} \leq \left(\frac{1}{\sqrt{\al_1}}\right)' \leq \sqrt{\frac{\zeta}{6}(1 + \bar\beta)}.
  \end{equation*}
  Integrating on $(x_0, x)$ and taking $x \to \infty$, we can check that
  \begin{equation*}
    \left[\frac{\zeta}{6}\big(1 + \bar\beta(x_0)\big)\right]^{-1} \leq \liminf_{x \to \infty} x^2\al_1(x) \leq \limsup_{x \to \infty} x^2\al_1(x) \leq \left[\frac{\zeta}{6}\big(1 + \ubar\beta(x_0)\big)\right]^{-1}.
  \end{equation*}
  Since $\ubar\beta,\bar\beta \to 0$ as $x \to \infty$, we can take $x_0 \to \infty$ to see that
  \begin{equation*}
    \lim_{x \to \infty} x^2 \al_1(x) = \frac{6}{\zeta}.
  \end{equation*}
  That is,
  \begin{equation}
    \label{eq:leading}
    \al_1(x) \sim \frac{6}{B \braket{\varphi_1^2, \varphi_1} x^2} \quad \text{as } x \to \infty.
  \end{equation}
  We can use \eqref{eq:Fourier-basis} to compute
  \begin{equation*}
    \frac{\varphi_1(y)}{\braket{\varphi_1^2, \varphi_1}} = \frac{3\pi}{8} \sin\left(\frac{\pi y}{L}\right).
  \end{equation*}
  Then \eqref{eq:polynomial-decay} follows from \eqref{eq:principal} and \eqref{eq:leading}.
  Elliptic estimates up to the boundary imply that \eqref{eq:polynomial-decay} holds uniformly in $y$.
\end{proof}
With explicit knowledge of the behavior of solutions at infinity, we can easily show uniqueness.
\begin{proposition}
  Let $f$ be a strong-KPP reaction satisfying \eqref{eq:nonlinear}.
  Then \eqref{eq:main} has a unique positive bounded solution on $(\Omega, 0)$.
\end{proposition}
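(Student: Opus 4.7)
The plan is to follow the Rabinowitz-style comparison argument used in Theorems~\ref{thm:bounded} and~\ref{thm:uniqueness}, but with Lemma~\ref{lem:critical-bulb} supplying the missing piece: precise control on decay at infinity inside the half-cylinder $\Gamma$. Let $u_1, u_2$ be two positive bounded solutions of \eqref{eq:main} on $(\Omega, 0)$. By Lemma~\ref{lem:critical-bulb}, both admit the same leading-order asymptotic as $x \to \infty$, so
\begin{equation*}
  \lim_{x \to \infty} \sup_{y \in (0,L)} \left|\frac{u_1(x,y)}{u_2(x,y)} - 1\right| = 0.
\end{equation*}
In particular, fix $R$ so large that $u_1 \leq 2 u_2$ on $\Omega \cap \{x > R\}$. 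On the bounded region $\Omega \cap \{x \leq R\}$, both $u_i$ are positive $\m{C}^{2,\gamma}$ functions vanishing on the original boundary $\partial \Omega$; by the Hopf lemma and the uniform interior ball condition, $u_1/u_2$ extends continuously and positively to the boundary, so compactness yields a constant $C$ with $u_1 \leq C u_2$ there. Therefore $\ubar{\mu} \coloneqq \inf\{\mu > 0 \mid u_1 \leq \mu u_2 \text{ in } \Omega\}$ is finite.

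Next, I would argue that $\ubar{\mu} \leq 1$. Suppose to the contrary that $\ubar{\mu} > 1$, and define $w \coloneqq \ubar{\mu} u_2 - u_1 \geq 0$. The strong KPP property \ref{hyp:strong} and $u_2 > 0$ give
\begin{equation*}
  -\Delta w = \ubar{\mu} f(u_2) - f(u_1) > f(\ubar{\mu} u_2) - f(u_1) = q w
\end{equation*}
for a bounded potential $q$. From the asymptotics in Lemma~\ref{lem:critical-bulb},
\begin{equation*}
  \frac{w(x,y)}{u_2(x,y)} \to \ubar{\mu} - 1 > 0 \quad \text{uniformly in } y \text{ as } x \to \infty,
\end{equation*}
so $w \not\equiv 0$ and hence by the strong maximum principle $w > 0$ throughout $\Omega$.

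The crux is then to upgrade $w > 0$ to a quantitative lower bound $w \geq \eta u_2$ for some $\eta > 0$. This I would do by patching two regimes. First, by the asymptotic display above, there exists $R' > 0$ such that
\begin{equation*}
  w \geq \tfrac{1}{2}(\ubar{\mu} - 1)\, u_2 \quad \text{on } \Omega \cap \{x > R'\}.
\end{equation*}
Second, on the compact set $\Omega \cap \{x \leq R'+1\}$, the Hopf lemma applied to both $w$ and $u_2$ (together with smoothness of their ratio up to the original Dirichlet boundary and positivity on the artificial cross-section $\{x = R'+1\}$) yields $w \geq \delta u_2$ for some $\delta > 0$. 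Setting $\eta \coloneqq \min\{\delta, (\ubar{\mu}-1)/2\}$, we obtain $w \geq \eta u_2$ throughout $\Omega$, i.e.\ $u_1 \leq (\ubar{\mu} - \eta) u_2$, contradicting the definition of $\ubar{\mu}$. Hence $\ubar{\mu} \leq 1$, so $u_1 \leq u_2$, and by symmetry $u_1 = u_2$.

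The main obstacle I anticipate is the patching step---specifically, verifying that the Hopf-based comparison on the bounded region is compatible with the asymptotic comparison in the tube in a way that produces a single uniform constant $\eta > 0$. The key point is that Lemma~\ref{lem:critical-bulb} provides matching leading-order profiles for $u_1$ and $u_2$ on the cross-section $\{x = R'+1\}$, so $w/u_2$ is already bounded away from zero on this artificial interface and extends continuously as a positive function to the closure of the bounded piece. Once this compatibility is in hand, the rest is essentially the proof of Theorem~\ref{thm:bounded} transplanted to the unbounded setting, with the sharp asymptotics playing the role that the Hopf lemma alone played in the compact case.
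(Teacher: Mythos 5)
Your proposal is correct and follows essentially the same route as the paper: use the precise asymptotics of Lemma~\ref{lem:critical-bulb} to get comparability (and matching leading order) in the tail, define $\ubar{\mu}$, run the Rabinowitz sliding argument, and patch the Hopf-lemma estimate on a compact piece with the asymptotic estimate in the tube to contradict minimality of $\ubar{\mu}$. The only cosmetic difference is that you make the two-regime patching explicit with $R'$ and $R'+1$, while the paper phrases it as choosing a compact $K$ outside of which $w \geq \delta u_2$ and then shrinking $\delta$ to cover $K$.
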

\begin{proof}
  Suppose $u_1$ and $u_2$ are both positive bounded solutions of \eqref{eq:main}.
  Following the proof of Theorem~\ref{thm:uniqueness}, define
  \begin{equation*}
    \ubar{\mu} \coloneqq \inf\{\mu \mid u_1 \leq \mu u_2\}.
  \end{equation*}
  By the strong maximum principle and the Hopf lemma, $u_1$ and $u_2$ are comparable on any compact $K \Subset \bar\Omega$.
  That is, there exists $C(K) \geq 1$ such that
  \begin{equation*}
    C(K)^{-1} u_2 \leq u_1 \leq C(K) u_2 \quad \text{in } K.
  \end{equation*}
  On the other hand, Lemma~\ref{lem:critical-bulb} implies that $u_1 \sim u_2$ at infinity.
  It follows that $\ubar{\mu} < \infty$.

  Suppose for the sake of contradiction that $\ubar{\mu} > 1$.
  Define
  \begin{equation*}
    w \coloneqq \ubar{\mu} u_2 - u_1 \geq 0.
  \end{equation*}
  As in the proof of Theorem~\ref{thm:uniqueness}, the strong KPP property \ref{hyp:strong} ensures that $-\Delta w > qw$ for a suitable difference quotient $q \in L^\infty(\Omega)$.
  By the strong maximum principle, $w > 0$.

  When $x \to \infty$, Lemma~\ref{lem:critical-bulb} implies that $w \sim (\ubar{\mu} - 1)u_2$ uniformly in $y$.
  Thus if $\delta \in (0, \ubar{\mu} - 1)$, there exists compact $K \Subset \Omega$ such that $w \geq \delta u_2$ on $\Omega \setminus K$.
  Moreover, the strong maximum principle and the Hopf lemma imply that $w$ and $u_2$ are comparable on $K$.
  In particular, perhaps after reducing $\delta > 0$, we have $w \geq \delta u_2$ on $K$.
  Hence $w \geq \delta u_2$ on the entire domain $\Omega$.
  This contradicts the definition of $\ubar{\mu}$, so in fact $\ubar{\mu} \leq 1$.
  Uniqueness follows by symmetry.
\end{proof}
We close the section with a proof of our symmetry result.
\begin{proof}[Proof of Lemma~\textup{\ref{lem:symmetry}}]
  We denote coordinates on $\Omega = \R \times \omega$ by $(x, y)$.
  Let $\psi$ be a positive principal eigenfunction on $(\omega, \const)$.
  Then we can view $\psi$ as a function on $\Omega$ as well; using it in \eqref{eq:lambda}, we see that $\lambda(\Omega, \const) \geq \lambda(\omega, \const)$.
  On the other hand, we can use the test functions $\cos(\eps x) \tbf{1}_{\abs{x} < \pi/2\eps} \psi(y)$ in \eqref{eq:variation} to verify the opposite inequality when $\eps \searrow 0$.
  Thus $\lambda(\Omega, \const) = \lambda(\omega, \const).$

  Now let $\varphi$ be a positive principal eigenfunction on $(\Omega, \const)$ and define
  \begin{equation*}
    E(x) \coloneqq \int_\omega \varphi(x, y) \psi(y) \d y.
  \end{equation*}
  Integrating by parts several times and using $\m{N}_\const \varphi = \m{N}_\const \psi = 0$, we can check that $E'' = 0$.
  Since $\varphi > 0,$ $E$ is a positive affine function, and hence constant.
  By Harnack and the Hopf lemma, $\varphi$ is comparable to $\psi$ in the sense that
  \begin{equation*}
    C^{-1} \psi \leq \varphi \leq C \psi
  \end{equation*}
  for some $C \geq 1$.
  Let
  \begin{equation*}
    \ubar{\mu} \coloneqq \inf\{\mu > 0 \mid \varphi \leq \ubar{\mu} \psi \}.
  \end{equation*}
  Then $\ubar{\mu} \in \R_+$.
  Consider $w \coloneqq \ubar{\mu} \psi - \varphi \geq 0$.
  If $w = 0$, we are done.
  Otherwise, the strong maximum principle implies that $w > 0$.
  Thus $w$ is a positive principal eigenfunction on $(\Omega, \const)$.
  Replacing $\varphi$ by $w$ in our work above, we see that $w$ is itself comparable to $\psi$.
  In particular, there exists $\eps > 0$ such that $w \geq \eps \psi$.
  Therefore
  \begin{equation*}
    \varphi \leq (\ubar{\mu} - \eps) \psi,
  \end{equation*}
  contradicting the definition of $\ubar{\mu}$.
  So in fact $\varphi = \ubar{\mu} \psi$.
\end{proof}

\appendix

\section{Smoothness and convergence of domains}
\label{sec:smooth}

In this appendix, we precisely define our notion of smoothness for domains and prove an essential compactness result.
These ideas are widely understood folklore, but we use them so extensively that we elect to put the theory on solid ground.

We begin with some preliminary notation.
Given $r > 0$, we let $B_r$ and $B_r^{d - 1}$ denote the $r$-balls in $\R^d$ and $\R^{d - 1}$, respectively.
We define the cylinder
\begin{equation*}
  \Gamma_r \coloneqq B_r^{d - 1} \times (-r, r),
\end{equation*}
on which we use coordinates $(x', y) \in \R^{d - 1} \times \R$.
Given a function $\phi \in \m{C}^{2,\gamma}(B_r^{d-1})$, we define its hypograph
\begin{equation*}
  \Pi_r(\phi) \coloneqq \big\{(x', y) \in \Gamma_r \mid y < \phi(x')\big\}.
\end{equation*}
Such hypographs serve as building blocks for our smooth sets.
Finally, we let $\op{Isom}_0(\R^d)$ denote the group of orientation-preserving isometries of $\R^d$.
It is generated by translations and rotations.
\begin{definition}
  \label{def:smooth}
  Given $r \in (0, 1]$ and $K > 0,$ we say that an open set $\Omega \subset \R^d$ is $(\gamma,r,K)$-\emph{smooth} if one of following holds for every $x \in \R^d$:
  \begin{enumerate}[label = \textup{(S\arabic*)}, itemsep = 1ex]
  \item
    \label{item:smooth-subset}
    $B_{r/8}(x) \subset \Omega$;

  \item
    \label{item:smooth-disjoint}
    $B_{r/8}(x) \subset \Omega^c$;

  \item
    \label{item:smooth-boundary}
    There exist $g \in \op{Isom}_0(\R^d)$ and $\phi \in \m{C}^{2,\gamma}\big(B_r^{d-1}; [-r/4,r/4]\big)$ such that ${g(x) = 0}$, $\norm{\phi}_{\m{C}^{2,\gamma}(B_r^{d - 1})} \leq K$, and $g \Omega \cap \Gamma_r = \Pi_r(\phi)$.
  \end{enumerate}
  It follows that $\partial \Omega$ is a uniformly $\m{C}^{2,\gamma}$ submanifold of $\R^d$.
  We say $\var \colon \partial \Omega \to [0, 1]$ is uniformly $\m{C}^{2,\gamma}$ smooth when this holds in the standard sense used in differential geometry.
  Also, we say a collection $\m{U}$ of open sets is $(\gamma,r,K)$-smooth if every element of $\m{U}$ is $(\gamma,r,K)$-smooth.
\end{definition}
\noindent
We often suppress parameters and say that $\Omega$ or $\m{U}$ is ``uniformly $\m{C}^{2,\gamma}$'' or ``uniformly smooth.''
\begin{remark}
  It may seem strange that we restrict the range of $\phi$ to $[-r/4, r/4]$ in \ref{item:smooth-boundary}.
  For instance, this appears to rule out the hypograph $\Pi(\psi)$ of a uniformly $\m{C}^{2,\gamma}$ function $\psi \colon \R^{d - 1} \to \R$ with $\op{Lip} \psi \gg 1$.
  While this is true for a fixed value of $r$, we claim that such a hypograph satisfies Definition~\ref{def:smooth} for sufficiently small $r$.

  This is due to the fact that the curvature of $\partial \Pi(\psi)$ is uniformly bounded.
  Thus when $r \ll 1$, $\partial \Pi(\psi)$ appears almost flat at scale $r$.
  By suitably orienting the isometry $g$, we can arrange that every local chart $\phi$ for $\Pi(\psi)$ in \ref{item:smooth-boundary} satisfies $\abs{\phi(0)} \leq r/8$ and $\abs{\nab \phi} \ll 1$.
  Thus if $r$ is small, we can impose the additional condition $\abs{\phi} \leq r/4$.
\end{remark}
Next, we define a notion of smooth convergence for smooth sets.
We use the notation $\N_* \coloneqq \N \cup \{\infty\}$ for the extended natural numbers.
When we write $n \geq N$ in the following definition, we include the limit index $n = \infty$.
\begin{definition}
  \label{def:limit}
  Given a $(\gamma, r, K)$-smooth sequence $(\Omega_n)_{n \in \N_*}$ and $\al \in (0, \gamma)$, we say ${\Omega_n \to \Omega_\infty}$ locally uniformly in $\m{C}^{2,\al}$ as $n \to \infty$ if one of the following holds for every $x \in \R^d$:
  \begin{enumerate}[label = \textup{(L\arabic*)}, itemsep = 1ex]
  \item
    \label{item:limit-subset}
    $B_{r/16}(x) \subset \Omega_n \cap \Omega_\infty$ when $n$ is sufficiently large;

  \item
    \label{item:limit-disjoint}
    $B_{r/16}(x) \subset \Omega_n^c \cap \Omega_\infty^c$ when $n$ is sufficiently large;

  \item
    \label{item:limit-boundary}
    There exist $N \in \N$, $g \in \op{Isom}_0(\R^d)$, and $(\phi_n)_{n \geq N} \subset \m{C}^{2,\gamma}\big(B_{r/2}^{d-1}; [-r/2,r/2]\big)$ such that $g(x) = 0$,
    ${g \Omega_n \cap \Gamma_{r/2} = \Pi_{r/2}(\phi_n)}$ for all $n \geq N,$ and $\phi_n \to \phi_\infty$ in $\m{C}^{2,\al}(B_{r/2}^{d - 1})$ as $n \to \infty$.
  \end{enumerate}
  Then one can naturally define locally uniform $\m{C}^{2,\al}$ convergence for boundary parameters $\var_n$ and functions $u_n$ defined on $\partial\Omega_n$ and $\Omega_n$, respectively.
  The details are standard given the charts in \ref{item:limit-subset}--\ref{item:limit-boundary}; we omit them.
\end{definition}
We can now state our compactness result.
\begin{proposition}
  \label{prop:compactness}
  Let $(\Omega_n)_{n \in \N}$ be a $(\gamma, r, K)$-smooth sequence of domains.
  Then for each $\al \in (0, \gamma)$, there exist $(\gamma, r, K)$-smooth $\Omega_\infty$ and a subsequence $(\Omega_{n_m})_{m \in \N}$ of $(\Omega_n)_{n \in \N}$ such that ${\Omega_{n_m} \to \Omega_\infty}$ locally uniformly in $\m{C}^{2,\al}$ as $m \to \infty$.
\end{proposition}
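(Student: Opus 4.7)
The plan is a diagonal extraction argument that leverages the compactness of the chart space. I enumerate a countable dense set $\{x_k\}_{k \in \mathbb{N}} \subset \R^d$, for instance $\mathbb{Q}^d$. At each $x_k$, every $\Omega_n$ satisfies at least one of \ref{item:smooth-subset}, \ref{item:smooth-disjoint}, \ref{item:smooth-boundary}, so a subsequence can be chosen along which a single case holds uniformly in $n$. In case \ref{item:smooth-boundary}, after fixing $g_{n,k}(x_k) = 0$ the isometry $g_{n,k}$ is determined by its rotational part, which lies in the compact group $\mathrm{SO}(d)$, and the chart $\phi_{n,k}$ lies in the bounded set $\{\|\phi\|_{\m{C}^{2,\gamma}(B_r^{d-1})} \leq K\}$. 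By Arzel\`a--Ascoli and the compact embedding $\m{C}^{2,\gamma} \hookrightarrow \m{C}^{2,\alpha}$ (for $\alpha < \gamma$), one extracts further subsequences along which $g_{n,k} \to g_{\infty,k}$ and $\phi_{n,k} \to \phi_{\infty,k}$ in $\m{C}^{2,\alpha}$. A diagonal argument over $k$ produces a single subsequence $(n_m)_m$ realizing all of this convergence simultaneously at every dense point.

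Next, I define $\Omega_\infty$ by gluing the limiting data. At each $x_k$, in case \ref{item:smooth-subset} I stipulate $B_{r/8}(x_k) \subset \Omega_\infty$; in \ref{item:smooth-disjoint}, $B_{r/8}(x_k) \cap \Omega_\infty = \emptyset$; in \ref{item:smooth-boundary}, $g_{\infty,k} \Omega_\infty \cap \Gamma_r = \Pi_r(\phi_{\infty,k})$. Consistency on overlaps is automatic: each $\Omega_{n_m}$ is a single globally defined set, and the relevant inclusions pass to the limit because the chart graphs converge in Hausdorff distance. To verify the locally uniform $\m{C}^{2,\alpha}$ convergence at an arbitrary $x \in \R^d$, I pick a dense $x_k$ within distance $r/16$ of $x$: cases \ref{item:smooth-subset} and \ref{item:smooth-disjoint} at $x_k$ yield \ref{item:limit-subset} and \ref{item:limit-disjoint} at $x$ directly via $B_{r/16}(x) \subset B_{r/8}(x_k)$, while case \ref{item:smooth-boundary} requires transferring the chart from $x_k$ to $x$. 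The slack $\abs{\phi} \leq r/2$ on the smaller domain $B_{r/2}^{d-1}$ in \ref{item:limit-boundary}, versus the stricter $\abs{\phi} \leq r/4$ on $B_r^{d-1}$ in \ref{item:smooth-boundary}, is precisely what absorbs the translation of base point.

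The main obstacle is verifying that $\Omega_\infty$ itself is $(\gamma, r, K)$-smooth with the \emph{same} parameters, rather than slightly weaker ones. Since smoothness is a pointwise requirement on every $x \in \R^d$, the dense set $D$ does not suffice. For each $x$ I perform a further compactness extraction within $(n_m)$, applied at $x$ directly, which produces limiting isometries and charts describing $\Omega_\infty$ in a neighborhood of $x$; compatibility with the earlier construction follows because both are recovered as limits of the same finite-$n_m$ data. Upper semicontinuity of the $\m{C}^{2,\gamma}$ H\"older norm under $\m{C}^{2,\alpha}$ convergence preserves the bound $\|\phi_\infty\|_{\m{C}^{2,\gamma}} \leq K$, so the limit genuinely retains all three parameters $(\gamma, r, K)$ of the original sequence.
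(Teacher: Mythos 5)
Your argument is essentially the paper's own proof: a diagonal extraction over a countable family of base points, using compactness of $\mathrm{SO}(d)$ for the isometries and Arzel\`a--Ascoli with the compact embedding $\m{C}^{2,\gamma} \hookrightarrow \m{C}^{2,\al}$ for the charts, then gluing the limiting chart data into $\Omega_\infty$ and transferring base points by exploiting the slack between the $[-r/4, r/4]$ bound on $B_r^{d-1}$ in (S3) and the $[-r/2, r/2]$ bound on $B_{r/2}^{d-1}$ in (L3). The only cosmetic difference is that the paper indexes the diagonalization by a fixed countable cover of $\R^d$ by $r/8$-balls chosen so that every $r/16$-ball lies inside one of them, whereas you index by a dense countable set like $\Q^d$ and then locate a nearby base point within distance $r/16$ of an arbitrary $x$; these are functionally equivalent. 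Your final paragraph---re-extracting at an arbitrary $x$ to verify (S3) there, and invoking lower semicontinuity of the $\m{C}^{2,\gamma}$ seminorm under $\m{C}^{2,\al}$ convergence to preserve the constant $K$---is a bit more explicit than the paper, which simply asserts that it is straightforward to check that $\Omega_\infty$ is $(\gamma,r,K)$-smooth; this is a welcome clarification rather than a deviation.
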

\noindent
This is essentially Arzel\`a--Ascoli for uniformly smooth domains.
\begin{remark}
  Definition~\ref{def:limit} seems related to the notion of Cheeger--Gromov convergence from differential geometry.
  For instance, if our domains are uniformly $\m{C}^3$, we can apply Theorem~3.1 of \cite{AKKLT} to infer locally-uniform subsequential convergence as in Proposition~\ref{prop:compactness}.
\end{remark}
\begin{corollary}
  \label{cor:compactness}
  Every uniformly smooth sequence $(\Omega_n, \var_n, u_n)_{n \in \N}$ of domains $\Omega_n$, boundary parameters $\var_n \colon \partial\Omega_n \to [0, 1]$, and functions $u_n \colon \Omega_n \to \R$ has a subsequence that converges locally uniformly to a uniformly smooth limit $(\Omega_\infty, \var_\infty, u_\infty)$.
\end{corollary}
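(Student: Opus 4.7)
The plan is to combine Proposition~\ref{prop:compactness} with chart-wise Arzel\`a--Ascoli applied to the boundary parameters and interior functions. Proposition~\ref{prop:compactness} already handles the hardest part---convergence of the bare domains---so the remaining work is to layer the convergence of $\var_n$ and $u_n$ on top.

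First I would apply Proposition~\ref{prop:compactness} to extract a subsequence (still indexed by $n$) along which $\Omega_n \to \Omega_\infty$ locally uniformly in $\m{C}^{2,\al}$. This provides, for each $x \in \R^d$, one of the three alternatives of Definition~\ref{def:limit}. In particular, near any $x \in \partial \Omega_\infty$, we have $g \in \op{Isom}_0(\R^d)$ and graph functions $\phi_{n,x} \to \phi_{\infty,x}$ in $\m{C}^{2,\al}(B_{r/2}^{d-1})$; near any interior (resp.\ exterior) point, we have a ball $B_{r/16}(x)$ that eventually sits inside $\Omega_n$ (resp.\ $\Omega_n^c$).

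Next I would extend the convergence to $(\var_n)_{n \in \N}$. On a boundary chart near $x$, we pull $\var_n$ back through $\phi_{n,x}$ to obtain a function on $B_{r/2}^{d-1}$ whose $\m{C}^{2,\gamma}$ norm is controlled by the uniform smoothness of $\var_n$ together with the uniform $\m{C}^{2,\gamma}$ bounds on $\phi_{n,x}$. Since $\al < \gamma$, Arzel\`a--Ascoli gives a convergent $\m{C}^{2,\al}$-subsequence on each chart, and a Cantor diagonal over a countable dense set of chart centers yields a single subsequence along which all pullbacks of $\var_n$ converge; the local limits assemble into $\var_\infty \colon \partial \Omega_\infty \to [0, 1]$. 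The argument for $u_n$ is essentially identical: the uniform $\m{C}^{2,\gamma}$-bounds on $u_n$ persist after pulling back through either an interior chart (case~\ref{item:limit-subset}) or a boundary chart (case~\ref{item:limit-boundary}), so a further Arzel\`a--Ascoli and diagonalization produce a limit $u_\infty$ on $\Omega_\infty$.

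The main obstacle is chart compatibility: when two charts overlap---whether both boundary, both interior, or mixed---the resulting local limits of $\var_n$ and $u_n$ must agree on the overlap. For each finite $n$, compatibility is automatic because $\var_n$ and $u_n$ are genuine functions on $\partial \Omega_n$ and $\Omega_n$, so any two valid chart representations already match. This structural consistency descends to the limit through the locally uniform $\m{C}^{2,\al}$ convergence of the charts themselves (guaranteed by Proposition~\ref{prop:compactness}) and of the pullbacks. Carrying out the bookkeeping---tracking the transition maps between overlapping charts, confirming that the radii $r/16$ and $r/2$ leave enough slack, and verifying uniform $\m{C}^{2,\gamma}$-control of $\var_\infty$ and $u_\infty$ from the Arzel\`a--Ascoli bounds---is the most technical portion, though the remark after Definition~\ref{def:limit} signals that these details are standard.
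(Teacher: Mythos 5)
Your proposal is correct and follows exactly the route the paper sketches: apply Proposition~\ref{prop:compactness} to $(\Omega_n)_n$, then use chart-wise Arzel\`a--Ascoli with a diagonal argument to extract convergent subsequences of $(\var_n)_n$ and $(u_n)_n$. The paper omits the bookkeeping you spell out (pullbacks through the graph charts, chart compatibility, diagonalization over a countable cover), but there is no substantive difference in method.
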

\noindent
This follows from Proposition~\ref{prop:compactness} for $(\Omega_n)_{n \in \N}$ and Arzel\`a--Ascoli for $(\var_n, u_n)_{n \in \N}$; we omit the details.
We use Corollary~\ref{cor:compactness} incessantly throughout the paper, often without explicit reference.
\begin{proof}[Proof of Proposition~\textup{\ref{prop:compactness}}]
  Let $\big(B^{(m)}\big)_{m \in \N}$ be an open cover of $\R^d$ by balls of radius $r/8$ such that every ball of radius $r/16$ is contained in some $B^{(m)}$.
  Let $x^{(m)}$ denote the center of each ball.
  We will construct nested subsequences
  \begin{equation*}
    (\Omega_n)_n \supset \ldots \supset (\Omega_{m,n})_n \supset (\Omega_{m + 1, n})_n \supset \ldots
  \end{equation*}
  indexed by $m \in \N$.
  The subsequence $(\Omega_{m, n})_n$ will determine the limit $\Omega_\infty$ within the ball $B^{(m)}$.
  Our final subsequence $(\Omega_{n_m})_m$ will be diagonal: $(\Omega_{n,n})_n$.
  As a subsequence of each $(\Omega_{m,n})_n$, it will determine a limit $\Omega_\infty$ consistently within each ball $B^{(m)}$, and thus on the whole space $\R^d$.

  We proceed inductively.
  Given $m \in \N$, suppose we have constructed $(\Omega_{m - 1, n})_n$.
  (When $m = 1$, we adopt the convention that $\Omega_{0, n} = \Omega_n$.)
  We work by cases corresponding to those in Definition~\ref{def:limit}.
  \begin{enumerate}[label = (\roman*), leftmargin=20pt, itemsep = 1ex]
  \item
    \label{item:compactness-subset}
    If $B^{(m)} \subset \Omega_{m - 1, n}$ for infinitely many $n$, let $(\Omega_{m, n})_n$ be the corresponding subsequence and let $\Omega_\infty \cap B^{(m)} = B^{(m)}$.

  \item
    \label{item:compactness-disjoint}
    Otherwise, if $B^{(m)} \subset \Omega_{m - 1, n}^c$ for infinitely many $n$, let $(\Omega_{m, n})_n$ be the corresponding subsequence and let $\Omega_\infty \cap B^{(m)} = \emptyset$.

  \item
    \label{item:compactness-boundary}
    If neither \ref{item:compactness-subset} nor \ref{item:compactness-disjoint} applies, then $\partial\Omega_{m-1,n}$ intersects $B^{(m)} = B_{r/8}\big(x^{(m)}\big)$ for all sufficiently large $n$.
    Dropping the early entries, we may assume without loss of generality that this holds for all $n$.
    We apply Definition~\ref{def:smooth} to the $(\gamma, r, K)$-smooth sets $\Omega_{m-1,n}$ around $x^{(m)} \in \R^d$.
    Since neither \ref{item:smooth-subset} nor \ref{item:smooth-disjoint} holds, \ref{item:smooth-boundary} yields $g_n \in \op{Isom}_0(\R^d)$ and ${\phi_n \in \m{C}^{2,\gamma}\big(B_r^{d-1}; [-r/4, r/4]\big)}$ such that $g_n(x^{(m)}) = 0$, ${\norm{\phi_n}_{\m{C}^{2,\gamma}(B_r^{d - 1})} \leq K}$, and $g_n \Omega_{m-1,n} \cap \Gamma_r = \Pi_r(\phi_n)$ for all $n \in \N$.

    Now, the set of orientation-preserving isometries sending $x^{(m)}$ to $0$ is homeomorphic to $\mathrm{SO}(d)$, and thus compact.
    It follows that we can extract a subsequential limit $g_\infty$ of $(g_n)_n$.
    Likewise, Arzel\`a--Ascoli allows us to extract a further subsequence of $(\phi_n)_n$ that converges in $\m{C}^{2,\al}(B_r^{d - 1})$ to some limit ${\phi_\infty \in \m{C}^{2, \gamma}\big(B_r^{d-1}; [-r/4, r/4]\big)}$.
    We let $(\Omega_{m,n})_n$ be the corresponding subsequence of $(\Omega_{m-1,n})_n$.
    We then set
    \begin{equation*}
      \Omega_\infty \cap B^{(m)} = \big(g_\infty^{-1}\Pi_r(\phi_\infty)\big) \cap B^{(m)}.
    \end{equation*}
  \end{enumerate}
  \medskip

  We have now constructed $(\Omega_{m,n})_n$.
  Iterating, we obtain our entire sequence of subsequences.
  Because the subsequences are nested, our construction of the limit $\Omega_\infty$ is consistent across different balls $B^{(m)}$.
  It is straightforward to check that $\Omega_\infty$ is uniformly $(\gamma,r,K)$-smooth.

  It remains to show that the diagonal sequence $(\Omega_{n,n})_n$ converges to $\Omega_\infty$.
  Fix $x \in \R^d$ and suppose that neither \ref{item:limit-subset} nor \ref{item:limit-disjoint} holds when we replace the sequence $(\Omega_n)_n$ by $(\Omega_{n,n})_n$.
  By our choice of the cover $\big(B^{(m)}\big)_{m \in \N}$, the ball $B_{r/16}(x)$ is contained in some $B^{(m)}$.
  We constructed $\Omega_\infty \cap B^{(m)}$ in Stage~$m$ of the above iteration, and the failure of \ref{item:limit-subset} and \ref{item:limit-disjoint} implies that neither \ref{item:compactness-subset} and \ref{item:compactness-disjoint} held in that stage.
  Thus we used \ref{item:compactness-boundary} to construct $\Omega_\infty \cap B^{(m)}$.

  Let $g_n^{(m)}$ and $\phi_n^{(m)}$ denote the corresponding isometries and height functions.
  Set $g \coloneqq g_\infty^{(m)}$ and $\phi \coloneqq \phi_\infty^{(m)}$.
  The rotations $g \big(g_n^{(m)}\big)^{-1}$ converge to the identity as $n \to \infty$.
  Thus when $n$ is sufficiently large, the graph of $\phi_n^{(m)}$ over $B_r^{d-1}$ is rotated by $g \big(g_n^{(m)}\big)^{-1}$ to the graph of a function $\phi_n$ over $B_{r/2}^{d-1}$.
  It follows that $g\Omega_{m,n} \cap \Gamma_{r/2} = \Pi_{r/2}(\phi_n)$.

  Moreover, because $\big|\phi_n^{(m)}\big| \leq r/4$, we have $\abs{\phi_n} \leq r/2$ once $n$ is large.
  By construction, $\phi_n^{(m)} \to \phi_\infty^{(m)}$ in $\m{C}^{2,\al}(B_r^{d-1})$ as $n \to \infty$.
  It is thus easy to check that $\phi_n \to \phi$ in $\m{C}^{2,\al}(B_{r/2}^{d-1})$.
  Since $(\Omega_{n,n})_n$ is a subsequence of $(\Omega_{m, n})_n$, \ref{item:limit-boundary} holds and indeed $\Omega_{n,n} \to \Omega_\infty$ locally uniformly in $\m{C}^{2,\al}$ as $n \to \infty$.
\end{proof}

\printbibliography
\end{document}